\documentclass[12pt,oneside,english]{amsart}
\usepackage{lmodern}

\usepackage[T1]{fontenc}
\usepackage[latin9]{inputenc}
\usepackage{geometry}
\geometry{verbose,tmargin=35mm,bmargin=35mm,lmargin=26.3mm,rmargin=26.3mm}
\usepackage{babel}
\usepackage{amstext}
\usepackage{amsthm}
\usepackage{amssymb}
\usepackage{setspace}
\usepackage{microtype}
\onehalfspacing
\usepackage[unicode=true,pdfusetitle,
 bookmarks=true,bookmarksnumbered=false,bookmarksopen=false,
 breaklinks=false,pdfborder={0 0 1},backref=false,colorlinks=false]
 {hyperref}

\usepackage[nameinlink,capitalize]{cleveref}

\makeatletter

%%%%%%%%%%%%%%%%%%%%%%%%%%%%%% LyX specific LaTeX commands.

%%%%%%%%%%%%%%%%%%%%%%%%%%%%%% Textclass specific LaTeX commands.
\numberwithin{equation}{section}
\numberwithin{figure}{section}
\theoremstyle{plain}
\newtheorem*{question*}{\protect\questionname}
\theoremstyle{plain}
\newtheorem{thm}{\protect\theoremname}[section]
\theoremstyle{plain}
\newtheorem{fact}[thm]{\protect\factname}
\theoremstyle{plain}
\newtheorem{cor}[thm]{\protect\corollaryname}
\theoremstyle{plain}
\newtheorem{conjecture}[thm]{\protect\conjecturename}
\theoremstyle{definition}
\newtheorem{defn}[thm]{\protect\definitionname}
\theoremstyle{remark}
\newtheorem{rem}[thm]{\protect\remarkname}
\theoremstyle{plain}
\newtheorem{lem}[thm]{\protect\lemmaname}
\theoremstyle{plain}
\newtheorem{prop}[thm]{\protect\propositionname}
\theoremstyle{plain}
\newtheorem{obs}[thm]{Observation}

\@ifundefined{date}{}{\date{}}
%%%%%%%%%%%%%%%%%%%%%%%%%%%%%% User specified LaTeX commands.
\usepackage{amsmath}

% Necessary Commands
\usepackage{autobreak}

% Set space between words to be wider:
\spaceskip=1.3\fontdimen2\font plus 1\fontdimen3\font minus 1.5\fontdimen4\font

% Convert the Lyx colors into more pleasent colors
\usepackage{xcolor}
\definecolor{blue}{RGB}{14,107,217}
\definecolor{green}{RGB}{0,158,40}
\definecolor{red}{RGB}{235,16,16}
\definecolor{brown}{RGB}{164,66,0}
\definecolor{orange}{RGB}{231,135,26}
\definecolor{purple}{RGB}{94,53,177}

% Convert the QED Symbol at the end of proofs to a solid black square (credit: Yakir Oz)
\usepackage{amssymb}

\makeatother

\providecommand{\conjecturename}{Conjecture}
\providecommand{\corollaryname}{Corollary}
\providecommand{\definitionname}{Definition}
\providecommand{\factname}{Fact}
\providecommand{\lemmaname}{Lemma}
\providecommand{\propositionname}{Proposition}
\providecommand{\questionname}{Question}
\providecommand{\remarkname}{Remark}
\providecommand{\theoremname}{Theorem}

\begin{document}
\global\long\def\N{\mathbb{N}}%
\global\long\def\Z{\mathbb{Z}}%
\global\long\def\Q{\mathbb{Q}}%
\global\long\def\R{\mathbb{R}}%
\global\long\def\C{\mathbb{C}}%
 
\global\long\def\cM{\mathcal{\mathcal{M}}}%
\global\long\def\cN{\mathcal{N}}%
\global\long\def\cZ{\mathcal{Z}}%
\global\long\def\cQ{\mathcal{Q}}%
 
\title{On dp-minimal expansions of the integers}
\author{Eran Alouf}
\address{Einstein Institute of Mathematics, Hebrew University of Jerusalem,
91904, Jerusalem Israel.}
\email{Eran.Alouf@mail.huji.ac.il}
\begin{abstract}
We show that if $\mathcal{Z}$ is a dp-minimal expansion of $\left(\mathbb{Z},+,0,1\right)$
that defines an infinite subset of $\mathbb{N}$, then $\mathcal{Z}$
is interdefinable with $\left(\mathbb{Z},+,0,1,<\right)$. As a corollary,
we show the same for dp-minimal expansions of $\left(\mathbb{Z},+,0,1\right)$
which do not eliminate $\exists^{\infty}$.
\end{abstract}

\maketitle

\section{Introduction}

For two structures $\cM_{1}$,$\cM_{2}$ with the same underlying
universe $M$, we say that $\cM_{1}$ is a \emph{reduct} of $\cM_{2}$,
and that $\cM_{2}$ is an \emph{expansion} of $\cM_{1}$, if for every
$k\in\N$, every subset of $M^{k}$ which is definable in $\cM_{1}$
is also definable (with parameters) in $\cM_{2}$. We say that $\cM_{1}$
and $\cM_{2}$ are \emph{interdefinable} if $\cM_{1}$ is a reduct
of $\cM_{2}$ and $\cM_{2}$ is a reduct of $\cM_{1}$, and we say
that $\cM_{2}$ is a \emph{proper expansion} of $\cM_{1}$ if $\cM_{2}$
is an expansion of $\cM_{1}$ but $\cM_{1}$ and $\cM_{2}$ are not
interdefinable.

The dp-rank of a theory is a notion of rank that measures how much
a theory is NIP. Having dp-rank infinity (equivalently, at least $\left\vert T\right\vert ^{+}$)
is equivalent to having IP. The dp-minimal theories --- theories
with dp-rank $1$ --- are the simplest among the NIP theories, yet
the notion of dp-minimality still generalizes other notions of minimal
NIP theories, such as o-minimality and C-minimality. Often a general
study of NIP would start with studying the dp-minimal case. For example,
UDTFS was first shown for dp-minimal theories \cite{Gui12}, and later
generalized to all NIP theories \cite{CS15} (a local version was
recently proved in \cite{EK19}). Closer to our interest is the classification
of dp-minimal fields \cite{Joh15}, which was recently generalized
in \cite{Joh20} to dp-finite fields (i.e., fields of finite dp-rank).

In many cases, dp-minimality has strong global consequences for definable
sets. For example, in \cite{Sim11} it was shown that if $G$ is a
definably complete expansion of a divisible ordered group, then $G$
is dp-minimal if and only if it is o-minimal. In particular, an expansion
of $\left(\R,+,0,<\right)$ is dp-minimal if and only if it is o-minimal.
Similar results for other structures were proved in \cite{SW19}.
In \cite{SW19a}, general results were proved about definable sets
in dp-minimal structures equipped with definable uniformities satisfying
certain assumptions. These results generalize previous results about
weakly o-minimal, C-minimal and P-minimal theories. 

In this paper, we give a partial answer to the following question:
\begin{question*}
What are the dp-minimal expansions of $\left(\Z,+,0,1\right)$?
\end{question*}
Until recently, $\left(\Z,+,0,1,<\right)$ was the only known expansion
of $\left(\Z,+,0,1\right)$ of finite dp-rank. A number of results
were proved about the inexistence of such expansions. In \cite{Aschenbrenner_et_al_I_2015}
it was shown that $\left(\Z,+,0,1,<\right)$ has no proper dp-minimal
expansions. This was later significantly strengthened in \cite{DolichGoodrick2017}
by the following:
\begin{fact}[{\cite[Corollary 2.20]{DolichGoodrick2017}}]
\label{fact:no_strongly_dependent_expansions_of_the_order}Suppose
that $\cZ$ is a strong expansion of $\left(\mathbb{Z},+,0,1,<\right)$.
Then $\mathcal{Z}$ is interdefinable with $\left(\mathbb{Z},+,0,1,<\right)$.
\end{fact}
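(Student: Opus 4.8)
The plan is to prove the stronger statement that $\cZ$ defines no new sets at all: since $\cZ$ expands $(\Z,+,0,1,<)$, interdefinability is equivalent to showing that every $\cZ$-definable subset of every $\Z^{n}$ is already definable in $(\Z,+,0,1,<)$. I would first reduce to the unary case. Presburger arithmetic eliminates quantifiers once one adjoins the divisibility predicates $D_{n}(x)\equiv\exists y\,(x=ny)$, and in this language the definable subsets of $\Z$ are exactly the finite Boolean combinations of congruence classes $a+n\Z$ and rays $\{x:x>b\}$; equivalently, they are the sets whose characteristic sequence is ultimately periodic towards $+\infty$ and towards $-\infty$. Thus the reduct-definable unary sets are completely transparent, and the entire problem is to show that $\cZ$ defines no others (and then to propagate this through the arities).

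The heart of the argument is the unary case, where I would use ``strong'' in its combinatorial form: a strongly dependent structure admits no inp-pattern of infinite depth, i.e.\ no family of formulas $\varphi_{i}(x;y_{i})$ in a single variable $x$ together with parameter arrays $(a_{i,j})_{j<\omega}$ such that each row $\{\varphi_{i}(x;a_{i,j}):j<\omega\}$ is $k_{i}$-inconsistent while every vertical path $\{\varphi_{i}(x;a_{i,\eta(i)}):i<\omega\}$ is consistent. Suppose for contradiction that $X\subseteq\Z$ is $\cZ$-definable but not Presburger-definable. Then the characteristic sequence of $X$ fails to be ultimately periodic, and the goal is to convert this failure into such an infinite inp-pattern. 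The order and the group translations are the essential tools here: because $X$ is genuinely aperiodic one can localise at infinitely many independent ``scales'', and at scale $i$ use the translates $X+c$ together with interval conditions to build a row that is boundedly inconsistent yet leaves every choice of path consistent (the paths being realised in a saturated model by compactness). Assembling these rows over $i<\omega$ contradicts strong dependence, so every $\cZ$-definable subset of $\Z$ is Presburger-definable. Discreteness of the order (the presence of $1$, hence of a definable successor) and the finiteness of each $\Z/n\Z$ are what guarantee that the only congruence data available is the Presburger one.

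To pass from unary sets to arbitrary definable sets I would invoke the structure theory of strong ordered abelian groups applied to $\cZ$, which is itself such a structure, being a strong expansion of $(\Z,+,0,1,<)$: definable functions $\Z^{n}\to\Z$ are then forced to be piecewise Presburger-linear and definable families of unary sets to be Presburger-definable families, after which an induction on arity (peeling off one ordered variable at a time and using the relative quantifier elimination of the reduct) yields that every definable set lies in $(\Z,+,0,1,<)$. I expect the main obstacle to be precisely the construction of the previous paragraph: turning the qualitative statement ``$X$ is not ultimately periodic'' into an \emph{explicit} infinite inp-pattern, with the right choice of formulas $\varphi_{i}$ (most plausibly built from the conditions $x\in X+c$ together with order inequalities) and a verification that each row is uniformly inconsistent while all paths remain consistent. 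The bootstrapping from the unary case to full interdefinability is the second delicate point, since it is there that one must exclude new definable \emph{relations} all of whose unary sections are already tame.
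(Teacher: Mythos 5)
This statement is quoted in the paper as an external result (Corollary 2.20 of Dolich--Goodrick); the paper contains no proof of it, so there is nothing internal to compare your argument against. Judged on its own, what you have written is a plan rather than a proof: the two steps that you yourself flag as ``the main obstacle'' and ``the second delicate point'' are precisely the mathematical content of the theorem, and neither is carried out.

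Concretely, there are two genuine gaps. First, the central claim --- that a $\mathcal{Z}$-definable $X\subseteq\Z$ whose characteristic sequence is not ultimately periodic yields an inp-pattern of infinite depth --- is asserted but never constructed. You do not specify the formulas $\varphi_{i}$, why each row is $k_{i}$-inconsistent, or why every path is consistent; ``localise at infinitely many independent scales'' is not an argument, and it is far from clear that a single failure of eventual periodicity supplies $\omega$ many \emph{mutually independent} rows (one qualitative defect does not obviously split into infinitely many independent inconsistency phenomena). This is exactly where the published proof does real work, via structure results on definable cuts and unary definable sets in strong ordered abelian groups, rather than a direct pattern-building argument. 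Second, the passage from ``every unary definable set is Presburger-definable'' to full interdefinability is handled by invoking ``the structure theory of strong ordered abelian groups,'' which is essentially citing a consequence of the theorem being proved. There is no general principle that tameness of unary sets forces tameness in all arities, and the induction on arity you sketch would need the piecewise-linearity of definable functions that is itself in question; in the literature this reduction is a genuine theorem (due to Michaux and Villemaire: any expansion of $\left(\Z,+,0,1,<\right)$ with a new definable set of any arity already has a new \emph{unary} definable set), proved by an automata-theoretic/Cobham--Semenov-style argument, not by peeling off variables. Until both of these steps are supplied, the proposal identifies a plausible strategy but does not constitute a proof.
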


Since every strongly-dependent theory is strong, this fact implies
that every proper expansion of $\left(\mathbb{Z},+,0,1,<\right)$
has $\mbox{dp-rank}\ge\omega$. In \cite{Conant2018_no_intermediate}
the following was proved:
\begin{fact}[\cite{Conant2018_no_intermediate}]
\label{fact:no_intermediate_structures_between_the_group_and_the_order}Suppose
that $\cZ$ is an expansion of $\left(\mathbb{Z},+,0,1\right)$ and
a reduct of $\left(\mathbb{Z},+,0,1,<\right)$. Then $\mathcal{Z}$
is interdefinable with either $\left(\mathbb{Z},+,0,1\right)$ or
$\left(\Z,+,0,1,<\right)$.
\end{fact}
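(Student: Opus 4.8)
The plan is to exploit that $\cZ$, being a reduct of Presburger arithmetic $(\Z,+,0,1,<)$, inherits quantifier elimination: every $\cZ$-definable subset of $\Z^{n}$ is, as a set definable in $(\Z,+,0,1,<)$, a finite Boolean combination of \emph{congruence conditions} $\sum a_{i}x_{i}\equiv c\pmod m$ and \emph{inequalities} $\sum a_{i}x_{i}\ge c$. Both congruences and linear equations are already definable in $(\Z,+,0,1)$, so the whole question reduces to a clean dichotomy. I will show that either every $\cZ$-definable set can be written without any essential inequality, in which case $\cZ$ has exactly the definable sets of $(\Z,+,0,1)$ and is interdefinable with it, or else $\cZ$ defines a set that genuinely requires an inequality, in which case I will recover the full order $<$. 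Since $\cZ$ is by hypothesis a reduct of $(\Z,+,0,1,<)$, defining $<$ immediately yields interdefinability with $(\Z,+,0,1,<)$, so these two horns are exactly the stated conclusion.

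So suppose $X\subseteq\Z^{n}$ is $\cZ$-definable but not definable in $(\Z,+,0,1)$. First I would pin down the characterization of group-definable sets: using the eventual periodicity forced by quantifier elimination, a Presburger set is definable in $(\Z,+,0,1)$ precisely when, after fixing a common modulus $m$, its intersection with each residue class is finite or cofinite in that class (equivalently, it is a finite Boolean combination of cosets). Hence non-definability of $X$ must be witnessed by an inequality $L(\bar x)\ge c$ that cannot be absorbed, where $L=\sum a_{i}x_{i}$ is a nonzero linear form. The next step is to pass to one variable: fixing all but one coordinate and choosing the remaining direction so that $L$ stays non-constant, I obtain a $\cZ$-definable section $S\subseteq\Z$, whose parameters are specific integers and hence $\emptyset$-definable, on which the surviving inequality makes $S$ not group-definable. \textbf{This reduction to one variable is the step I expect to be the main obstacle:} I must rule out that the inequality is masked in every one-dimensional section by the remaining Boolean combination, which requires a genericity argument showing that the $L$-inequality persists for an infinite family of sections.

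Granting a non-group-definable $S\subseteq\Z$, the one-variable endgame is robust. By quantifier elimination $S$ is eventually periodic at $+\infty$ and at $-\infty$, and non-group-definability means that for some residue $r$ the set $S_{r}=\{k:r+mk\in S\}$ is infinite and co-infinite. Reindexing the coset $r+m\Z$ by the group-definable bijection $k\mapsto r+mk$, I get a $\cZ$-definable $T\subseteq\Z$ that is a finite union of intervals, infinite and co-infinite; replacing $T$ by its complement if necessary, $T$ is eventually empty to the left and eventually full to the right, so $T$ differs from a genuine ray $[c,\infty)$ by a finite set $D$. The crucial observation is that \emph{every finite set of integers is $\emptyset$-definable in $(\Z,+,0,1)$}, since each integer is a closed term; therefore $D$ is $\cZ$-definable and $[c,\infty)=T\mathbin{\triangle}D$ is $\cZ$-definable exactly, whence $\N=[c,\infty)+(-c)$ is too. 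Once $\N$ is $\cZ$-definable, $x<y\iff y-x-1\in\N$ defines the order, completing the second horn.

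In summary, the argument is: quantifier elimination to describe all definable sets; a characterization of the group-definable ones as the inequality-free sets; reduction of any genuinely new set to a non-group-definable subset of $\Z$; and extraction of a ray, hence $\N$, hence $<$. The routine-but-delicate bookkeeping lies in the quantifier-elimination normal forms and in the persistence of the essential inequality under sectioning, while the conceptually load-bearing point, that finite ambiguities are automatically definable because every integer is named, is exactly what lets the final step produce the \emph{exact} order rather than an order-up-to-finite-error.
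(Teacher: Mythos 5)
First, a point of calibration: the paper does not prove this statement at all --- it is quoted as a black box from Conant's article, where it is the main theorem and its proof occupies essentially the entire paper. So your argument has to stand on its own, and it does not yet: the step you yourself flag as ``the main obstacle'' (reducing a non-group-definable $X\subseteq\Z^{n}$ to a non-group-definable one-variable section) is exactly where all of the difficulty of Conant's theorem is concentrated, and you leave it as a hope rather than a proof. Two concrete problems. First, your criterion for group-definability (``finite or cofinite in each residue class'', equivalently ``an inequality that cannot be absorbed'') is only correct in one variable. In $\Z^{2}$ the strip $\{(x,y)\,:\,0\le x-y\le5\}$ is written with seemingly essential inequalities yet is group-definable (it is a finite union of cosets of the diagonal subgroup, each defined by $x=y+c$), while the diagonal itself is group-definable but neither finite nor cofinite in any residue class; so ``has an unabsorbable inequality'' cannot be read off a formula, and detecting it is the real content. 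Second, sections obtained by ``fixing all but one coordinate'' demonstrably do not suffice: the set $X=\{(x,y)\,:\,(x+y\ge0)\leftrightarrow(x-y\ge0)\}$ is not group-definable, but every section $X\cap(\Z\times\{b\})$ is cofinite and every section $X\cap(\{a\}\times\Z)$ is finite. You would have to pull back along arbitrary lines $t\mapsto a+tv$ (here $v=(1,1)$ through the origin yields the ray $\{t\,:\,t\ge0\}$), and then prove that for every non-group-definable Presburger set some line pullback has different eventual periodic behaviour at $+\infty$ and $-\infty$. That genericity statement is plausible, but it is a genuine theorem requiring a careful induction through the Presburger cell decomposition, not routine bookkeeping.

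By contrast, your one-variable endgame is correct and complete: eventual periodicity, a residue class $r+m\Z$ whose two ends disagree, reindexing by the definable bijection $k\mapsto r+mk$, and the observation that the finite discrepancy $D$ is $\emptyset$-definable because every integer is a closed term, so the exact ray, hence $\N$, hence $<$, is definable. As submitted, though, the proof has a genuine gap at the reduction step, and that gap is the theorem.
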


Together, this means that any dp-minimal (or even strong) expansion
of $\left(\mathbb{Z},+,0,1\right)$ which is not interdefinable with
$\left(\mathbb{Z},+,0,1,<\right)$, cannot define any set that is
definable in $\left(\mathbb{Z},+,0,1,<\right)$ but not in $\left(\mathbb{Z},+,0,1\right)$.
In \cite{ConantPillay2018} it was shown that $\left(\Z,+,0,1\right)$
has no proper stable expansions of finite dp-rank. In contrast, there
are superstable expansions of $\left(\mathbb{Z},+,0,1\right)$ which
are not dp-finite \cite{Conant2019_stability_sparsity} \cite{LambottePoint2017}.
For example, the structure $\left(\Z,+,0,1,\Pi_{q}\right)$, where
$2\le q\in\N$ and $\Pi_{q}:=\left\{ q^{n}\,:\,n\in\N\right\} $,
is superstable (hence strongly-dependent) of $U$-rank $\omega$.
This was shown independently in \cite{Poizat_2014} and \cite{PS18},
and the stability of this structure was also shown in \cite{MS04}.
By the result of \cite{ConantPillay2018} mentioned above, this structure
is not dp-finite.

There are simple unstable expansions of $\left(\Z,+,0,1\right)$ of
finite inp-rank. In \cite{KS16} it was shown that, under Dickson's
conjecture about the distribution of primes in the natural numbers,
the structure $\left(\Z,+,0,1,\text{Pr}\right)$ is supersimple of
$U$-rank $1$, where $\text{Pr}$ is the set of primes and their
negations. In \cite{BT17} it was shown (without the assumption of
any conjecture) that the structure $\left(\Z,+,0,1,\text{Sqf}\right)$
is supersimple of $U$-rank $1$, where $\text{Sqf}:=\left\{ a\in\Z\,:\,\text{for all primes }p\text{, }p^{2}\nmid a\right\} $.
Since $U$-rank $1$ implies inp-minimality, both $\left(\Z,+,0,1,\text{Pr}\right)$
and $\left(\Z,+,0,1,\text{Sqf}\right)$ are inp-minimal.

In \cite[Question 5.32]{Aschenbrenner_et_al_II_2013}, the authors
ask whether every dp-minimal expansion of $\left(\Z,+,0,1\right)$
is a reduct of $\left(\Z,+,0,1,<\right)$ (which, by \cref{fact:no_intermediate_structures_between_the_group_and_the_order},
must be interdefinable with either $\left(\mathbb{Z},+,0,1\right)$
or $\left(\Z,+,0,1,<\right)$). In \cite{AD19}, a new family of dp-minimal
expansions of $\left(\Z,+,0,1\right)$ was introduced, thus giving
a negative answer to this question. More generally, it was shown that
for every nonempty (possibly infinite) set of primes $P\subseteq\mathbb{N}$,
the structure $\left(\Z,+,0,1,\left\{ |_{p}\right\} _{p\in P}\right)$
has dp-rank $\left\vert P\right\vert $, where $a|_{p}b$ is interpreted
as $v_{p}(a)\leq v_{p}(b)$, with $v_{p}$ the $p$-adic valuation
on $\Z$. In particular, for a single prime $p$, the structure $\left(\Z,+,0,1,|_{p}\right)$
is dp-minimal. This was recently generalized in \cite{Cla20}, where
the following was shown: let $\left(B_{i}\right)_{i<\omega}$ be a
strictly descending chain of subgroups of $\Z$, $B_{0}=\Z$, and
let $v:\Z\to\omega\cup\left\{ \infty\right\} $ be the valuation defined
by $v(x):=\max\left\{ i\,:\,x\in B_{i}\right\} $. Then $\left(\Z,+,0,1,v\right)$,
as well as certain expansions of it, are dp-minimal. 

Another new family of dp-minimal expansions of $\left(\Z,+,0,1\right)$
was introduced in \cite{TW17}. They showed that for every $\alpha\in\R\backslash\Q$,
the structure $\left(\Z,+,0,1,C_{\alpha}\right)$ is dp-minimal, where
$C_{\alpha}$ is the ternary relation given by $C_{\alpha}(j,k,l)\iff C(\alpha j+\Z,\alpha k+\Z,\alpha l+\Z)$,
where $C$ is the usual positively oriented cyclic order on $\R/\Z$.
They also showed that $\left(\Z,+,0,1,C_{\alpha}\right)$ and $\left(\Z,+,0,1,C_{\beta}\right)$
are interdefinable if and only if $\alpha$ and $\beta$ are linearly-dependent
over $\Q$. In particular, the family of structures $\left\{ \left(\Z,+,0,1,C_{\alpha}\right)\right\} _{\alpha\in\R\backslash\Q}$
contains a subset of size $2^{\aleph_{0}}$ such that no two distinct
members of this subset are interdefinable.

These new families have something in common that sets them apart from
$\left(\Z,+,0,1,<\right)$: By the quantifier elimination result in
\cite{AD19}, it follows that all the structures of the form $\left(\Z,+,0,1,|_{p}\right)$
(as above) eliminate $\exists^{\infty}$ (in this paper, when we mention
elimination of $\exists^{\infty}$, we always mean elimination of
$\exists^{\infty}$ in the home sort). It also follows that every
subset of $\Z$ which is definable in $\left(\Z,+,0,1,|_{p}\right)$
is a finite union of sets, each of which is either a singleton or
of the form $m+n\Z$. Hence, $\left(\Z,+,0,1,|_{p}\right)$ does not
define an infinite subset of $\N$. Likewise, by \cite[Theorem 3.4]{TW17},
it follows that all the structures of the form $\left(\Z,+,0,1,C_{\alpha}\right)$
(as above) do not define an infinite subset of $\N$. Additionally,
if $G$ is a dp-minimal expansion of a group in which there is a uniformly
definable family of subsets of $G$ which forms a neighborhood basis
at the identity for a non-discrete group topology on $G$, then $G$
eliminates $\exists^{\infty}$. This immediately follows from \cite[Lemma 3.1]{JSW17}.
Since $\left(\Z,+,0,1,C_{\alpha}\right)$ defines such a group topology,
it eliminates $\exists^{\infty}$. Note that $\left(\Z,+,0,1,|_{p}\right)$
also defines such a group topology, so this gives an alternative way
to see that $\left(\Z,+,0,1,|_{p}\right)$ eliminates $\exists^{\infty}$.

We show that this is not a coincidence. We prove the following theorem:
\begin{thm}
\label{thm:new-dp-minimal-infinite-set-of-natural-numbers}Let $\mathcal{Z}$
be a dp-minimal expansion of $\left(\mathbb{Z},+,0,1\right)$, and
suppose that there exists an infinite set $A\subseteq\mathbb{N}$
that is definable in $\mathcal{Z}$. Then $\mathcal{Z}$ is interdefinable
with $\left(\mathbb{Z},+,0,1,<\right)$.
\end{thm}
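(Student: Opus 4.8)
The plan is to reduce everything to defining the order. Observe first that over $\left(\Z,+,0,1\right)$ the order is interdefinable with the predicate for $\N$: we have $x<y\iff y-x\in\N\setminus\left\{ 0\right\}$ and $\N=\left\{ x:0\le x\right\}$, while $\N\setminus\left\{ 0\right\}$ is clearly definable from $\N$. Hence it suffices to show that $\cZ$ defines $\N$. Once this is done, $\cZ$ is an expansion of $\left(\Z,+,0,1,<\right)$, and since $\cZ$ is dp-minimal it is in particular strong, so \factref{no_strongly_dependent_expansions_of_the_order} immediately gives that $\cZ$ is interdefinable with $\left(\Z,+,0,1,<\right)$. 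Thus the entire content of the theorem is the implication: \emph{a dp-minimal expansion of $\left(\Z,+,0,1\right)$ that defines an infinite $A\subseteq\N$ must already define $\N$.}

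The difficulty is that ``$A\subseteq\N$'' is an external fact about the standard model, whereas I must extract from it internal, definable data. The asymmetry to exploit is that $A$ is bounded below but not above; concretely, the family of translates $\left(A+k\right)_{k\in\Z}$ is definable, and for $k<k'$ the set $A+k$ lies ``to the left of'' $A+k'$, so one wants to convert this into a translation-invariant definable linear (pre)order. My strategy would be to first use dp-minimality to tame $A$: dp-minimality bounds the complexity of the definable family $\left(A+k\right)_{k}$ and sharply constrains the one-variable definable sets, and I expect this to force $A$ to be replaceable by a definable infinite $A'\subseteq\N$ whose gaps are bounded, i.e.\ a syndetic set. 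The heuristic is that arbitrarily large gaps in $A$ would furnish a second, independent source of dividing lines on top of the order-like behaviour already present, producing an ict-pattern of depth $2$ and contradicting dp-minimality.

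From a syndetic definable $A'\subseteq\N$ with consecutive gaps bounded by some $g$, recovering $\N$ is routine: every integer above $\min A'$ lies within $g$ of a point of $A'$, so the finite union $\bigcup_{i=0}^{g-1}\left(A'-i\right)$ is an upward-closed set differing from the half-line $\left[\min A',\infty\right)$ only on a finite set, which can be corrected since individual integers (and hence finite sets) are definable as terms; subtracting its least element, which is the unique point of the half-line whose predecessor is not in it, yields exactly $\N$, and \factref{no_strongly_dependent_expansions_of_the_order} then finishes as above. (As a cross-check, note that an infinite subset of $\N$ is never definable in $\left(\Z,+,0,1\right)$ alone, since there the one-variable definable sets are eventually periodic in both directions, so by \factref{no_intermediate_structures_between_the_group_and_the_order} the mere definability of such a set already signals that $\cZ$ is strictly larger than the group.) The main obstacle is precisely the taming step of the second paragraph: turning the external inclusion $A\subseteq\N$ into a definable syndetic set, or more directly into a definable linear order, is exactly where dp-minimality must be used in an essential way, ruling out the sparse, valuation-type behaviour of the dp-minimal structures $\left(\Z,+,0,1,|_{p}\right)$ and $\left(\Z,+,0,1,C_{\alpha}\right)$, which—consistently with the theorem—define no infinite subset of $\N$.
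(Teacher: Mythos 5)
Your outer reductions are fine and match the paper: once a definable syndetic subset of $\N$ is in hand, $\N$ itself is definable (this is exactly \propref{syndetic_defines_the_order}), and then \factref{no_strongly_dependent_expansions_of_the_order} finishes the proof. It is also true, as the paper's case analysis ultimately shows, that under the hypotheses $A$ must be syndetic. But the entire mathematical content of the theorem is the step you leave as a heuristic: deriving a contradiction with dp-minimality when $A$ is \emph{not} syndetic. Your proposed mechanism --- that arbitrarily large gaps ``furnish a second, independent source of dividing lines on top of the order-like behaviour already present,'' yielding an ict-pattern of depth $2$ --- does not work as stated. The order-like behaviour is not available as a definable row (definability of the order is what is being proved), and the translates $\left(A+k\right)_{k}$ by themselves supply only one family; an ict-pattern of depth $2$ needs a second, independent definable family, and producing one is precisely the hard part. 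Sparse sets such as $\left\{ 2^{n}\right\} $ or $\left\{ n!\right\} $ have enormous gaps, and nothing in your sketch rules them out.

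The paper needs three genuinely different arguments according to the combinatorial structure of $A$. (i) If every infinite definable subset of $A$ has bounded two-sided gaps, one recursively builds nested definable sets $A_{n}$ and rapidly growing increments $d_{n}$ witnessing that $y-x\in A$ has IP (\propref{bounded_two_sided_gaps_gives_IP}) --- not a depth-$2$ pattern but outright failure of NIP. (ii) If some definable $A^{\prime}\subseteq A$ has unbounded two-sided gaps and large lower-asymptotic doubling, the K{\"o}vari--S{\'o}s--Tur{\'a}n theorem for hypergraphs extracts large grids $B_{1}+B_{2}$ avoiding the popular sums, which feed into \lemref{ict_pattern_with_bounded_errors_is_enough} to give dp-rank at least $2$ (\propref{unpopular_k_sumset_gives_dp_rank_atleast_k} and \propref{large_k_tupling_gives_dp_rank_atleast_k}). (iii) If $A^{\prime}$ has unbounded gaps and small doubling, the representation function of the sumset is unbounded, so $\exists^{\infty}$ is not eliminated (\lemref{no_large_k_tupling_gives_failure_to_eliminate_exists_infty}), and only then does one obtain the second family of uniformly large finite definable sets needed to cross with the translates of $A^{\prime}$ in \lemref{dp-minimality-prevents-large-two-sided-gaps}. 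Your heuristic corresponds at best to case (iii), and even there the second row must be manufactured from the failure of $\exists^{\infty}$-elimination, which is itself only available after the doubling analysis of case (ii). None of this appears in your proposal, so the argument has a genuine gap at its central step.
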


As a corollary, we also prove:
\begin{thm}
\label{thm:dp-minimal-which-does-not-eliminate-exists-infty}Let $\mathcal{Z}$
be a dp-minimal expansion of $\left(\mathbb{Z},+,0,1\right)$ which
does not eliminate $\exists^{\infty}$. Then $\mathcal{Z}$ is interdefinable
with $\left(\mathbb{Z},+,0,1,<\right)$.
\end{thm}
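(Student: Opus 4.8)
The plan is to reduce this statement to \thmref{new-dp-minimal-infinite-set-of-natural-numbers}. Granting that theorem, it suffices to prove the single implication that a dp-minimal expansion $\cZ$ of $\left(\Z,+,0,1\right)$ which does not eliminate $\exists^{\infty}$ must define an infinite subset of $\N$; feeding the resulting set $A\subseteq\N$ into \thmref{new-dp-minimal-infinite-set-of-natural-numbers} then yields that $\cZ$ is interdefinable with $\left(\Z,+,0,1,<\right)$. The first step is to unwind the hypothesis: the failure of elimination of $\exists^{\infty}$ produces a formula $\varphi\left(x,\bar{y}\right)$ together with parameters $\bar{b}_{n}\in\Z$ such that each fiber $\varphi\left(\Z,\bar{b}_{n}\right)$ is finite while $\left|\varphi\left(\Z,\bar{b}_{n}\right)\right|\to\infty$. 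Thus the genuine content of the corollary is to manufacture, out of a uniformly definable family of finite sets of unbounded size, a single infinite definable set lying inside $\N$.

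To build such a set I would exploit the asymmetry forced by the named constant $1$ together with the tameness coming from dp-minimality. As a guiding example, in $\left(\Z,+,0,1,<\right)$ the failure of elimination of $\exists^{\infty}$ is witnessed by the intervals $\left\{ x:0<x<b\right\} $, which are finite of unbounded size and manifestly bounded below, so their union is already an infinite subset of $\N$. The aim is to recover this interval-like behaviour abstractly: after translating the fibers by elements of themselves so that they meet a fixed coset, one should use dp-minimality to show that the family cannot disperse arbitrarily but instead organizes into a definable linearly ordered configuration. Concretely, passing to a saturated elementary extension, the type $\left\{ \exists^{\ge n}x\,\varphi\left(x,\bar{y}\right):n\in\N\right\} $ is consistent, so there is a parameter $\bar{b}^{*}$ with $\varphi\left(\cdot,\bar{b}^{*}\right)$ infinite; inside such an infinite fiber I would extract an indiscernible sequence, use dp-minimality to pin down a definable order on it, and transfer this back to a genuine infinite subset of $\N$ in the standard model.

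The main obstacle is precisely this manufacturing step: turning a definable family of unbounded finite sets into one infinite set contained in $\N$. The difficulty is that, a priori, no order is available, and the natural constructions one can perform with the group operation alone — difference sets, symmetrizations, unions of translates meeting $0$ — tend to collapse to sets invariant under $x\mapsto-x$ up to finite error, hence unbounded in both directions, so they never land inside $\N$. Breaking this symmetry should require genuinely using dp-minimality, and not merely NIP, to forbid the fibers from spreading out and to extract a one-sided, interval-like structure; this is where the bulk of the work lies. Once an infinite definable $A\subseteq\N$ is in hand, \thmref{new-dp-minimal-infinite-set-of-natural-numbers} applies directly and gives that $\cZ$ is interdefinable with $\left(\Z,+,0,1,<\right)$, completing the proof.
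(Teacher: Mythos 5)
Your reduction target is right in outline, but the proof has a genuine gap at exactly the point you flag as ``where the bulk of the work lies'': you never actually produce the infinite definable subset of $\N$. The sketch you offer for this step --- extract an indiscernible sequence inside an infinite fiber in a saturated extension, ``use dp-minimality to pin down a definable order on it, and transfer this back'' --- is not an argument; it is a restatement of the difficulty. In particular, it is unclear how an order definable on an indiscernible sequence in a monster model would descend to a $\cZ$-definable subset of $\N$ in the standard model, and the paper in fact never proves that $\cZ$ itself defines an infinite subset of $\N$ directly; it sidesteps this.

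The paper's actual route is worth internalizing, because it resolves your obstacle by two moves you do not consider. First, it imports the order \emph{externally}: work in an elementary extension $\left(\cM,<\right)$ of $\left(\cZ,<\right)$, where by compactness some fiber $A:=\phi(\cM,b)$ is infinite but bounded above and below (\remref{remarks_on_adding_the_order}). The key use of dp-minimality is \lemref{dp-minimality-prevents-large-two-sided-gaps}: only finitely many $a\in A$ have $A\cap\left(a+\Z\right)$ finite (proved via an ict-pattern of depth $2$ built from translates of $A$ and translates of a large finite fiber $\phi(\Z,b_{n})$). Removing those finitely many points and taking the minimum of what remains yields $a\in A$ with $A\cap\left(a+\Z\right)$ infinite and bounded below by $a$ (\corref{get-definable-set-infinite-bounded-below}), so $A^{\prime}:=\left(A-a\right)\cap\Z\subseteq\N$ is infinite. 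Second --- and this is the step your plan cannot avoid --- $A^{\prime}$ is only \emph{externally} definable in $\cZ$ (its parameters live in $\cM$), so one must pass to the Shelah expansion $\cZ^{Sh}$, which is still dp-minimal by \corref{shelah_expansion_preserves_dp_rank}, and apply \thmref{new-dp-minimal-infinite-set-of-natural-numbers} to $\cZ^{Sh}$ rather than to $\cZ$. This only gives that $\cZ$ is a reduct of $\left(\Z,+,0,1,<\right)$, and one then needs \factref{no_intermediate_structures_between_the_group_and_the_order} together with the failure of elimination of $\exists^{\infty}$ to rule out interdefinability with $\left(\Z,+,0,1\right)$ and conclude. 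Your proposal, by insisting on an internally definable subset of $\N$, both leaves the central construction unproved and misses the need for the Shelah-expansion detour and the final appeal to Conant's theorem.
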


Let $\cZ$ be an unstable expansion of $\left(\mathbb{Z},+,0,1\right)$,
and let $\cM$ be a highly saturated elementary extension of $\cZ$
with domain $M$. Then there is a unary subset of $M$ which is definable
in $\cM$ but not definable in $\left(M,+,0,1\right)$. This follows
from the fact that instability is witnessed by a formula $\phi(x,y)$
over $\emptyset$ with $\left\vert x\right\vert =1$ (see \cite[Lemma 5.13]{AD19}).
In contrast, we have the following corollary:
\begin{cor}
\label{cor:cor_of_both_thms_together}Let $\mathcal{Z}$ be a dp-minimal
expansion of $\left(\mathbb{Z},+,0,1\right)$, and let $\left(\cM,<\right)$
be an elementary extension of $\left(\cZ,<\right)$ with domain $M$.
Let $N:=\left\{ a\in M\,:\,a\ge0\right\} $. If $\cM$ defines an
infinite subset of $N$, then $\mathcal{Z}$ is interdefinable with
$\left(\mathbb{Z},+,0,1,<\right)$.
\end{cor}

\begin{proof}
Let $L$ be the language of $\cZ$, and let $L_{<}:=L\cup\left\{ <\right\} $.
Suppose that there exist an $L$-formula $\phi(x,y)$ and $b\in M$
such that $\phi(M,b)$ is an infinite subset of $N$.

For each $n\in\N$, let $\psi_{n}(y)$ be the formula $\exists^{\ge n}x\phi(x,y)$.
So $\cM\models\psi_{n}(b)$ for all $n$. Let $\theta(y)$ be the
$L_{<}$-formula $\forall x(\phi(x,y)\rightarrow x\ge0)$. So $\left(\cM,<\right)\models\theta(b)$.
By elementarity, for each $n$ there exists $c_{n}\in\Z$ such that
$\left(\cZ,<\right)\models\psi_{n}(c_{n})\wedge\theta(c_{n})$, so
$\phi(\Z,c_{n})$ is a subset of $\N$ of size at least $n$.

If for some $n$ the set $\phi(\Z,c_{n})$ is infinite, then by \cref{thm:new-dp-minimal-infinite-set-of-natural-numbers},
$\mathcal{Z}$ is interdefinable with $\left(\mathbb{Z},+,0,1,<\right)$.
Otherwise, $\mathcal{Z}$ does not eliminate $\exists^{\infty}$,
hence by \cref{thm:dp-minimal-which-does-not-eliminate-exists-infty},
$\mathcal{Z}$ is interdefinable with $\left(\mathbb{Z},+,0,1,<\right)$.
\end{proof}
It is interesting to ask what can be said for general finitely generated
abelian groups. \cref{subsec:The-case-of-large-doubling}
is already formulated in this generality, but it is not even clear
what should replace \cref{thm:new-dp-minimal-infinite-set-of-natural-numbers}
and \cref{thm:dp-minimal-which-does-not-eliminate-exists-infty}.

For example, for $d\ge3$, the group $\left(\Z^{d},+,0\right)$ has
$2^{\aleph_{0}}$ many dp-minimal expansions that do not eliminate
$\exists^{\infty}$. For simplicity, consider $d=3$. By \cite[Proposition 5.1]{JSW17},
for every group order $\triangleleft$ on $\left(\Z^{3},+,0\right)$,
the structure $\left(\Z^{3},+,0,\triangleleft\right)$ is dp-minimal.
For each $\alpha\in\R\backslash\Q$, let $<_{\alpha}$ be the order
on $\Z^{2}$ defined by $\left(x_{1},x_{2}\right)<_{\alpha}\left(y_{1},y_{2}\right)\iff x_{1}+\alpha x_{2}<y_{1}+\alpha y_{2}$.
Let $\cZ_{\alpha}:=\left(\Z^{3},+,0,\triangleleft_{\alpha}\right)$
be the lexicographic product of $\left(\Z^{2},+,0,<_{\alpha}\right)$
and $\left(\mathbb{Z},+,0,<\right)$, that is, $\left(x_{1},x_{2},x_{3}\right)\triangleleft_{\alpha}\left(y_{1},y_{2},y_{3}\right)$
if and only if $\left(x_{1},x_{2}\right)<_{\alpha}\left(y_{1},y_{2}\right)$
or $\left(x_{1},x_{2}\right)=\left(y_{1},y_{2}\right)$ and $x_{3}<y_{3}$.
So $\cZ_{\alpha}$ is a dp-minimal expansion of $\left(\Z^{3},+,0\right)$
which does not eliminate $\exists^{\infty}$. For each $\alpha\in\R\backslash\Q$,
since $\cZ_{\alpha}$ is a countable structure in a countable language,
there are at most countably many $\beta\in\R\backslash\Q$ for which
$\triangleleft_{\beta}$ is definable in $\cZ_{\alpha}$. Therefore,
the family of structures $\left\{ \cZ_{\alpha}\right\} _{\alpha\in\R\backslash\Q}$
contains a subset of size $2^{\aleph_{0}}$ such that no two distinct
members of this subset are interdefinable.

So what may be true for all finitely generated abelian groups? A natural
conjecture is the following:
\begin{conjecture}
\label{conj:conj_on_fin_gen_abelian_groups}Let $\left(G,+,0\right)$
be a finitely generated abelian group, and let $\mathcal{G}$ be a
dp-minimal expansion of $\left(G,+,0\right)$ which does not eliminate
$\exists^{\infty}$. Then there is a definable subgroup $H\le G$
and there is a definable group order $\triangleleft$ on $H$ such
that $\left(H,+,0,\triangleleft\right)\cong\left(\mathbb{Z},+,0,<\right)$.
\end{conjecture}

Consider the structure $\left(\Z^{2},+,0,<_{\text{lex}}\right)$,
where $<_{\text{lex}}$ is the lexicographic order on $\Z^{2}$. By
\cite[Proposition 5.1]{JSW17}, this structure is dp-minimal. Let
$P:=\left\{ 0\right\} \times\Z$. Then $P$ is externally definable
in $\left(\Z^{2},+,0,<_{\text{lex}}\right)$ (in fact, by \cite[Lemma 2.1]{CH11},
$P$ is definable\footnote{Explicitly, $P=\left\{ x\in\Z^{2}\,:\,\left(1,0\right)\notin\left[0,2\left\vert x\right\vert \right]+2\Z^{2}\right\} ,$
where $\left\vert x\right\vert $ is the absolute value of $x$ with
respect to $<_{\text{lex}}$.}). Let $<_{P}$ be the restriction of $<_{\text{lex}}$ to $P$, i.e.,
$\left(x_{1},x_{2}\right)<_{P}\left(y_{1},y_{2}\right)$ if and only
if $x_{1}=y_{1}=0$ and $x_{2}<y_{2}$. Then $<_{P}$ is externally
definable (in fact, definable) in $\left(\Z^{2},+,0,<_{\text{lex}}\right)$.
Therefore, the structure $\left(\Z^{2},+,0,P,<_{P}\right)$ is dp-minimal.
So it seems that \cref{conj:conj_on_fin_gen_abelian_groups} cannot
be strengthened to say, for example, that there is a group order on
$\left(G,+,0\right)$ which is definable in $\mathcal{G}$.

\section{Dp-rank}

We recall the definitions of $\kappa_{ict}$, $\kappa_{inp}$, dp-rank,
and the Shelah expansion, and some basic facts about them. We also
prove that an apparently weaker condition is in fact sufficient for
$\kappa_{ict}>\kappa$.
\begin{defn}
Let $T$ be a theory, and let $\kappa$ be a cardinal. An \emph{ict-pattern
of depth $\kappa$ }consists of:

\begin{itemize}
\item a collection of formulas $(\phi_{\alpha}(x;y_{\alpha})\,:\,\alpha<\kappa)$,
with $\left\vert x\right\vert =1$, and
\item an array $(b_{\alpha,i}\,:\,i<\omega,\,\alpha<\kappa)$ of tuples
in some model $\cM$ of $T$, with $|b_{\alpha,i}|=|y_{\alpha}|$
\end{itemize}
such that for every $\eta:\kappa\to\omega$ there exists an element
$a_{\eta}\in\cM$ such that 
\[
\cM\models\phi_{\alpha}(a_{\eta};b_{\alpha,i})\iff\eta(\alpha)=i
\]

We define $\kappa_{ict}=\kappa_{ict}(T)$ as the minimal $\kappa$
such that there does not exist an ict-pattern of depth $\kappa$,
and define $\kappa_{ict}=\infty$ if there is no such $\kappa$. 
\end{defn}

\begin{defn}
Let $T$ be a theory, and let $\kappa$ be a cardinal. We say that
$\mbox{dp-rank}(T)<\kappa$ if $\kappa_{ict}(T)\leq\kappa$. We say
that $\mbox{dp-rank}(T)=\kappa$ if $\mbox{dp-rank}(T)<\kappa^{+}$
but $\mbox{dp-rank}(T)\nless\kappa$. We also say that $\mbox{dp-rank}(T)=\infty$
if $\kappa_{ict}(T)=\infty$. $T$ is called \emph{strongly-dependent}
if $\mbox{dp-rank}(T)<\omega$, and is called \emph{dp-minimal} if
$\mbox{dp-rank}(T)=1$. For a structure $\cM$, we let $\kappa_{ict}(\cM):=\kappa_{ict}(Th(\cM))$,
$\mbox{dp-rank}(\cM):=\mbox{dp-rank}(Th(\cM))$, and we say that $\cM$
is \emph{strongly-dependent} or that $\cM$ is \emph{dp-minimal} if
$Th(\cM)$ is, respectively.
\end{defn}

This notion of dp-rank is more commonly used than $\kappa_{ict}$.
It has a small aesthetic advantage over $\kappa_{ict}$ in that, for
theories with infinite models, the smallest possible value of the
dp-rank is $1$, whereas the smallest possible value of $\kappa_{ict}$
is $2$. However, it has the disadvantage that the dp-rank is not
always a well-defined cardinal. For example, if $\kappa_{ict}(T)=\omega$
then $\mbox{dp-rank}(T)<\omega$ but $\mbox{dp-rank}(T)\neq n$ for
all $n<\omega$. The same happens when $\kappa_{ict}$ is a limit
cardinal.

The notion of dp-rank has an equivalent definition in terms of indiscernible
sequences, see \cite[Proposition 4.22]{Simon_2015}.
\begin{fact}[{\cite[Observation 4.13]{Simon_2015}}]
\label{fact:NIP_equivalent_to_non_infinite_dp_rank}A theory $T$
is NIP if and only if $\mbox{dp-rank}(T)<\infty$.
\end{fact}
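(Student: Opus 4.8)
The plan is to prove both implications by contraposition, working directly from the combinatorial definition of $\kappa_{ict}$ and recalling that, by the definitions above, $\mbox{dp-rank}(T)<\infty$ is equivalent to $\kappa_{ict}(T)<\infty$ (i.e.\ to the existence of a cardinal $\kappa$ admitting no ict-pattern of depth $\kappa$). The only external input I would use is the standard reduction that $T$ has the independence property (IP) if and only if some formula $\phi(x;y)$ with $|x|=1$ has IP; this is precisely what bridges IP, whose definition allows parameters $x$ of any length, and ict-patterns, whose definition fixes $|x|=1$.

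First I would show that if $T$ has IP then $\kappa_{ict}(T)=\infty$. Using the reduction, fix $\phi(x;y)$ with $|x|=1$ witnessing IP, so $\phi$ shatters an infinite family; by compactness, for any cardinal $\kappa$ there are parameters $(b_{\alpha,i}\,:\,\alpha<\kappa,\,i<\omega)$ such that for every subset $S\subseteq\kappa\times\omega$ there is $a_S$ with $\models\phi(a_S;b_{\alpha,i})\iff(\alpha,i)\in S$. For $\eta:\kappa\to\omega$ set $S_\eta=\{(\alpha,\eta(\alpha))\,:\,\alpha<\kappa\}$; then $a_{S_\eta}$ satisfies $\phi(a_{S_\eta};b_{\alpha,i})\iff i=\eta(\alpha)$, so the constant sequence of formulas $\phi$ together with this array is an ict-pattern of depth $\kappa$. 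As $\kappa$ is arbitrary, $\kappa_{ict}(T)=\infty$.

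Conversely I would bound $\kappa_{ict}(T)$ when $T$ is NIP, showing there is no ict-pattern of depth $|T|^{+}$, hence $\kappa_{ict}(T)\le|T|^{+}<\infty$. Suppose toward a contradiction that $\big((\phi_\alpha\,:\,\alpha<|T|^{+}),(b_{\alpha,i})\big)$ is such a pattern. There are only $|T|$ formulas, and if each occurred in only finitely many rows the pattern would have at most $|T|<|T|^{+}$ rows; so some single formula $\phi$ occurs in an infinite set $J$ of rows. Restricting to the rows in $J$ (a sub-family of rows of an ict-pattern is again an ict-pattern, by extending any $\eta:J\to\omega$ arbitrarily) and reindexing, I may assume $\phi_\alpha=\phi$ for all $\alpha$ in an infinite index set. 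Now for each subset $S$ of that index set let $\eta_S$ send $\alpha$ to $0$ if $\alpha\in S$ and to $1$ otherwise; the defining property yields $a_{\eta_S}$ with $\phi(a_{\eta_S};b_{\alpha,0})\iff\eta_S(\alpha)=0\iff\alpha\in S$. Thus the infinite sequence $(b_{\alpha,0})_\alpha$ is shattered by $\phi(x;y)$, so $\phi$ has IP, contradicting NIP.

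The one genuinely nontrivial ingredient is the reduction of IP to single-variable formulas, which I would cite as known and which is needed only in the first implication, to match the $|x|=1$ convention in the definition of an ict-pattern; I therefore expect this to be the main obstacle. The second implication needs no such reduction, since an ict-pattern already has $|x|=1$ and so directly produces a single-variable formula witnessing IP, and the remaining work is only a pigeonhole count on formulas and the routine compactness/shattering bookkeeping above.
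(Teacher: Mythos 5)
The paper offers no proof of this statement: it is imported verbatim as a Fact from Observation 4.13 of Simon's \emph{A Guide to NIP Theories}, so there is nothing internal to compare against. Your argument is correct and is essentially the standard proof of that observation: the reduction of IP to formulas $\phi(x;y)$ with $\left\vert x\right\vert =1$ is indeed the one nontrivial imported ingredient in the forward direction, and the converse is exactly the pigeonhole-plus-shattering argument you give. Two minor points of hygiene: the pigeonhole count should read that finitely many occurrences per formula would give at most $\left\vert T\right\vert \cdot\aleph_{0}=\left\vert T\right\vert$ rows (still $<\left\vert T\right\vert ^{+}$, so the conclusion stands), and in both directions you tacitly invoke the symmetry of IP under exchanging the roles of variable and parameter (once to pass from IP of $\phi$ to an independent family of instances $\phi(x;b_{\alpha,i})$ over the $1$-dimensional home sort, and once to conclude IP from the shattering of the parameter sequence $\left(b_{\alpha,0}\right)_{\alpha}$); both are standard facts worth citing explicitly.
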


\begin{defn}
Let $T$ be a theory, and let $\kappa$ be a cardinal. An \emph{inp-pattern
of depth $\kappa$ }consists of:

\begin{itemize}
\item a collection of formulas $(\phi_{\alpha}(x;y_{\alpha})\,:\,\alpha<\kappa)$,
with $\left\vert x\right\vert =1$, and
\item an array $(b_{\alpha,i}\,:\,i<\omega,\,\alpha<\kappa)$ of tuples
in some model of $T$, with $|b_{\alpha,i}|=|y_{\alpha}|$
\end{itemize}
such that: 
\begin{enumerate}
\item for each $\alpha<\kappa$ there exists $k_{\alpha}\in\N$ for which
the row $\left\{ \phi_{\alpha}(x;b_{\alpha,i})\,:\,i<\omega\right\} $
is $k_{\alpha}$-inconsistent, and
\item for every $\eta:\kappa\to\omega$ the path $\left\{ \phi_{\alpha}(x;b_{\alpha,\eta(\alpha)})\,:\,\alpha<\kappa\right\} $
is consistent.
\end{enumerate}
We define $\kappa_{inp}=\kappa_{inp}(T)$ as the minimal $\kappa$
such that there does not exist an inp-pattern of depth $\kappa$,
and define $\kappa_{inp}=\infty$ if there is no such $\kappa$. 
\end{defn}

\begin{rem}
\label{rem:for_inp_ptrn_with_indiscernibles_enough_first_column}In
the definition of an inp-pattern, if the rows of the array $(b_{\alpha,i}\,:\,i<\omega,\,\alpha<\kappa)$
are mutually indiscernible, then instead of requiring that every path
$\left\{ \phi_{\alpha}(x;b_{\alpha,\eta(\alpha)})\,:\,\alpha<\kappa\right\} $
is consistent, it is enough to only require that the first column
$\left\{ \phi_{\alpha}(x;b_{\alpha,0})\,:\,\alpha<\kappa\right\} $
is consistent.
\end{rem}

\begin{fact}[{\cite[Proposition 10]{Adl}}]
\label{fact:k_inp_eq_k_ict_under_nip}For every theory $T$, $\kappa_{inp}\le\kappa_{ict}$,
and if $T$ is NIP, $\kappa_{inp}=\kappa_{ict}$.
\end{fact}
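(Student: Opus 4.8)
The plan is to prove the two inequalities separately by interconverting inp- and ict-patterns of the same depth, phrasing the ict side through the equivalent characterization of dp-rank via mutually indiscernible sequences (\cite[Proposition 4.22]{Simon_2015}) whenever that is convenient. Throughout, I would first replace a given array by a mutually indiscernible one, extracted by the usual Ramsey-plus-compactness argument over a realizer of the relevant path so that path-consistency is preserved; by \remref{for_inp_ptrn_with_indiscernibles_enough_first_column} it then suffices to control the first column.

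For $\kappa_{inp}\le\kappa_{ict}$ I would show that an inp-pattern of depth $\kappa$ yields $\mbox{dp-rank}\ge\kappa$, i.e.\ an ict-pattern of depth $\kappa$. Starting from an inp-pattern $(\phi_\alpha(x;y_\alpha),\,b_{\alpha,i})$ of depth $\kappa$, I would extract mutually indiscernible rows $I_\alpha=(b_{\alpha,i})_{i<\omega}$ preserving both the $k_\alpha$-inconsistency of each row and the consistency of the first column, and let $a$ realize the first column, so $\models\phi_\alpha(a;b_{\alpha,0})$ for every $\alpha$. Each $I_\alpha$ then fails to be indiscernible over $a$: if $\phi_\alpha(a;b_{\alpha,i})$ held for all $i$ then $a$ would realize $k_\alpha$ formulas of row $\alpha$, contradicting $k_\alpha$-inconsistency; hence $\phi_\alpha(a;b_{\alpha,i})$ is true at $i=0$ yet false for some $i$, so $\phi_\alpha(a;-)$ is non-constant on $I_\alpha$. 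Thus $(I_\alpha)_{\alpha<\kappa}$ are $\kappa$ mutually indiscernible sequences, none indiscernible over the single element $a$, which by the characterization above gives $\mbox{dp-rank}\ge\kappa$. Since this holds for every $\kappa$ carrying an inp-pattern, $\kappa_{inp}\le\kappa_{ict}$; note this argument uses no hypothesis on $T$, and in any case if $T$ has IP then $\kappa_{ict}=\infty$ by \factref{NIP_equivalent_to_non_infinite_dp_rank} and the inequality is immediate.

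For the reverse inequality under NIP, $\kappa_{ict}\le\kappa_{inp}$, I would turn an ict-pattern of depth $\kappa$ into an inp-pattern of depth $\kappa$. Again extracting mutually indiscernible rows while keeping the ict defining property (a type-consistency condition, hence preserved), I would set $\psi_\alpha(x;(y,y')):=\phi_\alpha(x;y)\wedge\neg\phi_\alpha(x;y')$ with paired parameters $d_{\alpha,i}:=(b_{\alpha,2i},b_{\alpha,2i+1})$. Path-consistency is inherited directly: for $\eta:\kappa\to\omega$, the element given by the ict-property for the function $\alpha\mapsto 2\eta(\alpha)$ satisfies $\phi_\alpha$ at $b_{\alpha,2\eta(\alpha)}$ while negating $b_{\alpha,2\eta(\alpha)+1}$, hence satisfies $\psi_\alpha(x;d_{\alpha,\eta(\alpha)})$ in all rows simultaneously. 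For row inconsistency I would invoke NIP: each satisfied entry $\psi_\alpha(x;d_{\alpha,i})$ forces one true-then-false alternation of $\phi_\alpha(x;-)$ along $I_\alpha$, so an element meeting $m$ entries of row $\alpha$ forces $m$ disjoint alternations; as the alternation number of the NIP formula $\phi_\alpha$ on an indiscernible sequence is bounded by some finite $N_\alpha$, the row is $(N_\alpha+1)$-inconsistent, producing the desired inp-pattern.

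I expect the NIP direction to carry the real weight. The main obstacle is obtaining the uniform finite bound $k_\alpha=N_\alpha+1$ in the reverse inequality: this is exactly where NIP is indispensable, since without bounded alternation the paired rows need not be $k$-inconsistent, and for theories with IP the two invariants genuinely diverge ($\kappa_{ict}=\infty$ while $\kappa_{inp}$ may be finite). Some care is also needed to perform the extractions to mutually indiscernible arrays while simultaneously preserving $k_\alpha$-inconsistency, first-column consistency, and the ict defining property; I would handle this uniformly by extracting over a model containing the relevant path-realizers and appealing to \remref{for_inp_ptrn_with_indiscernibles_enough_first_column}.
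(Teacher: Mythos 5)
The paper offers no proof of this statement: it is imported as a Fact with the citation \cite[Proposition 10]{Adl}, so there is nothing in the text to compare your argument against, and it must be judged on its own. On that basis it is correct, and it is essentially the standard argument. The direction $\kappa_{inp}\le\kappa_{ict}$ via $\kappa$ mutually indiscernible rows, a realizer $a$ of the first column, and the observation that $k_\alpha$-inconsistency forces $\phi_\alpha(a;-)$ to be non-constant on row $\alpha$, combined with the equivalence from \cite[Proposition 4.22]{Simon_2015}, is sound and indeed needs no hypothesis on $T$; the NIP direction via $\psi_\alpha(x;y,y'):=\phi_\alpha(x;y)\wedge\neg\phi_\alpha(x;y')$ on paired parameters, with row inconsistency coming from bounded alternation, is exactly the device the paper itself uses inside the proof of \lemref{ict_pattern_with_bounded_errors_is_enough} (which takes the present Fact as input, so no circularity arises). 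One small correction to the setup: for the inp direction you should extract the mutually indiscernible array over $\emptyset$ so as to realize the EM-type of the original \emph{array}, which preserves both the $k_\alpha$-inconsistency of each row and the consistency of every finite sub-path (both are conditions on finite sub-arrays holding for all choices of indices), and only afterwards realize the first column of the new array. Extracting ``over a realizer of the relevant path'' is not quite right: since $\phi_\alpha(a;-)$ is non-constant along the original row, neither $\phi_\alpha(a;y)$ nor its negation lies in the EM-type of that row over $a$, so the extracted parameters need not keep $a$ as a realizer of their first column.
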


\begin{lem}
\label{lem:ict_pattern_with_bounded_errors_is_enough}Let $T$ be
a theory, let $\kappa$ be a cardinal, and let $C\in\N$. Suppose
that the following pattern exists:

\begin{itemize}
\item a collection of formulas $(\phi_{\alpha}(x;y_{\alpha})\,:\,\alpha<\kappa)$,
with $\left\vert x\right\vert =1$, and
\item an array $(b_{\alpha,i}\,:\,i<\omega,\,\alpha<\kappa)$ of tuples
in some model $\cM$ of $T$, with $|b_{\alpha,i}|=|y_{\alpha}|$
\end{itemize}
such that for every $\eta:\kappa\to\omega$ there exists an element
$a_{\eta}\in\cM$ such that for all $\alpha<\kappa$, $\cM\models\phi_{\alpha}(a_{\eta};b_{\alpha,\eta(\alpha)})$
and $\left\vert \left\{ i<\omega\,:\,\cM\models\phi_{\alpha}(a_{\eta};b_{\alpha,i})\right\} \right\vert \le C$.
Then there exists an ict-pattern of depth $\kappa$.
\end{lem}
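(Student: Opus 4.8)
The plan is to reduce to a configuration of \emph{mutually indiscernible} rows and then extract a genuine ict-pattern from a single element, using the equivalence between the present definition of dp-rank and the one via indiscernible sequences (\cite[Proposition 4.22]{Simon_2015}). The key point is that the uniform bound $C$ on the number of errors is exactly what guarantees that, for one fixed element, each row contains \emph{cofinally many} columns on which $\phi_{\alpha}$ fails, so that this single element can ``break'' every row.

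First I would specialize the hypothesis to the constant path $\eta\equiv 0$, obtaining one element $a:=a_{\eta}$ with $\models\phi_{\alpha}(a;b_{\alpha,0})$ for all $\alpha<\kappa$ and $\left\vert \left\{ i<\omega\,:\,\models\phi_{\alpha}(a;b_{\alpha,i})\right\} \right\vert \le C+1$ for each $\alpha$. In particular, for every $\alpha$ the set of columns $i$ with $\models\neg\phi_{\alpha}(a;b_{\alpha,i})$ is cofinal in $\omega$. Hence for every finite $F\subseteq\kappa$ and every large enough $i$ the sentence $\exists x\,\bigwedge_{\alpha\in F}\bigl(\phi_{\alpha}(x;b_{\alpha,0})\wedge\neg\phi_{\alpha}(x;b_{\alpha,i})\bigr)$ holds, witnessed by $a$.

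Next I would pass, by the standard Ramsey-and-compactness extraction of mutually indiscernible sequences (as in \cite{Simon_2015}), to a mutually indiscernible array $(b'_{\alpha,i})$ whose type is based on the original array (its finite sub-types are realized by increasing tuples from the original). Because the existential sentence above holds for cofinally many index tuples of a fixed order type in the original array, it transfers to the extracted one: fixing two indices $s<t$ in each new row, the partial type $p(x):=\{\phi_{\alpha}(x;b'_{\alpha,s})\wedge\neg\phi_{\alpha}(x;b'_{\alpha,t})\,:\,\alpha<\kappa\}$ is finitely consistent, hence consistent. Let $a^{*}$ realize $p$. Then $\models\phi_{\alpha}(a^{*};b'_{\alpha,s})$ while $\models\neg\phi_{\alpha}(a^{*};b'_{\alpha,t})$, so each row $(b'_{\alpha,i})_{i}$ fails to be indiscernible over $a^{*}$, while the rows remain mutually indiscernible.

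Finally, I would invoke the characterization of dp-rank via mutually indiscernible sequences (\cite[Proposition 4.22]{Simon_2015}): the existence of $\kappa$ mutually indiscernible sequences together with a single length-one tuple $a^{*}$ over which none of them is indiscernible witnesses that $\kappa_{ict}(T)>\kappa$, i.e. that an ict-pattern of depth $\kappa$ exists. (Alternatively, one can avoid this black box and build the pattern by hand: non-indiscernibility over $a^{*}$ yields, in each row, a formula alternating along the indiscernible sequence, and grouping consecutive columns into single parameters while using Ramsey to make truth values depend only on the order type assembles the ict-pattern directly by compactness.) I expect the extraction and transfer step to be the main obstacle: one must arrange mutual indiscernibility over $\emptyset$ (so that the rows can still be broken by $a^{*}$), and it is precisely the bound $C$---forcing cofinally many negative columns---that keeps $p$ consistent, since without it the chosen element could satisfy $\phi_{\alpha}$ on every column and leave the row indiscernible.
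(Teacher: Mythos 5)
Your overall architecture is close to the paper's: extract a mutually indiscernible array, find a single element that satisfies $\phi_{\alpha}$ at one column and $\neg\phi_{\alpha}$ at a later column in every row, and conclude via the indiscernible-sequence characterization of dp-rank (the paper instead turns this configuration into an inp-pattern using NIP and \factref{k_inp_eq_k_ict_under_nip}, after first disposing of the non-NIP case; your route through \cite[Proposition 4.22]{Simon_2015} is a legitimate substitute for that last step). The problem is the transfer step, which you yourself flag as the main obstacle: it has a genuine gap. The extraction lemma says that the $\Delta$-type of a finite configuration of the new array $(b'_{\alpha,i})$ \emph{equals} the $\Delta$-type of \emph{some} increasing configuration of the original array. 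Hence a formula transfers \emph{to} the new array only if it holds of \emph{all} increasing configurations of the old one; knowing it holds for cofinally many is not enough, since the witnessing old configuration may be one of the bad ones. And what you actually established from $\eta\equiv0$ is weaker still: the sentence $\exists x\,\bigwedge_{\alpha\in F}\bigl(\phi_{\alpha}(x;b_{\alpha,0})\wedge\neg\phi_{\alpha}(x;b_{\alpha,i})\bigr)$ holds only for tuples whose first index is $0$ and whose second index avoids the finite error set of the one element $a$; for a general increasing tuple $\bigl(i_{0}^{\alpha}<i_{1}^{\alpha}\bigr)_{\alpha\in F}$ you would have to re-invoke the hypothesis with a path through $\bigl(i_{0}^{\alpha}\bigr)_{\alpha}$, and the resulting error sets move with the path, so the sentence genuinely fails for some increasing tuples. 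Thus the finite consistency of $p(x)$ does not follow from what you wrote.

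The consistency of $p(x)$ is in fact the whole content of the second case of the paper's proof, and it is established by contradiction rather than by transfer. If some finite part $\left\{ \phi_{\alpha}(x;b'_{\alpha,s})\wedge\neg\phi_{\alpha}(x;b'_{\alpha,t})\,:\,\alpha\in I\right\} $ were inconsistent, then $\forall x\bigl(\bigwedge_{\alpha\in I}\phi_{\alpha}(x;b'_{\alpha,s})\rightarrow\bigvee_{\alpha\in I}\phi_{\alpha}(x;b'_{\alpha,t})\bigr)$ would hold, and by mutual indiscernibility the same would hold with $t$ replaced by any other index; conjoining $L\cdot C+1$ instances (where $L=\left\vert I\right\vert$) yields a single formula about a finite configuration of the new array, which \emph{does} transfer back to some increasing configuration of the original array (this is the correct direction of the extraction lemma). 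Applying the hypothesis to a path through the first column of that configuration produces an element forced to satisfy $\phi_{\alpha}$ at $L\cdot C+1$ additional columns spread over only $L$ rows, and the Pigeonhole Principle contradicts the bound $C$. You need to supply this argument (or an equivalent Ramsey-type homogenization of the original array before extraction) to close the gap; as written, the step ``it transfers to the extracted one'' is not valid.
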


\begin{proof}
We may assume that $T$ is NIP, as otherwise, by \cref{fact:NIP_equivalent_to_non_infinite_dp_rank},
$\kappa_{ict}(T)=\infty$. Let $(c_{\alpha,i}\,:\,i<\omega,\,\alpha<\kappa)$
be a mutually indiscernible array locally based on $(b_{\alpha,i}\,:\,i<\omega,\,\alpha<\kappa)$
(see \cite[Lemma 4.2]{Simon_2015}).

Suppose that the set of formulas $\left\{ \phi_{\alpha}(x;c_{\alpha,0})\right\} _{\alpha<\kappa}\cup\left\{ \neg\phi_{\alpha}(x;c_{\alpha,1})\right\} _{\alpha<\kappa}$
is consistent. Note that the rows of the array $((c_{\alpha,2i},c_{\alpha,2i+1})\,:\,i<\omega,\,\alpha<\kappa)$
are mutually indiscernible, and consider the pattern 
\[
\left\{ \phi_{\alpha}(x;c_{\alpha,2i})\wedge\neg\phi_{\alpha}(x;c_{\alpha,2i+1})\,:\,\alpha<\kappa,\,i<\omega\right\} 
\]
Since $T$ is NIP, by indiscernibility, for each $\alpha$ the row
$\left\{ \phi_{\alpha}(x;c_{\alpha,2i})\wedge\neg\phi_{\alpha}(x;c_{\alpha,2i+1})\,:\,i<\omega\right\} $
is inconsistent (see \cite[Lemma 2.7]{Simon_2015}), and hence $k_{\alpha}$-inconsistent
for some $k_{\alpha}\in\N$. So by \cref{rem:for_inp_ptrn_with_indiscernibles_enough_first_column},
this pattern is an inp-pattern of depth $\kappa$. By \cref{fact:k_inp_eq_k_ict_under_nip},
$\kappa_{ict}>\kappa$.

Now suppose that the set of formulas $\left\{ \phi_{\alpha}(x;c_{\alpha,0})\right\} _{\alpha<\kappa}\cup\left\{ \neg\phi_{\alpha}(x;c_{\alpha,1})\right\} _{\alpha<\kappa}$
is inconsistent. By compactness, there is a finite set $I\subseteq\kappa$
such that $\left\{ \phi_{\alpha}(x;c_{\alpha,0})\right\} _{\alpha\in I}\cup\left\{ \neg\phi_{\alpha}(x;c_{\alpha,1})\right\} _{\alpha\in I}$
is inconsistent, so $\forall x(\bigwedge_{\alpha\in I}\phi_{\alpha}(x;c_{\alpha,0})\rightarrow\bigvee_{\alpha\in I}\phi_{\alpha}(x;c_{\alpha,1}))$.
By the mutual indiscernibility of $(c_{\alpha,i}\,:\,i<\omega,\,\alpha<\kappa)$,
for all $i\ge1$ also $\forall x(\bigwedge_{\alpha\in I}\phi_{\alpha}(x;c_{\alpha,0})\rightarrow\bigvee_{\alpha\in I}\phi_{\alpha}(x;c_{\alpha,i}))$.
Let $L=\left\vert I\right\vert $. So in particular, 
\[
\forall x(\bigwedge_{\alpha\in I}\phi_{\alpha}(x;c_{\alpha,0})\rightarrow\bigwedge_{i=1}^{LC+1}\bigvee_{\alpha\in I}\phi_{\alpha}(x;c_{\alpha,i}))
\]
By the choice of the array $(c_{\alpha,i}\,:\,i<\omega,\,\alpha<\kappa)$,
there is a witness for this in the original array $(b_{\alpha,i}\,:\,i<\omega,\,\alpha<\kappa)$,
i.e., if we write $I=\{\alpha_{1},\dots,\alpha_{L}\}$, then there
are elements $\left\{ b_{\alpha_{s},i_{s,t}}\,:\,1\le s\le L,\,0\le t\le LC+1\right\} $
such that 
\[
\forall x(\bigwedge_{s=1}^{L}\phi_{\alpha_{s}}(x;b_{\alpha_{s},i_{s,0}})\rightarrow\bigwedge_{t=1}^{LC+1}\bigvee_{s=1}^{L}\phi_{\alpha_{s}}(x;b_{\alpha_{s},i_{s,t}}))
\]
 But by the assumption of the lemma, there exists an element $a\in\cM$
such that for all $1\le s\le L$, $\cM\models\phi_{\alpha_{s}}(a;b_{\alpha_{s},i_{s,0}})$
and $\left\vert \left\{ i<\omega\,:\,\cM\models\phi_{\alpha_{s}}(a;b_{\alpha_{s},i})\right\} \right\vert \le C$.
By the Pigeonhole Principle, this is a contradiction.
\end{proof}
\begin{defn}
Let $\cM$ be a structure in a language $L$, and fix an elementary
extension $\cN$ of $\cM$ which is $\left\vert \cM\right\vert ^{+}$-saturated. 
\begin{enumerate}
\item An \emph{externally definable} subset of $\cM$ is a subset of $\cM^{k}$
of the form 
\[
\phi(\cM;b):=\left\{ a\in\cM\,:\,\cN\models\phi(a;b)\right\} 
\]
for some $k\ge1$, $\phi(x;y)\in L$, and $b\in\cN^{\left\vert y\right\vert }$. 
\item The \emph{Shelah expansion} of $\cM$, denoted by $\cM^{Sh}$, is
defined to be the structure in the language $L^{Sh}:=\left\{ R_{\phi(x;b)}(x)\,:\,\phi(x;y)\in L,\,b\in\cN^{\left\vert y\right\vert }\right\} $
whose universe is $\cM$ and where each $R_{\phi(x;b)}(x)$ is interpreted
as $\phi(\cM;b)$.
\end{enumerate}
Note that the property of being an externally definable subset of
$\cM$ does not depend on the choice of $\cN$. Hence, up to interdefinability,
$\cM^{Sh}$ does not depend on the choice of $\cN$ (although formally,
a different $\cN$ gives a different language and so a different structure),
so it makes sense to talk about \emph{the} Shelah expansion.
\end{defn}

\begin{fact}[{Shelah. See \cite[Proposition 3.23]{Simon_2015}}]
\label{fact:nip_gives_shelah_expansion_QE}Let $\cM$ be NIP. Then
$\cM^{Sh}$ eliminates quantifiers.
\end{fact}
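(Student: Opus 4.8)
The plan is to prove the equivalent statement that every subset of $M^{n}$ definable in $M^{Sh}$ is already externally definable in $M$; elimination of quantifiers for $M^{Sh}$ is then immediate, since the atomic $L^{Sh}$-relations are by definition exactly the externally definable sets, so ``externally definable'' coincides with ``quantifier-free $L^{Sh}$-definable''. Arguing by induction on the complexity of an $L^{Sh}$-formula, the atomic case holds by definition, and Boolean combinations of externally definable sets are again externally definable (the externally definable subsets of $M^{n}$ form a Boolean algebra, since complements, conjunctions and disjunctions are realized by $\neg\phi$, and by $\phi$-formulas with concatenated parameters). Thus everything reduces to a single existential quantifier ranging over $M$: I must show that if $X\subseteq M^{n+1}$ is externally definable then its projection $\pi(X)=\{\bar a\in M^{n}\,:\,\exists t\in M,\ (t,\bar a)\in X\}$ is externally definable. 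This is the entire content of the statement, and it is where the NIP hypothesis is used.

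For the projection step the main tool I would invoke is the existence of honest definitions, which is the genuinely NIP-theoretic input. Write $X=\phi(M^{n+1};b)$ with $b$ in an $|M|^{+}$-saturated elementary extension $N\succeq M$. Since $Th(M)$ is NIP, there is a formula $\psi(x,\bar y;z)$ and a parameter $d\in N^{|z|}$ such that $\psi(M^{n+1};d)=\phi(M^{n+1};b)$ (agreement on $M$) and $N\models\forall x\forall\bar y\,(\psi(x,\bar y;d)\to\phi(x,\bar y;b))$ (honesty). I would then propose the external definition $\pi(X)=\{\bar a\in M^{n}\,:\,N\models\exists x\,\psi(x,\bar a;d)\}$, the right-hand side being externally definable as the trace on $M^{n}$ of the $N$-formula $\exists x\,\psi(x,\bar y;z)$ with the external parameter $d$. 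The inclusion $\subseteq$ is the easy direction: an $M$-witness $t$ for $(t,\bar a)\in X$ satisfies $\psi(t,\bar a;d)$ by agreement, hence witnesses $\exists x\,\psi$. The inclusion $\supseteq$ uses honesty: a witness $c\in N$ of $\psi(c,\bar a;d)$ satisfies $\phi(c,\bar a;b)$.

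The step I expect to be the main obstacle is twofold. First, the existence of honest definitions is itself the crux and is exactly where NIP enters: it is proved using finiteness of the (dual) VC-dimension, for instance via the $(p,q)$-theorem, or via a compactness argument built on mutually indiscernible sequences in the same spirit as the constructions used earlier in this section; without NIP the projection of an externally definable set need not be externally definable at all, so some dividing-line hypothesis is essential. Second, the inclusion $\supseteq$ above only produces a witness $c\in N$, whereas $\pi(X)$ demands a witness in $M$, so I must arrange that existential witnesses descend to $M$. The plan here is to choose the honest definition with its parameter $d$ realizing a type finitely satisfiable in $M$ (a coheir), so that satisfiability of $\psi(x,\bar a;d)$ for $\bar a\in M$ reflects down into $M$; controlling this descent, together with producing the honest definition in the first place, is the heart of the matter. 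Once the projection identity is secured, the atomic and Boolean cases assemble with it into full quantifier elimination for $M^{Sh}$.
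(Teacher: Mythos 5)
First, a remark on the comparison itself: the paper does not prove this statement --- it is quoted as \cite[Proposition 3.23]{Simon_2015} --- so I am measuring your proposal against the standard proof in that reference. Your architecture is the right one (reduce quantifier elimination for $M^{Sh}$ to closure of externally definable sets under projection along a single $M$-quantifier, and attack the projection with honest definitions), and you correctly identify the two pressure points. But both of those points are exactly where your write-up breaks. The version of ``honest definition'' you state is vacuous: if the parameter $d$ is allowed to lie in $N$ and honesty means $N\models\forall x\forall\bar y\,(\psi(x,\bar y;d)\to\phi(x,\bar y;b))$ together with agreement on $M$, then $\psi:=\phi$ and $d:=b$ already satisfy both conditions, so no NIP content has been encoded. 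The genuine statement places $d$ in $(M')^{|z|}$, where $(N',M')$ is a sufficiently saturated elementary extension of the \emph{pair} $(N,M)$ (with a predicate $P$ for the small model), and the honesty inclusion is between traces on the small model, $\psi((M')^{n+1};d)\subseteq\phi((M')^{n+1};b)$; only in that form does the statement have content, and only there is NIP needed.

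Second, with your formulation the inclusion $\supseteq$ genuinely fails, as you yourself suspect: a witness $c\in N$ of $\psi(c,\bar a;d)$ gives $(c,\bar a)\in\phi(N^{n+1};b)$, which says nothing about $\bar a\in\pi(X)$, since $X$ is the trace on $M^{n+1}$. Your proposed repair via a coheir does not close this gap: finite satisfiability of $\mathrm{tp}(d/M\bar a)$ in $M$ only produces some $d_{0}\in M$ with $M\models\exists x\,\psi(x,\bar a;d_{0})$, and the resulting witness $t$ satisfies $\psi(t,\bar a;d_{0})$ rather than $\psi(t,\bar a;d)$, so there is no way to conclude $(t,\bar a)\in X$. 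The mechanism that actually works is different: take the external definition of $\pi(X)$ to be $\left\{ \bar a\in M^{n}\,:\,M'\models\exists x\,\psi(x,\bar a;d)\right\}$ with the quantifier ranging over $M'$ (legitimate as an external definition, since $M\preceq M'$ as $L$-structures and $d\in M'$). A witness $c\in M'$ then lands in $\phi((M')^{n+1};b)$ by honesty, so the pair $(N',M')$ satisfies $\exists x\,(P(x)\wedge\phi(x,\bar a;b))$, and elementarity of the pair extension $(N,M)\preceq(N',M')$ pushes the witness down into $M$. In short: right skeleton, but the two steps you flag as the heart of the matter are precisely the ones that must be set up with the pair construction for the argument to go through.
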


It follows that every definable set in $\cM^{Sh}$ is externally definable
in $\cM$, and $\left(\cM^{Sh}\right)^{Sh}$ is interdefinable with
$\cM^{Sh}$.
\begin{cor}
\label{cor:shelah_expansion_preserves_dp_rank}Let $\cM$ be any structure.
Then $\kappa_{ict}(\cM^{Sh})=\kappa_{ict}(\cM)$.
\end{cor}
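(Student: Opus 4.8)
The plan is to prove the two inequalities $\kappa_{ict}(M)\le\kappa_{ict}(M^{Sh})$ and $\kappa_{ict}(M^{Sh})\le\kappa_{ict}(M)$ separately. First I would dispose of the non-NIP case: if $M$ is not NIP then by \factref{NIP_equivalent_to_non_infinite_dp_rank} we have $\kappa_{ict}(M)=\infty$, and since every $L$-definable subset of $M$ is quantifier-free definable in $M^{Sh}$ (so $M$ is a reduct of $M^{Sh}$, and any configuration witnessing IP for $Th(M)$ also witnesses it for $Th(M^{Sh})$), $M^{Sh}$ is not NIP either and $\kappa_{ict}(M^{Sh})=\infty$. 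Thus both are $\infty$ and we may assume henceforth that $M$ is NIP. The inequality $\kappa_{ict}(M)\le\kappa_{ict}(M^{Sh})$ then follows from the same reduct observation: fixing a monster model $\mathcal{U}\models Th(M)$ together with a larger saturated $N\succ\mathcal{U}$, the Shelah expansion $\mathcal{U}^{Sh}$ formed using $N$ is a model of $Th(M^{Sh})$ whose $L$-reduct is $\mathcal{U}$, so any ict-pattern of depth $\kappa$ for $Th(M)$ realized in $\mathcal{U}$ is literally one for $Th(M^{Sh})$ in $\mathcal{U}^{Sh}$.

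For the reverse inequality $\kappa_{ict}(M^{Sh})\le\kappa_{ict}(M)$, which is the substantial direction, I would start from an ict-pattern of depth $\kappa$ for $Th(M^{Sh})$ and produce an ict-pattern of depth $\kappa$ for $Th(M)$, so that $\kappa_{ict}(M^{Sh})>\kappa$ forces $\kappa_{ict}(M)>\kappa$. Realize the given pattern inside $\mathcal{U}^{Sh}$ as above. Since $M$ is NIP, \factref{nip_gives_shelah_expansion_QE} lets us assume the pattern formulas are quantifier-free $L^{Sh}$-formulas; collecting the external parameters of the atomic pieces into a single tuple, each formula takes the form $\chi_{\alpha}(x;y_{\alpha};c_{\alpha})$ for an $L$-formula $\chi_{\alpha}$ and an external parameter tuple $c_{\alpha}\in N$, while the array $(b_{\alpha,i})$ and all the witnesses $a_{\eta}$ lie in $\mathcal{U}$. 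After extracting mutually indiscernible rows (as in the proof of \lemref{ict_pattern_with_bounded_errors_is_enough}), NIP guarantees that along each row the truth value of $\chi_{\alpha}(a;b_{\alpha,i};c_{\alpha})$ alternates only boundedly often as $i$ grows.

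The crux is then to replace each external parameter $c_{\alpha}\in N$ by an internal one $c'_{\alpha}\in\mathcal{U}$ so that, for the \emph{same} witnesses $a_{\eta}$, the $L$-formulas $\chi_{\alpha}(x;y_{\alpha};c'_{\alpha})$ realize the original pattern up to a uniformly bounded number $C$ of errors in each row. By saturation of $\mathcal{U}$ together with the finite-alternation (equivalently, finite VC) property of the NIP formula $\chi_{\alpha}(x;y;z)$ — this is exactly where the bound $C$ comes from, and where uniformity over all the $a_{\eta}$ must be arranged, e.g.\ via honest definitions — one obtains such $c'_{\alpha}$. The resulting data $\bigl(\chi_{\alpha}(x;y_{\alpha};c'_{\alpha}),\,(b_{\alpha,i})\bigr)$ satisfies precisely the hypothesis of \lemref{ict_pattern_with_bounded_errors_is_enough}: every $\eta$ has a witness $a_{\eta}$ satisfying the designated instance $\chi_{\alpha}(a_{\eta};b_{\alpha,\eta(\alpha)};c'_{\alpha})$ and at most $C$ of the others per row. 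That lemma then yields a genuine ict-pattern of depth $\kappa$ for $Th(M)$, completing the reverse inequality and hence the corollary.

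I expect the main obstacle to be making the parameter replacement uniform in the $a_{\eta}$ while keeping the error count bounded; this is the step that genuinely uses NIP, and everything downstream is absorbed by \lemref{ict_pattern_with_bounded_errors_is_enough}, which is evidently tailored for exactly this purpose. A secondary point to handle carefully is the standard identification of $\mathcal{U}^{Sh}$ with a model of $Th(M^{Sh})$, so that ict-patterns for $Th(M^{Sh})$ may be taken to have their arrays and witnesses in $\mathcal{U}$ and their external parameters in $N$.
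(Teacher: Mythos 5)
Your treatment of the non-NIP case and of the easy inequality $\kappa_{ict}(M)\le\kappa_{ict}(M^{Sh})$ is fine and matches the intended argument, but the substantial direction has a genuine gap, located exactly at the step you yourself flag as ``the main obstacle''. The paper's proof is a citation of \cite[Observation 3.8]{OU11} together with \factref{nip_gives_shelah_expansion_QE}, and the point of that argument is that the external parameters never need to be internalized at all: after quantifier elimination each $L^{Sh}$-formula is a Boolean combination of predicates $R_{\phi(x,y,c)}$ with $c\in N$, and by the very definition of $M^{Sh}$, satisfaction of $R_{\phi(x,y,c)}$ on tuples from $M$ is satisfaction of the $L$-formula $\phi(x,y,c)$ computed in $N$. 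Since $N\models Th(M)$ and an ict-pattern may live in any model of the theory, the formulas $\phi_{\alpha}(x;y_{\alpha}z_{\alpha})$ with array $(b_{\alpha,i}c_{\alpha})$ and the \emph{same} witnesses $a_{\eta}$ already form an ict-pattern of depth $\kappa$ for $Th(M)$, realized in $N$, with zero errors. (The only thing to check is the transfer of the pattern from an abstract model of $Th(M^{Sh})$ back to $M^{Sh}$ itself; this is done finitely, since each finite sub-pattern is an existential $L^{Sh}$-sentence and $Th(M^{Sh})$ is complete, and compactness reassembles the finite sub-patterns.)

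By contrast, you propose to replace each $c_{\alpha}\in N$ by some $c'_{\alpha}\in\mathcal{U}$ so that \emph{all} witnesses $a_{\eta}$ simultaneously realize the pattern up to $C$ errors per row, justified only by an appeal to honest definitions and finite VC dimension. That uniform statement is not what those tools deliver: NIP bounds the number of \emph{alternations} of $\phi(a,b_{\alpha,i},c')$ along an indiscernible row, which still allows an infinite block of satisfied non-designated instances, whereas \lemref{ict_pattern_with_bounded_errors_is_enough} needs the set of such instances to have size at most $C$; and honest definitions approximate a single externally definable set on a given internal set of parameters, not uniformly over the $\omega^{\kappa}$-many witnesses $a_{\eta}$ with one fixed bound $C$. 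A correct internalization is in fact available --- for a finite sub-pattern, the finite conjunction of conditions $\pm\phi_{\alpha}(a_{\eta},b_{\alpha,i},z_{\alpha})$ has parameters in $\mathcal{U}$ and is realized in $N\succ\mathcal{U}$, hence is realized exactly in $\mathcal{U}$ by elementarity --- but that makes the bounded-error apparatus superfluous. As written, the detour through \lemref{ict_pattern_with_bounded_errors_is_enough} is both unjustified and unnecessary; that lemma is tailored to the sumset argument of \propref{unpopular_k_sumset_gives_dp_rank_atleast_k}, not to this corollary.
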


\begin{proof}
This follows from \cref{fact:nip_gives_shelah_expansion_QE}, exactly
as in the proof of \cite[Observation 3.8]{OU11}.
\end{proof}

\section{A lemma and reduction}

We now prove that \cref{thm:new-dp-minimal-infinite-set-of-natural-numbers}
implies \cref{thm:dp-minimal-which-does-not-eliminate-exists-infty}.
The following is the key lemma, which will also be used again in the
proof of \cref{thm:new-dp-minimal-infinite-set-of-natural-numbers}.
\begin{lem}
\label{lem:dp-minimality-prevents-large-two-sided-gaps}Suppose $\mathcal{Z}$
is an expansion of $\left(\mathbb{Z},+,0,1\right)$ which is dp-minimal
and does not eliminate $\exists^{\infty}$, and let $A$ be definable
in an elementary extension $\cM$ of $\mathcal{Z}$. Then there are
only finitely many elements $a\in A$ such that 
\[
A\cap\left(a+\mathbb{Z}\right)=A\cap\left\{ \dots,a-2,a-1,a,a+1,a+2,\dots\right\} 
\]
 is finite.
\end{lem}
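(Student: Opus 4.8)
The plan is to argue by contradiction: assume $A$ has infinitely many elements $a$ for which $A\cap(a+\Z)$ is finite, and from this manufacture an ict-pattern of depth $2$, contradicting dp-minimality (a dp-minimal theory admits no ict-pattern of depth $2$, i.e.\ $\kappa_{ict}=2$). Since I only ever need a pattern up to boundedly many errors, the engine will be \lemref{ict_pattern_with_bounded_errors_is_enough} rather than an exact ict-pattern, and I will feel free to pass to the Shelah expansion, which by \corref{shelah_expansion_preserves_dp_rank} changes neither $\kappa_{ict}$ nor the sets in play.

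First I would reduce to a cleaner configuration. If $a,a'$ lie in the same archimedean class, i.e.\ $a-a'\in\Z$, then $A\cap(a+\Z)=A\cap(a'+\Z)$, so each such finite set is a single \emph{cluster} shared by its (finitely many) members. Hence infinitely many witnesses force infinitely many distinct clusters, lying in infinitely many distinct archimedean classes. From each cluster I would extract its top point $t_{j}$ (the largest element under the natural order of the class $t_{j}+\Z\cong\Z$): then $t_{j}\in A$, while $t_{j}+n\notin A$ for every standard $n\ge1$ (a nonstandard gap immediately above), and, the cluster being finite, there is a nonstandard gap below it as well. This produces a sequence $(t_{j})_{j<\omega}$ of elements of $A$ in distinct classes, each flanked by nonstandard gaps on both sides.

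Next I would assemble the two coordinates of the pattern. The failure of elimination of $\exists^{\infty}$ supplies a formula $\psi(x;z)$ and, in $\cM$ or a further extension, a parameter $c$ with $\psi(\cM;c)$ internally finite but of nonstandard size; its internal finiteness is exactly what yields a row of formulas that is $k$-inconsistent for large $k$, as an inp-pattern demands, and it is the source of the ``fine'' coordinate. The clusters of $A$ furnish the ``coarse'' coordinate: proximity to the class of $t_{j}$ is pairwise inconsistent across $j$, giving a second inconsistent row. The two are then glued by the group operation --- witnesses of the form $d+t_{j}$, with $d$ ranging in the fine set and $t_{j}$ selecting a class --- this being the natural source of two-dimensional freedom in an expansion of an abelian group. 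Before verifying the pattern I would replace all parameter rows by mutually indiscernible ones (extracting EM-types exactly as in the proof of \lemref{ict_pattern_with_bounded_errors_is_enough}), so that by \remref{for_inp_ptrn_with_indiscernibles_enough_first_column} and \factref{k_inp_eq_k_ict_under_nip} consistency of an arbitrary path reduces to consistency of a single column.

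The crux --- and the step I expect to be the main obstacle --- is establishing genuine independence of the two coordinates: that for every $\eta:2\to\omega$ the intended element $a_{\eta}$ exists and satisfies the prescribed formulas with only boundedly many spurious hits per row, so that \lemref{ict_pattern_with_bounded_errors_is_enough} applies. This is precisely where two-sidedness is indispensable. A one-sided configuration, such as an infinite subset of $\N$ inside the ordered group, does not decouple the two selections and only gives dp-rank $1$; it is the presence of nonstandard gaps on \emph{both} sides of every cluster that lets the coarse selection (which class) be made independently of the fine selection (where inside the gap), so that all $\omega\times\omega$ combinations are realized. Making this independence precise while simultaneously absorbing the non-uniform cluster sizes into the error constant $C$ of \lemref{ict_pattern_with_bounded_errors_is_enough} and into the indiscernible-array extraction is the delicate part; once it is in place, the resulting depth-$2$ ict-pattern contradicts $\kappa_{ict}=2$, and the lemma follows.
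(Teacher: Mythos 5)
Your skeleton is the right one --- witnesses of the form (element of the finite definable set) $+$ (cluster representative), with one row of formulas translating $A$ and one row translating the finite set --- and this is essentially the shape of the paper's pattern. But the proof is not actually carried out: the step you yourself flag as ``the main obstacle'' (verifying that every $\eta:2\to\omega$ is realized with at most $C$ spurious hits per row) is exactly the content of the lemma, and the particular route you propose for it does not go through. If you fix a single $2\times\omega$ array and aim to apply \lemref{ict_pattern_with_bounded_errors_is_enough}, you need one constant $C$ bounding the errors for \emph{all} columns simultaneously. The spurious hits in the row $\left\{ x\in A+r_{i}\right\} _{i<\omega}$ at the witness $r_{i}+t_{j}$ are the indices $k$ with $t_{j}+r_{i}-r_{k}\in A\cap\left(t_{j}+\mathbb{Z}\right)$, so they are controlled only by the size and diameter of the $j$-th cluster --- and nothing in the hypothesis bounds the cluster sizes uniformly; they may grow with $j$. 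For the same reason there is no single spacing of the ``fine'' elements that clears every cluster. So the bounded-error lemma, the indiscernible extraction, and the detour through inp-patterns are not just unnecessary here; they are the wrong tool for this configuration. (The Shelah expansion plays no role either: $A$ is already definable in $\cM$.)

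The paper's proof avoids all of this by producing \emph{exact} finite ict-patterns: for each $n$ it picks only $n$ clusters, normalizes each to start at its least element $a_{j}$, lets $M$ be one more than the largest diameter of these $n$ clusters, and then chooses a \emph{standard} parameter $b_{n}\in\mathbb{Z}$ with $nM\le\left\vert \phi(\mathbb{Z},b_{n})\right\vert <\infty$. Standardness matters: it makes $\phi(\cM,b_{n})=\phi(\mathbb{Z},b_{n})$ an honest finite set of integers, from which one selects $r_{1},\dots,r_{n}$ pairwise at distance at least $M$; then $r_{i}+a_{j}$ satisfies $x\in A+r_{i}$ and $\phi(x-a_{j},b_{n})$ and no other formula of either row, with no errors to absorb. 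Since $n$ is arbitrary, this contradicts dp-minimality. If you want to salvage your version, the repair is precisely this: let $M$ and the parameter depend on $n$, and prove the exact finite pattern rather than a bounded-error infinite one.
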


\begin{proof}
Assume otherwise. Let $\phi(x,y)$ be a formula witnessing the failure
of elimination of $\exists^{\infty}$, i.e. $\left\vert x\right\vert =1$
and for every $n\in\mathbb{N}$ there exists $b_{n}\in\mathbb{Z}$
such that $n\leq\left\vert \phi(\mathbb{Z},b_{n})\right\vert <\infty$.
Let $n\in\mathbb{N}$. We will build an ict-pattern of depth $2$
and length $n$. Let $a_{1},...,a_{n}$ be such that for each $i$,
$A\cap\left(a_{i}+\mathbb{Z}\right)$ is finite, and the sets $\left\{ A\cap\left(a_{i}+\mathbb{Z}\right)\,:\,1\leq i\leq n\right\} $
are different (i.e., these $a_{i}$ are in pairwise different cosets
of $\mathbb{Z}$). We may assume that $a_{i}$ is the minimal element
of $A\cap\left(a_{i}+\mathbb{Z}\right)$ (otherwise replace $a_{i}$
with this minimal element), and let $c_{i}$ be the maximal element
of $A\cap\left(a_{i}+\mathbb{Z}\right)$. Let $d_{i}:=c_{i}-a_{i}\in\mathbb{N}$,
and let $L:=\max\left\{ d_{i}\,:\,1\leq i\leq n\right\} +1\in\mathbb{N}$.
So for each $i$, $A\cap\left(a_{i}+\mathbb{Z}\right)\subseteq\left[a_{i},a_{i}+L-1\right]$.

Let $b_{n}\in\mathbb{Z}$ be such that $nL\leq\left\vert \phi(\mathbb{Z},b_{n})\right\vert <\infty$,
and denote $B_{n}:=\phi(\mathbb{Z},b_{n})$. Since $b_{n}\in\mathbb{Z}$
we have that $B_{n}=\phi(\cM,b_{n})$. Let $r_{1}$ be the minimal
element of $B_{n}$, and by recursion, for each $1\leq i\leq n-1$
choose $r_{i+1}$ to be the minimal element in $B_{n}$ such that
$r_{i+1}\geq r_{i}+L$. This $r_{i+1}$ exists because (as can be
proved by induction) there are at most $(i-1)L$ elements of $B_{n}$
below $r_{i}$. 

Consider the pattern
\begin{align*}
 & \left(A+r_{i}\right)_{1\leq i\leq n}\\
 & \left(B_{n}+a_{j}\right)_{1\leq j\leq n}
\end{align*}
Then for each pair $1\le i,j\le n$, $r_{i}+a_{j}\in A+r_{i}$ and
$r_{i}+a_{j}\in B_{n}+a_{j}$. But $r_{i}+a_{j}\notin B_{n}+a_{k}$
for $k\neq j$, because $B_{n}+a_{k}$ and $B_{n}+a_{j}$ are disjoint
(since $a_{j}$ and $a_{k}$ are in pairwise different cosets of $\mathbb{Z}$).
And $r_{i}+a_{j}\notin A+r_{k}$ for $k\neq i$: Suppose $r_{i}+a_{j}\in A+r_{k}$.
Then since $r_{i},r_{k}\in\mathbb{Z}$, $a_{j}+r_{i}-r_{k}\in A\cap\left(a_{j}+\mathbb{Z}\right)$.
Since $A\cap\left(a_{j}+\mathbb{Z}\right)\subseteq\left[a_{j},a_{j}+L-1\right]$,
we get $0\le r_{i}-r_{k}\le L-1$, contradicting the construction
of the $r_{i}$. So this is an ict-pattern of depth $2$ and length
$n$. As $n$ was arbitrary, this contradicts the dp-minimality of
$\mathcal{Z}$.
\end{proof}
\begin{rem}
\label{rem:remarks_on_adding_the_order}Let $\mathcal{Z}$ be an expansion
of $\left(\mathbb{Z},+,0,1\right)$. We note the following two simple
observations:
\begin{enumerate}
\item Let $\left(\cM,<\right)$ be an elementary extension of $\left(\cZ,<\right)$.
Then every nonempty definable subset which is bounded from below (resp.
above) has a minimum (resp. maximum). 
\item \label{enu:elim-exists-infty-gives-infinite-bndd-def-set-in-extension}Let
$\phi(x,y)$ be a formula with $\left\vert x\right\vert =1$ such
that for every $n\in\mathbb{N}$ there exists $b_{n}\in\mathbb{Z}$
such that $n\leq\left\vert \phi(\mathbb{Z},b_{n})\right\vert <\infty$.
Then in some elementary extension $\left(\cM,<\right)$ of $\left(\cZ,<\right)$
there exists $b$ such that $\phi(\cM,b)$ is infinite but bounded
from above and below. 
\end{enumerate}
\end{rem}

\begin{cor}
\label{cor:get-definable-set-infinite-bounded-below}Suppose $\mathcal{Z}$
is an expansion of $\left(\mathbb{Z},+,0,1\right)$ which is dp-minimal
and does not eliminate $\exists^{\infty}$, and let $\left(\cM,<\right)$
be an elementary extension of $\left(\cZ,<\right)$. Let $A$ be definable
in $\cM$ (i.e., without $<$), and suppose that $A$ is infinite
and bounded from below. Then there exists an $a\in A$ such that $A\cap\left(a+\mathbb{Z}\right)$
is infinite and bounded from below by $a$. 
\end{cor}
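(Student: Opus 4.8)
The plan is to combine the finiteness provided by \lemref{dp-minimality-prevents-large-two-sided-gaps} with the minimum principle of \remref{remarks_on_adding_the_order}(1). The idea is that $A$ has only finitely many elements lying in cosets of $\Z$ that meet $A$ in a finite set, so after deleting these finitely many ``bad'' points we obtain a definable, bounded-below set all of whose cosets meet it infinitely; the minimum of this pruned set then sits in a coset with exactly the desired properties.

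Concretely, first I would set $F:=\left\{ a\in A\,:\,A\cap(a+\Z)\text{ is finite}\right\}$. Since $A$ is definable in $\cM$ and $\cM$ is an elementary extension of $\cZ$, \lemref{dp-minimality-prevents-large-two-sided-gaps} applies and gives that $F$ is finite. Observe that $F$ is a union of full coset-intersections: if $a\in F$ then every element of $A\cap(a+\Z)$ also lies in $F$; conversely, if $a\notin F$ then $A\cap(a+\Z)$ is infinite and, since any $a'\in A\cap(a+\Z)$ has $A\cap(a'+\Z)=A\cap(a+\Z)$ infinite, the whole coset $a+\Z$ is disjoint from $F$.

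Next I would put $A':=A\setminus F$. As $F$ is finite it is definable (with parameters), so $A'$ is definable in $\cM$ and hence in $\left(\cM,<\right)$; it is infinite because $A$ is infinite and $F$ is finite, and it is bounded from below because $A$ is. By \remref{remarks_on_adding_the_order}(1), $A'$ has a minimum; call it $a$. Since $a\in A'$ we have $a\notin F$, so by the previous paragraph $A\cap(a+\Z)$ is infinite and $F\cap(a+\Z)=\emptyset$, whence $A\cap(a+\Z)=A'\cap(a+\Z)$. As $a=\min A'$, every element of $A'$, and in particular every element of $A\cap(a+\Z)$, is $\ge a$, so $A\cap(a+\Z)$ is infinite and bounded from below by $a$, as required.

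The only point that requires care, and which I expect to be the main (if modest) obstacle, is checking that the minimum $a$ of the pruned set witnesses a \emph{within-coset} lower bound and not merely a global one: that is, that no element $a-m$ of the same coset lies in $A$. This is exactly where $a\notin F$ is used, since any such $a-m$ would again lie in a coset meeting $A$ infinitely, hence in $A'$, contradicting the minimality of $a$. Here the coset structure of $\cM$ is what makes the two notions of ``below $a$'' coincide: elements in distinct cosets differ by a nonstandard amount, so the cosets are ordered blockwise, and ``below $a$ within $a+\Z$'' agrees with the global order inherited from $\left(\cM,<\right)$.
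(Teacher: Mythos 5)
Your proposal is correct and follows essentially the same route as the paper: apply \lemref{dp-minimality-prevents-large-two-sided-gaps} to get that the exceptional set $F$ is finite, pass to the definable, bounded-below set $A\setminus F$, and take its minimum via \remref{remarks_on_adding_the_order}(1). Your extra care about $F$ being a union of full coset-intersections (so that the minimum of $A\setminus F$ really bounds $A\cap\left(a+\mathbb{Z}\right)$ from below) makes explicit a detail the paper leaves implicit, but it is the same argument.
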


\begin{proof}
By \cref{lem:dp-minimality-prevents-large-two-sided-gaps}, there are
only finitely many elements $a\in A$ such that $A\cap\left(a+\mathbb{Z}\right)$
is finite. Let $F$ denote the set of all these elements. Then $A\backslash F$
is definable, nonempty, and bounded from below, and hence has a minimum
$a$. Then $A\cap\left(a+\mathbb{Z}\right)$ is infinite, but is bounded
from below by $a$. 
\end{proof}
\begin{cor}
\label{cor:thm1_implies_thm2} \cref{thm:new-dp-minimal-infinite-set-of-natural-numbers}
implies \cref{thm:dp-minimal-which-does-not-eliminate-exists-infty}.
\end{cor}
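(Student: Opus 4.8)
The plan is to manufacture an infinite definable subset of $\N$ and then invoke \thmref{new-dp-minimal-infinite-set-of-natural-numbers}; the whole difficulty is that the witnessing set will only appear in an elementary extension, which I would handle via the Shelah expansion. First I would fix a formula $\phi(x,y)$ with $\left|x\right|=1$ witnessing the failure of elimination of $\exists^{\infty}$, and use \remref{remarks_on_adding_the_order}(\ref{enu:elim-exists-infty-gives-infinite-bndd-def-set-in-extension}) to pass to an elementary extension $\left(\cM,<\right)$ of $\left(\cZ,<\right)$ in which $A:=\phi(\cM,b)$ is infinite and bounded on both sides for a suitable $b$. Since $A$ is definable without $<$, is infinite, and is bounded from below, \corref{get-definable-set-infinite-bounded-below} then yields an $a\in A$ with $A\cap\left(a+\Z\right)$ infinite and bounded from below by $a$. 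Writing $S:=\left\{ s\in\Z\,:\,a+s\in A\right\}$, the lower bound forces $S\subseteq\N$, and $S$ is infinite; this is the infinite subset of $\N$ I am after.

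The main obstacle is that $S$ lives only inside $\cM$: its defining parameters $a,b$ are nonstandard, so a priori $S$ is not definable in $\cZ$ itself. I would resolve this through the Shelah expansion. Setting $\psi(z;w_{1},w_{2}):=\phi(w_{1}+z,w_{2})$, one has $\psi(\Z;a,b)=S$, so $S$ is an externally definable subset of $\cZ$ and hence definable in $\cZ^{Sh}$. The structure $\cZ^{Sh}$ has universe $\Z$, defines the graph of $+$ and the constants $0,1$, and by \corref{shelah_expansion_preserves_dp_rank} has the same $\kappa_{ict}$ as $\cZ$; thus it is a dp-minimal expansion of $\left(\Z,+,0,1\right)$ defining the infinite set $S\subseteq\N$. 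Applying \thmref{new-dp-minimal-infinite-set-of-natural-numbers} to $\cZ^{Sh}$ then gives that $\cZ^{Sh}$ is interdefinable with $\left(\Z,+,0,1,<\right)$.

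Finally I would descend from $\cZ^{Sh}$ back to $\cZ$. Every $\cZ$-definable set is $\cZ^{Sh}$-definable, hence $\left(\Z,+,0,1,<\right)$-definable, so $\cZ$ is a reduct of $\left(\Z,+,0,1,<\right)$ and an expansion of $\left(\Z,+,0,1\right)$. \factref{no_intermediate_structures_between_the_group_and_the_order} then leaves only two possibilities, and I would rule out interdefinability with $\left(\Z,+,0,1\right)$ by noting that the latter eliminates $\exists^{\infty}$ — its one-variable definable sets are Boolean combinations of cosets of the subgroups $n\Z$, so the finite instances in any uniform family have bounded size — whereas $\cZ$ does not, this property depending only on the definable sets. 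Hence $\cZ$ must be interdefinable with $\left(\Z,+,0,1,<\right)$, as required. I expect the one genuinely delicate point to be the passage through $\cZ^{Sh}$: it is essential both that dp-minimality is preserved (\corref{shelah_expansion_preserves_dp_rank}) and that \thmref{new-dp-minimal-infinite-set-of-natural-numbers}, being about arbitrary dp-minimal expansions of the group on universe $\Z$, applies to $\cZ^{Sh}$ verbatim.
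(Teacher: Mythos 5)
Your proposal is correct and follows essentially the same route as the paper: the set $S=\left(A-a\right)\cap\Z$ is exactly the paper's $A^{\prime}$, and the passage through the Shelah expansion via \corref{shelah_expansion_preserves_dp_rank}, the application of \thmref{new-dp-minimal-infinite-set-of-natural-numbers} to $\cZ^{Sh}$, and the final use of \factref{no_intermediate_structures_between_the_group_and_the_order} all match the paper's argument. The only difference is that you spell out why $\left(\Z,+,0,1\right)$ eliminates $\exists^{\infty}$, which the paper leaves implicit.
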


\begin{proof}
Let $\mathcal{Z}$ be an expansion of $\left(\mathbb{Z},+,0,1\right)$
which is dp-minimal and does not eliminate $\exists^{\infty}$, and
let $\phi(x,y)$ be a formula with $\left\vert x\right\vert =1$ such
that for every $n\in\mathbb{N}$ there exists $b_{n}\in\mathbb{Z}$
such that $n\leq\left\vert \phi(\mathbb{Z},b_{n})\right\vert <\infty$.
By \cref{rem:remarks_on_adding_the_order} (\ref*{enu:elim-exists-infty-gives-infinite-bndd-def-set-in-extension}),
in some elementary extension $\left(\cM,<\right)$ of $\left(\cZ,<\right)$
there exists $b$ such that $A:=\phi(\cM,b)$ is infinite but bounded
from above and below. By \cref{cor:get-definable-set-infinite-bounded-below},
there is an $a\in A$ such that $A\cap\left(a+\mathbb{Z}\right)$
is infinite and bounded from below by $a$. So $A^{\prime}:=\left(A-a\right)\cap\mathbb{Z}$
is infinite and bounded from below by $0$ (i.e., $A^{\prime}\subseteq\mathbb{N}$).
The set $A^{\prime}$ is externally definable in $\cZ$, hence definable
in the Shelah expansion $\mathcal{Z}^{Sh}$. Since $\mathcal{Z}$
is dp-minimal, by \cref{cor:shelah_expansion_preserves_dp_rank} $\mathcal{Z}^{Sh}$
is also dp-minimal. By \cref{thm:new-dp-minimal-infinite-set-of-natural-numbers}
$\mathcal{Z}^{Sh}$ is interdefinable with $\left(\mathbb{Z},+,0,1,<\right)$,
so $\mathcal{Z}$ is a reduct of $\left(\mathbb{Z},+,0,1,<\right)$.
Since $\left(\mathbb{Z},+,0,1\right)$ is stable, $\left(\mathbb{Z},+,0,1\right)^{Sh}$
is interdefinable with $\left(\mathbb{Z},+,0,1\right)$, hence $\mathcal{Z}$
cannot be interdefinable with $\left(\mathbb{Z},+,0,1\right)$. By
\cref{fact:no_intermediate_structures_between_the_group_and_the_order},
$\mathcal{Z}$ is interdefinable with $\left(\mathbb{Z},+,0,1,<\right)$.
\end{proof}

\section{Proof of Theorem \texorpdfstring{\ref{thm:new-dp-minimal-infinite-set-of-natural-numbers}}{\ref{thm:new-dp-minimal-infinite-set-of-natural-numbers}}}

In this section we prove \cref{thm:new-dp-minimal-infinite-set-of-natural-numbers}.
The proof proceeds by considering several cases depending on properties
of $A$ or its definable subsets. We treat each case separately, sometimes
getting stronger results.

\subsection{The syndetic case}

This is the easiest case. Here we get that the order is definable.
\begin{defn}
A set $A\subseteq\N$ is called \emph{syndetic} (more precisely, \emph{syndetic
in $\N$}) if there is a finite set $F\subseteq\N$ such that $\N\subseteq A-F:=\bigcup_{n\in F}\left(A-n\right)$. 
\end{defn}

\begin{prop}
\label{prop:syndetic_defines_the_order}Let $\mathcal{Z}$ be an expansion
of $\left(\mathbb{Z},+,0,1\right)$, and suppose that $A\subseteq\N$
is definable in $\mathcal{Z}$ and is syndetic. Then $\mathcal{Z}$
defines $\N$, and hence defines the order $<$. 
\end{prop}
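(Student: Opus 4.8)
The plan is to show that from a syndetic definable $A\subseteq\N$ one can define $\N$ itself, and then recover $<$ from $\N$ using the group structure.

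First I would observe that defining $\N$ suffices to define $<$: once $\N$ is definable, I can define $x<y$ by the condition $y-x\in\N\setminus\{0\}$, which is a formula in $\left(\Z,+,0,1\right)$ together with the predicate for $\N$. So the real content is to define $\N$ from $A$.

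To define $\N$ from $A$, I would exploit syndeticity. By hypothesis there is a finite set $F\subseteq\N$ with $\N\subseteq A-F=\bigcup_{n\in F}(A-n)$, so every natural number $m$ can be written as $m=a-n$ for some $a\in A$ and $n\in F$. Since $A\subseteq\N$, this says that $\N$ and $A-F$ in fact coincide as subsets of $\Z$ after intersecting appropriately; more precisely, the symmetric difference between $\N$ and the definable set $A-F$ is controlled by finitely many small elements. The key point is that $A-F=\bigcup_{n\in F}\{a-n:a\in A\}$ is definable in $\mathcal{Z}$, being a finite union of translates of the definable set $A$ by the fixed integers $n\in F$ (each integer $n$ is a term in the language). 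Thus $A-F$ is a definable set that contains all of $\N$ but might dip finitely far below $0$. I would then intersect or subtract off the finitely many negative elements: the set $(A-F)\setminus\{a\in A-F:a-1\notin A-F \text{ and } a \text{ is ``low''}\}$, or more cleanly, I would define $\N$ as the largest ``downward-closed-from-some-point'' piece. Concretely, since $A-F\supseteq\N$ and $A-F$ differs from $\N$ only by finitely many integers below $0$ (each such integer being definable as a constant), I can write $\N=(A-F)\setminus D$ where $D=(A-F)\cap(\text{negatives})$ is a finite, hence definable (as an explicit finite union of constants $0-1,0-2,\dots$), set. The finitely many offending negative elements are each individually definable since every element of $\Z$ is named by a closed term, so their union is definable and can be removed.

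The main obstacle, and the step requiring care, is verifying that $A-F$ really equals $\N$ off a definable finite set: I must confirm that $A-F$ contains no infinite set of negative integers. This holds because $A\subseteq\N$ forces $A-F\subseteq\Z_{\ge -\max F}$, so $A-F$ is bounded below by $-\max F$, and hence $(A-F)\setminus\N$ is contained in the finite interval $\{-\max F,\dots,-1\}$; every element of this finite interval is a definable constant, so the finite set to be removed is definable. Putting this together, $\N=(A-F)\setminus\bigcup_{k=1}^{\max F}\{-k\}$ is definable in $\mathcal{Z}$, and therefore $<$ is definable as described above.
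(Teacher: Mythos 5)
Your proof is correct and takes essentially the same route as the paper: both observe that $A-F=\bigcup_{n\in F}(A-n)$ is definable, contains $\N$, and is bounded below by $-\max F$, so $\N$ is obtained by removing the finitely many (constant-definable) negative elements, after which $<$ is definable from $\N$ via the group structure. No gaps; if anything, your use of $\max F$ for the lower bound is the correct choice where the paper's text writes $\min F$.
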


\begin{proof}
By definition, there is a finite set $F\subseteq\N$ such that $\N\subseteq\bigcup_{n\in F}\left(A-n\right)$.
Let $m:=\min F$. So $\bigcup_{n\in F}\left(A-n\right)\subseteq\left[-m,\infty\right)$,
hence $\N=\left(\bigcup_{n\in F}\left(A-n\right)\right)\backslash\left[-m,-1\right]$,
and this set is definable in $\mathcal{Z}$.
\end{proof}

\subsection{The case of bounded two-sided gaps}

In this case we actually get IP. Unlike the other cases, here we make
assumptions about the combinatorial properties of all the infinite
definable subsets of $A$, not only of $A$ itself.
\begin{defn}
Let $A\subseteq\Z$. We say that $N\in\N$ is a \emph{bound on the
two-sided gaps of $A$} if for every $x\in A$ there exists $d\in\left[-N,-1\right]\cup\left[1,N\right]$
such that $x+d\in A$. We say that \emph{$A$ has bounded two-sided
gaps} if there exists a bound $N$ on the two-sided gaps of $A$.
\end{defn}

\begin{defn}
Let $A\subseteq\Z$.
\begin{enumerate}
\item Define a function $L_{A}:\Z\rightarrow\N\cup\{\infty\}$ by
\[
L_{A}(y):=\sup\left\{ m\in\N\,:\,\left[y-m,y-1\right]\cap A=\emptyset\right\} 
\]
\item Define a function $L_{A}:P(\Z)\rightarrow\N\cup\{\infty,-\infty\}$
by
\[
L_{A}(B):=\sup\left\{ L_{A}(y)\,:\,y\in B\text{ and }y>\inf A\right\} 
\]
\end{enumerate}
\end{defn}

\begin{rem}
~
\begin{enumerate}
\item For all $y$, for $m=0$ we have $\left[y-m,y-1\right]=\left[y,y-1\right]=\emptyset$,
so $L_{A}(y)\ge0$. 
\item If $\min A$ exists and $y\le\min A$, then $L_{A}(y)=\infty$. Otherwise,
$L_{A}(y)<\infty$.
\item For all $B\subseteq\Z$, by the above, the set $\left\{ L_{A}(y)\,:\,y\in B\text{ and }y>\inf A\right\} $
is contained in $\N$ (i.e., it does not contain $\infty$). Hence,
if $L_{A}(B)=\infty$ then $B$ must be infinite.
\item If $A\subseteq\N$ is infinite and not syndetic in $\N$ then $L_{A}(A)=\infty$. 
\end{enumerate}
\end{rem}

\begin{lem}
\label{lem:recursion-step-for-bounded-two-sided-gaps}Let $A^{\prime}\subseteq A\subseteq\N$,
and suppose that $L_{A}(A^{\prime})=\infty$ and that $A^{\prime}$
has bounded two-sided gaps. Then there exists $d\in\N$ such that
the set
\[
B_{d}:=\left\{ y\in A^{\prime}\,:\,\left[y-2d,y-1\right]\cap A=\emptyset\text{ and }\left[y+1,y+d\right]\cap A^{\prime}=\{y+d\}\right\} 
\]
has $L_{A}(B_{d})=\infty$.
\end{lem}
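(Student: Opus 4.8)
The plan is to argue by a pigeonhole on the possible right-neighbor distances in $A'$. Fix a bound $N_{0}\in\N$ on the two-sided gaps of $A'$, so that every $x\in A'$ has some $x'\in A'$ with $0<|x-x'|\le N_{0}$. The key observation is that at a point $y\in A'$ with a very large left-gap in $A$, this close neighbor is forced to lie on the \emph{right}, and since the distance to it ranges over the finite set $\{1,\dots,N_{0}\}$, a single value $d$ must recur along a sequence of $y$'s with $L_{A}(y)\to\infty$. That $d$ will be the one we want.

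Concretely, I would proceed as follows. Since $L_{A}(A')=\infty$, for each $N\in\N$ I can pick $y\in A'$ with $y>\inf A$ and $L_{A}(y)\ge N$; by definition of $L_{A}$ this means $\left[y-N,\,y-1\right]\cap A=\emptyset$. Take $N>N_{0}$. Because $A'\subseteq A$, the interval $\left[y-N_{0},\,y-1\right]$ contains no element of $A'$, so the neighbor guaranteed by the bounded two-sided gaps of $A'$ must satisfy $y+d\in A'$ for some $d\in\left[1,N_{0}\right]$. Setting $d_{y}:=\min\{d\ge1:y+d\in A'\}\in\left[1,N_{0}\right]$, minimality gives $\left[y+1,\,y+d_{y}\right]\cap A'=\{y+d_{y}\}$. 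Now I run this for every $N>N_{0}$ and apply the Pigeonhole Principle to the map $N\mapsto d_{y}$, whose range is the finite set $\left[1,N_{0}\right]$: some fixed $d\in\left[1,N_{0}\right]$ occurs for infinitely many $N$, hence for a sequence of points $y\in A'$, all with $y>\inf A$, along which $L_{A}(y)\ge N\to\infty$.

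Finally I verify that these points witness $L_{A}(B_{d})=\infty$. For each such $y$, discarding the finitely many with $N<2d$, I have $L_{A}(y)\ge N\ge 2d$, which gives $\left[y-2d,\,y-1\right]\cap A=\emptyset$; combined with $y\in A'$ and $\left[y+1,\,y+d\right]\cap A'=\{y+d\}$, all three defining conditions of $B_{d}$ hold, so $y\in B_{d}$ and $y>\inf A$. Since $L_{A}(y)\ge N$ is unbounded along this sequence, $L_{A}(B_{d})=\sup\{L_{A}(y):y\in B_{d},\,y>\inf A\}=\infty$, as required. The one point needing care — the main obstacle — is the quantifier order: a priori the distance to the right-neighbor could vary with $y$, and it is exactly the finiteness of $\left[1,N_{0}\right]$ (supplied by the bounded two-sided gaps of $A'$) that lets pigeonhole extract a single $d$ valid for unboundedly large left-gaps; the remaining verifications are immediate from the definitions once $N\ge 2d$.
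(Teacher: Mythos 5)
Your proof is correct and follows essentially the same route as the paper's: both use the bound on the two-sided gaps of $A'$ together with a large left-gap to force a nearest right-neighbor at distance $d$ in a fixed finite range, and then pigeonhole over that range to extract a single $d$ witnessed by points with unbounded $L_{A}$-values. The paper packages this as a finite cover $C\subseteq\bigcup_{d=1}^{N}D_{d}$ with $D_{d}\subseteq B_{d}$, but the content is identical.
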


\begin{proof}
Let $N$ be a bound on the two-sided gaps of $A^{\prime}$. Let 
\[
C:=\left\{ y\in A^{\prime}\,:\,\left[y-2N,y-1\right]\cap A=\emptyset\right\} =\left\{ y\in A^{\prime}\,:\,2N\le L_{A}(y)\right\} 
\]

Since $L_{A}(A^{\prime})=\infty$, also $L_{A}(C)=\infty$. For each
$d\in\left[1,N\right]$ let 
\[
D_{d}:=\left\{ y\in C\,:\,\left[y+1,y+d\right]\cap A^{\prime}=\{y+d\}\right\} \subseteq B_{d}
\]

By the choice of $N$ and the definition of $C$, for every $y\in C$
there exists $d\in\left[1,N\right]$ such that $y+d\in A^{\prime}$.
Let $d_{y}$ be the first such $d$. Then $y\in D_{d_{y}}$. So $C\subseteq\bigcup_{d=1}^{N}D_{d}$,
and hence there is $d\in\left[1,N\right]$ for which $L_{A}(D_{d})=\infty$,
so $L_{A}(B_{d})=\infty$. 
\end{proof}
\begin{prop}
\label{prop:bounded_two_sided_gaps_gives_IP}Let $A\subseteq\N$ be
infinite and not syndetic, and let $\mathcal{Z}:=\left(\mathbb{Z},+,0,1,A\right)$.
Suppose that every infinite subset of $A$ that is definable in $\mathcal{Z}$
has bounded two-sided gaps. Then the formula $y-x\in A$ has IP. 
\end{prop}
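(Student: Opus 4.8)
The plan is to specialize to the concrete structure $\cZ=(\Z,+,0,1,A)$ and to prove that the family of translates $\{A+t:t\in\Z\}$ shatters arbitrarily large finite sets. Since $y-x\in A$ holds exactly when $y\in A+x$, saying that this family has infinite VC dimension is the same as saying that $y-x\in A$ has IP, so this is the right target. Before starting I would record the two structural inputs I am allowed to use: as an infinite subset of itself, $A$ has bounded two-sided gaps by hypothesis; and since $A$ is infinite and not syndetic, the last item of the preceding remark gives $L_A(A)=\infty$.

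Next I would build the combinatorial skeleton by iterating \lemref{recursion-step-for-bounded-two-sided-gaps}. Starting from $A_0:=A$, which has $L_A(A_0)=\infty$ and bounded two-sided gaps, the lemma produces $d_1$ and an infinite definable $A_1\subseteq A_0$ with $L_A(A_1)=\infty$; being infinite and definable, $A_1$ again has bounded two-sided gaps by hypothesis, so the lemma applies once more, and so on. This yields a decreasing chain of infinite definable sets $A_0\supseteq A_1\supseteq\cdots$ together with gaps $d_1,d_2,\dots$ such that every $y\in A_m$ satisfies $[y-2d_m,y-1]\cap A=\emptyset$ and has its $A_{m-1}$-successor exactly at $y+d_m$. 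Comparing successors across $A_{m-1}\supseteq A_m$ forces $d_1\le d_2\le\cdots$. The crucial feature is that $L_A(A_m)=\infty$ holds at every level, so I can always locate points of $A_m$ preceded by arbitrarily long empty stretches; this, rather than any growth of the $d_m$, will be the source of the scale separation needed below.

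For a fixed point $v\in A_m$ with a very large left gap I would then read off the local geometry: for each $k\le m$ there is a tooth $v+d_k\in A$, and immediately to the left of each tooth an empty strip of width $2d_{k-1}$, forced by $v+d_k\in A_{k-1}$. The teeth are the designated ``in'' positions and the strips the ``out'' positions, and the factor of $2$ in the left-gap condition is exactly what guarantees that sliding a mark by $d_k$ drops it inside such a strip rather than onto the neighbouring tooth. These two features are the atomic building block; the task is to assemble them so that, for every $S\subseteq[n]$, some translate of $A$ meets the chosen marks in precisely $S$, shattering an $n$-element set and, as $n\to\infty$, yielding IP.

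I expect the real difficulty to be this last step: realising all $2^{n}$ patterns at once, and in particular the ``mixed'' ones where a low mark is in $A$ while a higher mark is out. A single translate moves every mark together, so one-sided data (only left gaps) cannot by itself separate such patterns. My approach would be a two-scale induction on $m$: assuming the level-$m$ configuration shatters $m$ marks inside any window based at a point of $A_m$ with left gap $\ge 2d_m$, I add an $(m{+}1)$-st coordinate using two copies of that configuration based at points $v_L,v_H\in A_{m+1}$ separated by a huge empty gap (available because $L_A(A_{m+1})=\infty$), with the new parameter parked inside a long empty region so that all small, block-scale shifts leave it out of $A$. The new coordinate is then toggled by a single large shift of size comparable to $v_H-v_L$, which carries the old coordinates from one copy into the other, where the inductively available small shifts realise $S\cap[m]$; the huge separation keeps the large and small scales from interfering, and the factor-$2$ strips make every ``off'' placement provably land outside $A$. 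Getting the signs and ranges of these shifts to line up so that the non-interference is exact — rather than merely plausible — is the main thing to nail down.
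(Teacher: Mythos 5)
Your setup coincides with the paper's: you iterate \lemref{recursion-step-for-bounded-two-sided-gaps}, using the hypothesis at each stage to recover bounded two-sided gaps, and obtain the nested infinite definable sets $A=A_{0}\supseteq A_{1}\supseteq\cdots$ with $L_{A}(A_{n})=\infty$ together with the distances $d_{n}$. The gap is in the assembly of the shattered configuration, which is the heart of the proof and which you yourself flag as ``the main thing to nail down''; moreover, your plan points away from the fact that actually closes the argument. You extract only the monotonicity $d_{1}\le d_{2}\le\cdots$ and announce that the scale separation will come from $L_{A}(A_{m})=\infty$ ``rather than any growth of the $d_{m}$.'' But the construction forces genuine geometric growth, and this is exactly what the factor $2$ in the left-gap condition is for: if $y\in A_{n+1}$ then $y\in A$ and $y+d_{n}\in A_{n}$, so the strip $\left[(y+d_{n})-2d_{n-1},(y+d_{n})-1\right]$ misses $A$ and hence misses $y$, giving $d_{n}>2d_{n-1}$ and therefore $d_{n}>\sum_{k<n}d_{k}$. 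With that in hand no induction on ``copies'' is needed: fix a single base point $b\in A_{N+1}$, set $b_{s}:=b+\sum_{k\in s}d_{k}$, check by downward induction (using the successor property) that $b_{s}\in A_{m}$ whenever $s\subseteq[m,N]$, and conclude that $b_{s}-d_{m}\in A$ iff $m\in s$, the ``off'' case landing in the left strip of $b_{s\cap[m+1,N]}$ precisely because $0<d_{m}-\sum_{k\in s,\,k<m}d_{k}\le d_{m}<2d_{m}$.

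Your proposed substitute mechanism also has concrete defects beyond being unexecuted. First, choosing $v_{L},v_{H}\in A_{m+1}$ separated by a huge empty gap makes the toggle $T=v_{H}-v_{L}$ depend on the particular pair, so the inductive statement ``the level-$m$ configuration works at any base point of $A_{m}$ with a large enough left gap'' does not propagate: the gaps between consecutive points of $A_{m+1}$ are not uniform, only the distance to the $A_{m}$-successor is. Second, turning the new coordinate on by the large shift $T$ and then realising $S\cap[m]$ by a further small shift $t_{S'}$ moves the new mark as well, so you need $p_{m+1}-T-t_{S'}\in A$ simultaneously for all $2^{m}$ small shifts $t_{S'}$; parking $p_{m+1}$ in a long empty region disposes of the ``off'' conditions but does nothing to arrange these $2^{m}$ ``on'' conditions, and nothing in the structure you have isolated supplies such a point. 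So as written the proposal does not yield a proof, although the missing step is recoverable from your own skeleton once the inequality $d_{n}>2d_{n-1}$ is observed.
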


\begin{proof}
Since $A$ is infinite and not syndetic, $L_{A}(A)=\infty$. We define
recursively a decreasing sequence $\left\{ A_{n}\right\} _{n=0}^{\infty}$
of infinite definable subsets of $A$, and a sequence of positive
integers $\left\{ d_{n}\right\} _{n=0}^{\infty}$, such that for all
$n$, $L_{A}(A_{n})=\infty$, and for all $n\ge1$, $d_{n}>2d_{n-1}$.
\\
Let $A_{0}=A$. Suppose that $A_{n}$ has been constructed, and is
an infinite definable subset of $A$ with $L_{A}(A_{n})=\infty$.
By the assumption on $\mathcal{Z}$, $A_{n}$ has bounded two-sided
gaps. By \cref{lem:recursion-step-for-bounded-two-sided-gaps}, there
exists $d\in\N$ such that the set 
\[
B_{n,d}:=\left\{ y\in A_{n}\,:\,\left[y-2d,y-1\right]\cap A=\emptyset\text{ and }\left[y+1,y+d\right]\cap A_{n}=\{y+d\}\right\} 
\]
has $L_{A}(B_{n,d})=\infty$. Let $d_{n}$ be the first such $d$,
and let $A_{n+1}:=B_{n,d_{n}}$. So $A_{n+1}\subseteq A_{n}$ is an
infinite definable subset of $A$ with $L_{A}(A_{n+1})=\infty$. Let
$y\in A_{n+1}$. So $y\in A_{n}$ and also $y+d_{n}\in A_{n}$. So
on one hand, $(y+d_{n})-d_{n}=y\in A_{n}\subseteq A$. On the other
hand, by the definition of $A_{n}$, because $y+d_{n}\in A_{n}$ we
get $\left[(y+d_{n})-2d_{n-1},(y+d_{n})-1\right]\cap A=\emptyset$.
Hence $d_{n}>2d_{n-1}$. Note that it follows that for all $n$, $d_{n}>\sum_{k<n}d_{k}$.

~\\
Let $\psi(x,y)$ be the formula $y-x\in A$. Let $N\ge1$. Let $b\in A_{N+1}$,
and for each $S\subseteq\left[0,N\right]$ define 
\[
b_{S}:=b+\sum_{k\in S}d_{k}
\]

First we show, by backward induction on $0\le m\le N+1$, that if
$S\subseteq\left[m,N\right]$ then $b_{S}\in A_{m}$. For $m=N+1$,
$S=\emptyset$ so $b_{S}=b\in A_{N+1}$. Suppose that the claim is
true for some $m\ge1$. Let $S\subseteq\left[m-1,N\right]$, and let
$S^{\prime}:=S\backslash\{m-1\}\subseteq\left[m,N\right]$. So $b_{S}\in\left\{ b_{S^{\prime}},b_{S^{\prime}}+d_{m-1}\right\} $.
By the induction hypothesis $b_{S^{\prime}}\in A_{m}$, hence, by
the definition of $A_{m}$, $b_{S^{\prime}},b_{S^{\prime}}+d_{m-1}\in A_{m-1}$,
so $b_{S}\in A_{m-1}$.

Now, let $S\subseteq\left[0,N\right]$, and let $m\in\left[0,N\right]$.
If $m\in S$, then for $S^{\prime}:=S\backslash\{m\}$ we have $b_{S}-d_{m}=b_{S^{\prime}}\in A_{0}=A$,
so $\mathcal{Z}\vDash\psi(d_{m},b_{S})$. If $m\notin S$, then for
$S_{1}=S\cap\left[m+1,N\right]$ and $S_{2}=S\cap\left[0,m-1\right]$
we have 
\[
b_{S}-d_{m}=b_{S_{1}}-d_{m}+\sum_{k\in S_{2}}d_{k}
\]
Since $d_{m}>\sum_{k<m}d_{k}$, we have $b_{S_{1}}-d_{m}\le b_{S}-d_{m}\le b_{S_{1}}-d_{m}+\sum_{k<m}d_{k}<b_{S_{1}}$,
so $b_{S}-d_{m}\in\left[b_{S_{1}}-d_{m},b_{S_{1}}-1\right]$. But
since $S_{1}\subseteq\left[m+1,N\right]$ we have $b_{S_{1}}\in A_{m+1}$,
and hence $\left[b_{S_{1}}-2d_{m},b_{S_{1}}-1\right]\cap A=\emptyset$.
Therefore $b_{S}-d_{m}\notin A$, and so, $\mathcal{Z}\nvDash\psi(d_{m},b_{S})$.

In conclusion, we showed that for all $m\in\left[0,N\right]$ and
$S\subseteq\left[0,N\right]$,
\[
\mathcal{Z}\vDash\psi(d_{m},b_{S})\iff m\in S
\]
This is true for all $N$, therefore $\psi(x,y)$ has IP.
\end{proof}

\subsection{\label{subsec:The-case-of-large-doubling}The case of large doubling}

Here we consider what happens when sumsets of $A\cap\left[0,n\right]$
are, asymptotically in $n$, as large as possible relative to $A\cap\left[0,n\right]$.
For our purposes it is enough to consider sums of two elements, but
the same proof works for sums of $k\ge2$ elements, giving $\mbox{dp-rank}\ge k$.

In this subsection, let $\left(G,+,0\right)$ be an abelian group.
\begin{defn}
\label{def:notations_sumsets_popularity}For $k\ge2$, $K\in\N$ and
sets $A_{1},\dots,A_{k}\subseteq G$, denote: 
\begin{enumerate}
\item $A_{1}+\dots+A_{k}:=\left\{ a_{1}+\dots+a_{k}\,:\,a_{i}\in A_{i}\text{ for all }1\le i\le k\right\} $.
\item $k\cdot A:=\underbrace{A+A+\dots+A}_{k\text{ times}}$. Also denote
$1\cdot A:=A$. 
\item $r_{A_{1},\dots,A_{k}}^{k}(x):=\left\vert \{(a_{1},\dots,a_{k})\in A_{1}\times\dots\times A_{k}\,:\,x=a_{1}+\dots+a_{k}\}\right\vert $.
This is the number of ways $x$ can be represented as a sum of $k$
elements, one from each of $\left\{ A_{i}\right\} _{i=1}^{k}$. Note
that the order matters, so, e.g., if $a\neq b$ then $(a,b)$ and
$(b,a)$ are considered to be different representations.
\item $r_{A}^{k}(x):=r_{A,\dots,A}^{k}(x)$.
\item $D_{K}^{k}(A_{1},\dots,A_{k}):=\{x\in A_{1}+\dots+A_{k}\,:\,r_{A_{1},\dots,A_{k}}^{k}(x)\ge K\}$.
\item $D_{K}^{k}(A):=D_{K}^{k}(A,\dots,A)$.
\end{enumerate}
\end{defn}

\begin{rem}
If $\mathcal{G}$ is an expansion of $\left(G,+,0\right)$, and $A_{1},\dots,A_{k}\subseteq G$
are definable in $\mathcal{G}$, then $A_{1}+\dots+A_{k}$ is definable
in $\mathcal{G}$, and for each $K$, $D_{K}^{k}(A_{1},\dots,A_{k})$
is definable in $\mathcal{G}$.
\end{rem}

The following observation is trivial. We state it explicitly to make
its uses clearer.
\begin{obs}
\label{obs:counting_all_reprs_as_sum_of_k}Let $A\subseteq G$ be
a finite set, and let $k\ge2$. Then 
\[
\sum_{x\in k\cdot A}r_{A}^{k}(x)=\left\vert A\right\vert ^{k}
\]
\end{obs}

\begin{prop}
\label{prop:unpopular_k_sumset_gives_dp_rank_atleast_k}Let $\mathcal{G}$
be an expansion of $\left(G,+,0\right)$, and let $A\subseteq G$
be infinite and definable in $\mathcal{G}$. Let $k\ge2$ and $K\in\N$,
and suppose that for all $n\in\N$ there are subsets $B_{n,1},\dots,B_{n,k}\subseteq A$,
each of size at least $n$, such that $B_{n,1}+\dots+B_{n,k}\subseteq\left(k\cdot A\right)\backslash D_{K}^{k}(A)$.
Then $\text{dp-rank}(\mathcal{G})\ge k$.
\end{prop}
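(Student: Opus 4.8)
The plan is to build, for each $n$, a pattern of depth $k$ and width $n$ with a \emph{bounded} number of off-diagonal errors, and then package these into a single $\omega$-width pattern to which \lemref{ict_pattern_with_bounded_errors_is_enough} applies. The single formula used in every row is $\phi(x;y):=\big(x-y\in(k-1)\cdot A\big)$, which is definable in $\mathcal{Z}$ and has $\left\vert x\right\vert=1$. For fixed $n$, choose distinct elements $\beta_\alpha^{(0)},\dots,\beta_\alpha^{(n-1)}\in B_{n,\alpha}$ for each $1\le\alpha\le k$, set the column parameters $b_{\alpha,i}:=\beta_\alpha^{(i)}$, and for $\eta:\{1,\dots,k\}\to\{0,\dots,n-1\}$ set $a_\eta:=\sum_{\alpha=1}^{k}\beta_\alpha^{(\eta(\alpha))}$. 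Thus $a_\eta$ ranges over $B_{n,1}+\dots+B_{n,k}$, and the guiding idea is that the $\alpha$-th summand in the chosen decomposition of $a_\eta$ records the value $\eta(\alpha)$.

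The diagonal entries are immediate: $a_\eta-b_{\alpha,\eta(\alpha)}=\sum_{\alpha'\neq\alpha}\beta_{\alpha'}^{(\eta(\alpha'))}\in(k-1)\cdot A$, so $\models\phi(a_\eta;b_{\alpha,\eta(\alpha)})$ for every $\alpha$. The crux is the error bound, and this is exactly where the hypothesis is used. Fix a row $\alpha$. If $\models\phi(a_\eta;b_{\alpha,i})$ then $a_\eta-\beta_\alpha^{(i)}\in(k-1)\cdot A$; since $\beta_\alpha^{(i)}\in A$, this exhibits $\beta_\alpha^{(i)}$ as one summand in an ordered representation of $a_\eta$ as a sum of $k$ elements of $A$. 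Now $a_\eta\in B_{n,1}+\dots+B_{n,k}\subseteq(k\cdot A)\setminus D_K^k(A)$, so $r_A^k(a_\eta)\le K-1$; hence at most $k(K-1)$ distinct values occur as coordinates among all ordered representations of $a_\eta$. As the $\beta_\alpha^{(i)}$ are distinct, at most $k(K-1)$ indices $i$ satisfy $\phi(a_\eta;b_{\alpha,i})$. Setting $C:=k(K-1)$, for every $\eta$ and every $\alpha$ the off-diagonal error count $\left\vert\{i:i\neq\eta(\alpha)\text{ and }\models\phi(a_\eta;b_{\alpha,i})\}\right\vert$ is at most $C$, a bound independent of $n$.

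Finally, I would assemble these finite patterns into a single bounded-error pattern of depth $k$ with $\omega$ columns in a model of $Th(\mathcal{Z})$. Introduce constants $\hat b_{\alpha,i}$ ($\alpha<k$, $i<\omega$) and $\hat a_\eta$ ($\eta:k\to\omega$), and consider the type asserting the diagonal conditions $\phi(\hat a_\eta;\hat b_{\alpha,\eta(\alpha)})$ together with, for each $\alpha$ and each $(C+1)$-element set $S\subseteq\omega\setminus\{\eta(\alpha)\}$, the sentence $\neg\bigwedge_{i\in S}\phi(\hat a_\eta;\hat b_{\alpha,i})$. Any finite fragment mentions only finitely many columns and finitely many $\eta$, so it is satisfied in $\mathcal{Z}$ itself by taking $n$ larger than all indices involved and using the finite pattern above; by compactness the whole type is realized in some $\cM\models Th(\mathcal{Z})$ (an ultrapower of $\mathcal{Z}$ would serve equally well). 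The resulting array satisfies the hypotheses of \lemref{ict_pattern_with_bounded_errors_is_enough} with error constant $C$, so there is an ict-pattern of depth $k$, i.e.\ $\kappa_{ict}>k$ and $\mbox{dp-rank}(\mathcal{Z})\ge k$. The main obstacle is the error bound of the second paragraph: everything else is bookkeeping, but controlling the off-diagonal satisfactions — which genuinely occur whenever some $\beta_\alpha^{(i)}$ happens to be a summand of an alternative representation of $a_\eta$ — is precisely what forces the small-doubling hypothesis and what makes the bounded-error form of the ict-pattern lemma necessary.
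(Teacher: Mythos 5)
Your proposal is correct and follows essentially the same route as the paper: the same row formula $x-y\in(k-1)\cdot A$, the same choice $a_\eta=\sum_\alpha b_{\alpha,\eta(\alpha)}$, an error bound coming from $r_A^k(a_\eta)<K$ (the paper gets the slightly sharper constant $K$ in place of your $k(K-1)$, which is immaterial), and a compactness step feeding into \lemref{ict_pattern_with_bounded_errors_is_enough}. The only cosmetic difference is that the paper first transfers the array of parameters to an elementary extension and then verifies the error bound there, whereas you verify it in the finite patterns and transfer afterwards.
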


\begin{proof}
Consider the following set of formulas:
\begin{align*}
\Phi_{1}:= & \left\{ y_{\alpha,i}\neq y_{\alpha,j}\,:\,1\le\alpha\le k,\,i,j<\omega,\,i\neq j\right\} \\
\Phi_{2}:= & \left\{ y_{\alpha,i}\in A\,:\,1\le\alpha\le k,\,i<\omega\right\} \\
\Phi_{3}:= & \left\{ y_{1,i_{1}}+\dots+y_{k,i_{k}}\notin D_{K}^{k}(A)\,:\,i_{1},\dots,i_{k}\in\omega\right\} \\
\Phi:= & \Phi_{1}\cup\Phi_{2}\cup\Phi_{3}
\end{align*}

By the assumption, $\Phi$ is finitely satisfiable. By compactness,
there is an elementary extension $\mathcal{G}^{*}$ of $\mathcal{G}$
in which this set is realized. Let $\left(b_{\alpha,i}\,:\,1\le\alpha\le k,\,i<\omega\right)$
be a realization, and let $A^{*}$ denote the interpretation inside
$\mathcal{G}^{*}$ of the formula defining $A$ in $\mathcal{G}$.
Consider the pattern:

\[
\left\{ x-b_{\alpha,i}\in(k-1)\cdot A^{*}\,:\,1\le\alpha\le k,\,i<\omega\right\} \text{,}
\]

and for each $\eta:\left\{ 1,\dots,k\right\} \rightarrow\omega$,
let $a_{\eta}:=b_{1,\eta(1)}+\dots+b_{k,\eta(k)}$. Clearly, for each
$1\le\alpha\le k$, 
\[
a_{\eta}-b_{\alpha,\eta(\alpha)}=b_{1,\eta(1)}+\dots+b_{\alpha-1,\eta(\alpha-1)}+b_{\alpha+1,\eta(\alpha+1)}+\dots+b_{k,\eta(k)}\in(k-1)\cdot A^{*}
\]
For each $b\in A^{*}$ such that $a_{\eta}-b\in(k-1)\cdot A^{*}$,
fix a representation $a_{\eta}-b=c_{\eta,b,1}+\dots+c_{\eta,b,k-1}$
with $c_{\eta,b,1},\dots,c_{\eta,b,k-1}\in A^{*}$. So if $b_{1},b_{2}\in A^{*}$,
$b_{1}\neq b_{2}$, and $a_{\eta}-b_{1},a_{\eta}-b_{2}\in(k-1)\cdot A^{*}$,
then $a_{\eta}=c_{\eta,b_{1},1}+\dots+c_{\eta,b_{1},k-1}+b_{1}$ and
$a_{\eta}=c_{\eta,b_{2},1}+\dots+c_{\eta,b_{2},k-1}+b_{2}$ are two
different representations of $a_{\eta}$ as a sum of $k$ elements
from $A^{*}$ (where two representations, $c_{1}+\dots+c_{k}$ and
$c_{1}^{\prime}+\dots+c_{k}^{\prime}$ are considered as equal if
and only if the tuples $\left(c_{1},\dots,c_{k}\right)$ and $\left(c_{1}^{\prime},\dots,c_{k}^{\prime}\right)$
are equal). Hence $r_{A^{*}}^{k}(a_{\eta})\ge\left\vert \left\{ b\in A^{*}\,:\,a_{\eta}-b\in(k-1)\cdot A^{*}\right\} \right\vert $.
On the other hand, $a_{\eta}=b_{1,\eta(1)}+\dots+b_{k,\eta(k)}\notin D_{K}^{k}(A^{*})$,
hence $r_{A^{*}}^{k}(a_{\eta})<K$. Therefore, for each $1\le\alpha\le k$,
we have
\begin{align*}
 & \left\vert \left\{ i<\omega\,:\,a_{\eta}-b_{\alpha,i}\in(k-1)\cdot A^{*}\right\} \right\vert \le\\
 & \le\left\vert \left\{ b\in A^{*}\,:\,a_{\eta}-b\in(k-1)\cdot A^{*}\right\} \right\vert <K
\end{align*}
So the pattern $\left\{ x-b_{\alpha,i}\in(k-1)\cdot A^{*}\,:\,1\le\alpha\le k,\,i<\omega\right\} $
satisfies the assumption of \cref{lem:ict_pattern_with_bounded_errors_is_enough},
therefore $\kappa_{ict}(\mathcal{G})>k$, and so $\mbox{dp-rank}(\mathcal{G})\ge k$. 
\end{proof}
For the rest of this subsection, suppose that $\left(G,+,0\right)$
is finitely generated. So we may assume that $G=\Z^{d}\times F$ where
$1\le d\in\N$ and $\left(F,+,0\right)$ is a finite abelian group.

For an element $a\in G$, write $a=\left(a^{1},a^{2},\dots,a^{d},a^{F}\right)$
where $a^{1},\dots,a^{d}\in\Z$ and $a^{F}\in F$, and define $\left\Vert a\right\Vert :=\max\left\{ \left\vert a^{1}\right\vert ,\dots,\left\vert a^{d}\right\vert \right\} $.
We denote $G^{+}:=\N^{d}\times F$, and for a set $A\subseteq G^{+}$
and for $n\in\N$, we denote $A_{<n}:=\left\{ a\in A\,:\,\left\Vert a\right\Vert <n\right\} $.
In particular, $G_{<n}^{+}:=\left\{ a\in G^{+}\,:\,\left\Vert a\right\Vert <n\right\} $.
\begin{defn}
Let $k\ge2$ and $0<c\le1$.
\begin{enumerate}
\item For a finite set $A\subseteq G^{+}$, we say that \emph{$A$ has $c$-large
$k$-tupling} if $\left\vert k\cdot A\right\vert \ge c\left\vert A\right\vert ^{k}$. 
\item For an infinite set $A\subseteq G^{+}$, we say that \emph{$A$ has
$c$-large lower-asymptotic $k$-tupling} if
\[
\liminf_{n\rightarrow\infty}\frac{\left\vert k\cdot A_{<n}\right\vert }{\left\vert A_{<n}\right\vert ^{k}}\ge c
\]
Equivalently, if for all $\epsilon>0$ there exists $n_{0}$ such
that for all $n\ge n_{0}$, $\left\vert k\cdot A_{<n}\right\vert \ge(c-\epsilon)\left\vert A_{<n}\right\vert ^{k}$.
We say that \emph{$A$ has large lower-asymptotic $k$-tupling} if
the above $\liminf$ is positive, i.e., if there exists $c>0$ for
which $A$ has $c$-large lower-asymptotic $k$-tupling.
\end{enumerate}
For $k=2$ we say ``doubling'' instead of ``$2$-tupling''.
\end{defn}

\begin{defn}
Let $2\le k\in\N$. A \emph{$k$-uniform hypergraph}, also called
a \emph{$k$-graph}, is a pair $\left(V,E\right)$, where $V$ is
a set and $E\subseteq\left[V\right]^{k}:=\left\{ S\subseteq V\,:\,\left\vert S\right\vert =k\right\} $.
A \emph{$k$-partite $k$-graph} is a $k$-graph $\left(V,E\right)$
together with a choice of partition $V=V_{0}\sqcup\dots\sqcup V_{k-1}$,
such that for each $e\in E$ and each $0\le i\le k-1$, $\left\vert e\cap V_{i}\right\vert =1$.
The sets $\left\{ V_{i}\right\} _{i=0}^{k-1}$ are called the \emph{parts}
of the $k$-graph. For $t\in\N$ we denote by $K_{t:k}$ the \emph{complete
$k$-partite $k$-graph with parts of size $t$}, i.e., the $k$-graph
$\left(V,E\right)$ with partition $V=V_{0}\sqcup\dots\sqcup V_{k-1}$,
where $\left\vert V_{i}\right\vert =t$ for each $i$, $\left\{ a_{0},\dots,a_{k-1}\right\} \in E$
whenever $a_{0}\in V_{0},\dots,a_{k-1}\in V_{k-1}$, and there are
no other edges.
\end{defn}

\begin{fact}[{The K\H{o}v{\'a}ri-S{\'o}s-Tur{\'a}n Theorem for hypergraphs \cite[(4.2)]{Fuer91}}]
\label{fact:Kovari-Sos-Turan}

For every $t\ge2$ and every $k\ge2$ there exists a constant $C=C(t,k)$
such that for all $n\in\N$, if $H$ is a $k$-partite $k$-graph
with $n$ vertices in each part and with more than $Cn^{k-\frac{1}{t^{k-1}}}$
edges, then $H$ contains a copy of $K_{t:k}$ as a subgraph.
\end{fact}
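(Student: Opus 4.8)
The statement is the classical Erdős / Kővári–Sós–Turán bound for the Turán problem of the complete $k$-partite $k$-graph, so the plan is to reconstruct its standard proof by induction on $k$ via the reduction to common links. It is convenient to include the trivial base case $k=1$, where $K_{t:1}$ is just a $t$-element subset of the single part: a $1$-partite $1$-graph (a subset of one part) with more than $t-1$ vertices contains $K_{t:1}$, so the claim holds with $C(t,1)=t-1$ and threshold $n^{1-1/t^{0}}=1$.

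For the inductive step ($k\ge 2$, assuming the claim for $k-1$), let $H$ have parts $V_0,\ldots,V_{k-1}$ of size $n$ and more than $C(t,k)\,n^{k-1/t^{k-1}}$ edges. For a $t$-subset $T\subseteq V_{k-1}$, define its \emph{common link} $H_T$ to be the $(k-1)$-partite $(k-1)$-graph on $V_0,\ldots,V_{k-2}$ whose edges are the $(k-1)$-sets $e'$ (one vertex in each of the first $k-1$ parts) with $e'\cup\{v\}\in E(H)$ for every $v\in T$. The point of the definition is the equivalence
\[
K_{t:k}\subseteq H \quad\Longleftrightarrow\quad K_{t:k-1}\subseteq H_T \text{ for some } T\in\binom{V_{k-1}}{t},
\]
since a copy of $K_{t:k-1}$ in $H_T$ is exactly a choice of $t$-subsets $S_0,\ldots,S_{k-2}$ all of whose product tuples, together with each $v\in T$, form edges of $H$.

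To find a heavy common link I double count. Writing $d(e')$ for the number of $v\in V_{k-1}$ with $e'\cup\{v\}\in E(H)$, each $(k-1)$-set $e'$ lies in $H_T$ for exactly $\binom{d(e')}{t}$ of the sets $T$, so
\[
\sum_{T\in\binom{V_{k-1}}{t}} e(H_T) = \sum_{e'}\binom{d(e')}{t} \ge n^{k-1}\binom{\bar{d}}{t}, \qquad \bar{d}:=\frac{e(H)}{n^{k-1}},
\]
the inequality being Jensen applied to the convex function $\binom{\cdot}{t}$ over the $n^{k-1}$ tuples $e'$. Hence some $T$ has $e(H_T)\ge n^{k-1}\binom{\bar{d}}{t}/\binom{n}{t}$, and if the edge count of $H$ is large enough that this lower bound exceeds $C(t,k-1)\,n^{(k-1)-1/t^{k-2}}$, the inductive hypothesis yields $K_{t:k-1}\subseteq H_T$, hence $K_{t:k}\subseteq H$.

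The remaining work, and the only delicate point, is the exponent bookkeeping. Using $\binom{\bar{d}}{t}\ge(\bar{d}/t)^t$ (valid since $\bar{d}\ge t$ once $e(H)$ is large) and $\binom{n}{t}\le n^t$, the heavy-link bound is at least a constant times $\bar{d}^t\,n^{k-1-t}=e(H)^t\,n^{k-1-kt}$; requiring this to exceed $C(t,k-1)\,n^{(k-1)-1/t^{k-2}}$ and solving for $e(H)$ gives precisely $e(H)>C(t,k)\,n^{k-1/t^{k-1}}$, because dividing the exponent $kt-1/t^{k-2}$ by $t$ yields $k-1/t^{k-1}$. The main obstacle is thus to verify that each single Jensen step advances the integer part of the exponent by one while sending $1/t^{k-2}$ to $1/t^{k-1}$, and to confirm that the relevant averages remain in the regime $\bar{d}\ge t$ where the binomial estimate is non-vacuous; once the constants $C(t,k)$ are defined recursively from $C(t,k-1)$, the induction closes.
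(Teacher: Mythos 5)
This statement is quoted in the paper as a known result (imported from F\"uredi's survey, where it appears as inequality (4.2) and goes back to Erd\H{o}s), so the paper contains no proof of its own to compare against; your task here is really to supply the classical argument, and you have done so correctly. Your proof is the standard induction on the uniformity $k$ via common links: the double count $\sum_{T}e(H_T)=\sum_{e'}\binom{d(e')}{t}$ is right (each $e'$ lies in $H_T$ exactly when $T$ is contained in its neighbourhood in $V_{k-1}$), the averaging over the $\binom{n}{t}$ choices of $T$ is right, the exponent arithmetic $(kt-1/t^{k-2})/t=k-1/t^{k-1}$ checks out, and the recursion $C(t,k)=\max\bigl(t,\;t\cdot C(t,k-1)^{1/t}\bigr)$ closes the induction. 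Only two points should be made explicit rather than gestured at. First, Jensen cannot be applied to the falling-factorial polynomial $x(x-1)\cdots(x-t+1)/t!$ itself, which is negative (and for $t\ge3$ non-convex) on parts of $[0,t-1]$; the standard fix is to apply it to the convex function equal to $0$ for $x\le t-1$ and to that polynomial beyond, which minorizes $\binom{d}{t}$ at every non-negative integer and agrees with $\binom{\bar d}{t}$ once $\bar d\ge t$. Second, the condition $\bar d\ge t$ is automatic from the hypothesis, since $\bar d=e(H)/n^{k-1}>C(t,k)\,n^{1-1/t^{k-1}}\ge C(t,k)\ge t$. With those two sentences inserted, your write-up is a complete and correct proof of the cited fact.
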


\begin{lem}
\label{lem:passing_from_A_n_to_A_kn}Let $\emptyset\neq A\subseteq G^{+}$,
$2\le k\in\N$, and $\epsilon>0$. Then for all $m\in\N$ there exists
$n\ge m$ such that $\left\vert A_{<kn}\right\vert \le\left(k+\epsilon\right)^{d}\left\vert A_{<n}\right\vert $.
\end{lem}

\begin{proof}
Let $m\in\N$. By increasing $m$, we may assume that $A_{<m}\neq\emptyset$.
For $r\in\N$ denote $n_{r}:=k^{r}m$. Suppose towards a contradiction
that for all $r$, $\left\vert A_{<n_{r+1}}\right\vert =\left\vert A_{<kn_{r}}\right\vert >\left(k+\epsilon\right)^{d}\left\vert A_{<n_{r}}\right\vert $.
Then by induction, for all $r$, $\left\vert A_{<n_{r}}\right\vert >\left(k+\epsilon\right)^{dr}\left\vert A_{<m}\right\vert $. 

Note that
\[
G_{<n_{r}}^{+}=\bigsqcup_{0\le i_{1},\dots,i_{d}\le k^{r}-1}\left(G_{<m}^{+}+\left(i_{1}m,\dots,i_{d}m,0\right)\right)
\]
hence, there are $0\le i_{1},\dots,i_{d}\le k^{r}-1$ such that 
\[
\left\vert A_{<n_{r}}\cap\left(G_{<m}^{+}+\left(i_{1}m,\dots,i_{d}m,0\right)\right)\right\vert \ge\frac{1}{k^{dr}}\left\vert A_{<n_{r}}\right\vert \ge\frac{\left(k+\epsilon\right)^{dr}}{k^{dr}}\left\vert A_{<m}\right\vert =\left(\frac{k+\epsilon}{k}\right)^{dr}\left\vert A_{<m}\right\vert 
\]
On the other hand, 
\[
\left\vert A_{<n_{r}}\cap\left(G_{<m}^{+}+\left(i_{1}m,\dots,i_{d}m,0\right)\right)\right\vert \le\left\vert G_{<m}^{+}\right\vert =m^{d}\left\vert F\right\vert 
\]
So $\left(\frac{k+\epsilon}{k}\right)^{dr}\left\vert A_{<m}\right\vert \le m^{d}\left\vert F\right\vert $.
This is true for all $r$, but $\left(\frac{k+\epsilon}{k}\right)^{dr}\left\vert A_{<m}\right\vert \xrightarrow[r\to\infty]{}\infty$,
a contradiction. 
\end{proof}
\begin{prop}
\label{prop:large_k_tupling_gives_dp_rank_atleast_k}Let $\mathcal{G}$
be an expansion of $\left(G,+,0\right)$, and let $A\subseteq G^{+}$
be infinite and definable in $\mathcal{G}$. Let $k\ge2$ and suppose
that $A$ has large lower-asymptotic $k$-tupling. Then there exists
$K\in\N$ for which the assumptions of \cref{prop:unpopular_k_sumset_gives_dp_rank_atleast_k}
are satisfied, and hence $\text{dp-rank}(\mathcal{G})\ge k$.
\end{prop}

\begin{proof}
By the definition of having large lower-asymptotic $k$-tupling, there
exist $c>0$ and $n_{0}\in\N$ such that for all $n\ge n_{0}$, $\left\vert k\cdot A_{<n}\right\vert \ge\frac{c}{2}\left\vert A_{<n}\right\vert ^{k}$.
Let $t\in\N$. Since $A$ is infinite, there is $m\ge n_{0}$ such
that $\frac{c}{4}\left\vert A_{<m}\right\vert ^{k}>C(t,k)\left\vert A_{<m}\right\vert ^{k-\frac{1}{t^{k-1}}}$,
where $C(t,k)$ is as in \cref{fact:Kovari-Sos-Turan}. By \cref{lem:passing_from_A_n_to_A_kn}
there exists $n\ge m$ such that $\left\vert A_{<kn}\right\vert \le\left(k+1\right)^{d}\left\vert A_{<n}\right\vert $.
Note that still $\frac{c}{4}\left\vert A_{<n}\right\vert ^{k}>C(t,k)\left\vert A_{<n}\right\vert ^{k-\frac{1}{t^{k-1}}}$.

Let $K=\frac{4\left(k+1\right)^{dk}}{c}$. Consider the map $r_{A_{<kn}}^{k}:k\cdot A_{<kn}\rightarrow\N$
and the set $D_{K}^{k}(A_{<kn})$, as defined in \cref{def:notations_sumsets_popularity}.
By Markov's inequality (for the counting measure on $k\cdot A_{<kn}$)
and \cref{obs:counting_all_reprs_as_sum_of_k}, 
\begin{align*}
\left\vert D_{K}^{k}(A_{<kn})\right\vert \le & \frac{1}{K}\sum_{x\in k\cdot A_{<kn}}r_{A_{<kn}}^{k}(x)=\frac{1}{K}\left\vert A_{<kn}\right\vert ^{k}
\end{align*}
So 
\begin{align*}
\left\vert \left(k\cdot A_{<n}\right)\backslash D_{K}^{k}(A_{<kn})\right\vert \ge & \left\vert k\cdot A_{<n}\right\vert -\left\vert D_{K}^{k}(A_{<kn})\right\vert \\
\ge & \left\vert k\cdot A_{<n}\right\vert -\frac{1}{K}\left\vert A_{<kn}\right\vert ^{k}\\
\ge & \frac{c}{2}\left\vert A_{<n}\right\vert ^{k}-\frac{1}{K}\left\vert A_{<kn}\right\vert ^{k}\\
\ge & \frac{c}{2}\left\vert A_{<n}\right\vert ^{k}-\frac{1}{K}\left(\left(k+1\right)^{d}\left\vert A_{<n}\right\vert \right)^{k}\\
= & \frac{c}{2}\left\vert A_{<n}\right\vert ^{k}-\frac{1}{K}\left(k+1\right)^{dk}\left\vert A_{<n}\right\vert ^{k}\\
= & \left(\frac{c}{2}-\frac{\left(k+1\right)^{dk}}{K}\right)\left\vert A_{<n}\right\vert ^{k}\\
= & \frac{c}{4}\left\vert A_{<n}\right\vert ^{k}
\end{align*}
And so
\begin{align*}
 & \left\vert \left\{ (a_{1},\dots,a_{k})\in A_{<n}^{k}\,:\,a_{1}+\dots+a_{k}\in\left(k\cdot A_{<n}\right)\backslash D_{K}^{k}(A_{<kn})\right\} \right\vert \ge\\
 & \ge\left\vert \left(k\cdot A_{<n}\right)\backslash D_{K}^{k}(A_{<kn})\right\vert \ge\frac{c}{4}\left\vert A_{<n}\right\vert ^{k}
\end{align*}

Define a $k$-partite $k$-graph $\left(V,E\right)$ as follows: the
vertices are $V=V_{0}\sqcup\dots\sqcup V_{k-1}$, where for each $i$,
$V_{i}:=\left\{ i\right\} \times A_{<n}$, and the edges are 
\[
E:=\left\{ (a_{1},\dots,a_{k})\in A_{<n}^{k}\,:\,a_{1}+\dots+a_{k}\in\left(k\cdot A_{<n}\right)\backslash D_{K}^{k}(A_{<kn})\right\} 
\]
Since $\left(V,E\right)$ has $\left\vert A_{<n}\right\vert $ vertices
in each part, and at least $\frac{c}{4}\left\vert A_{<n}\right\vert ^{k}>C(t,k)\left\vert A_{<n}\right\vert ^{k-\frac{1}{t^{k-1}}}$
edges, by \cref{fact:Kovari-Sos-Turan} for $t,k$, this graph contains
a copy of $K_{t:k}$ as a subgraph. This means that there are subsets
$B_{t,1},\dots,B_{t,k}\subseteq A_{<n}$, each of size $t$, such
that $B_{t,1}+\dots+B_{t,k}\subseteq\left(k\cdot A_{<n}\right)\backslash D_{K}^{k}(A_{<kn})$. 

Note that if $x\in k\cdot A_{<n}$ then $\left\Vert x\right\Vert <kn$.
So if $a_{1},\dots,a_{k}\in A$ are such that $a_{1}+\dots+a_{k}=x$,
then for each $1\le i\le d$, $a_{1}^{i}+\dots+a_{k}^{i}=x^{i}<kn$.
Since for all $1\le l\le k$, $a_{l}^{i}\ge0$, we get that for each
$1\le l\le k$, $a_{l}^{i}<kn$. Hence $a_{1},\dots,a_{k}\in A_{<kn}$.
Therefore $\left(k\cdot A_{<n}\right)\backslash D_{K}^{k}(A_{<kn})=\left(k\cdot A_{<n}\right)\backslash D_{K}^{k}(A)$.

Since $t$ was arbitrary, by \cref{prop:unpopular_k_sumset_gives_dp_rank_atleast_k},
$\text{dp-rank}(\mathcal{G})\ge k$.
\end{proof}
We now prove an additional lemma that will be used in the next section.
\begin{lem}
\label{lem:no_large_k_tupling_gives_failure_to_eliminate_exists_infty}Let
$\mathcal{G}$ be an expansion of $\left(G,+,0\right)$, and let $A\subseteq G^{+}$
be infinite and definable in $\mathcal{G}$. Let $k\ge2$ and suppose
that $A$ does not have large lower-asymptotic $k$-tupling. Then
$\mathcal{G}$ does not eliminate $\exists^{\infty}$.
\end{lem}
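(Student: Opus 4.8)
The plan is to argue by induction on $k\ge 2$. Failing large lower-asymptotic $k$-tupling means $\liminf_{n}\left\vert k\cdot A_{\le n}\right\vert/\left\vert A_{\le n}\right\vert^{k}=0$, so I fix a subsequence $n\in I$ along which this ratio tends to $0$. To witness failure of elimination of $\exists^{\infty}$ (in the sense recalled in the proof of \lemref{dp-minimality-prevents-large-two-sided-gaps}) it suffices to exhibit a single formula $\psi(v;w)$ with $\left\vert v\right\vert=1$ such that for every $m$ there is a standard $w\in\Z$ with $m\le\left\vert\psi(\Z,w)\right\vert<\infty$. The formula I would use records the split of a $k$-fold sum into one summand and the rest: $\psi(v;w):=\,v\in A\,\wedge\,w-v\in(k-1)\cdot A$, which is definable in $\mathcal{Z}$ (as in the remark following \defref{notations_sumsets_popularity}). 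For standard $w$, since $A\subseteq\N$ forces $0\le v\le w$, the set $\psi(\Z,w)$ is automatically finite, so the only task is to make it large.

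The inductive step rests on a dichotomy. Given that $A$ fails large $k$-tupling, either $A$ also fails large $(k-1)$-tupling, in which case the induction hypothesis applied to the same $A$ and to $k-1$ gives the conclusion at once; or $A$ \emph{has} large $(k-1)$-tupling, i.e. $\left\vert(k-1)\cdot A_{\le n}\right\vert\ge c\left\vert A_{\le n}\right\vert^{k-1}$ for some $c>0$ and all large $n$. The second alternative is automatic when $k=2$, since there $(k-1)\cdot A_{\le n}=A_{\le n}$; this is the base case.

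In the second alternative I carry out a counting argument. By a direct double count, in the spirit of \obsref{counting_all_reprs_as_sum_of_k}, one has $\sum_{w}\left\vert\{v\in A_{\le n}:w-v\in(k-1)\cdot A_{\le n}\}\right\vert=\left\vert A_{\le n}\right\vert\cdot\left\vert(k-1)\cdot A_{\le n}\right\vert$, where $w$ ranges over the $\left\vert k\cdot A_{\le n}\right\vert$ elements of $k\cdot A_{\le n}$. Averaging, some $w_{0}\in k\cdot A_{\le n}$ satisfies $\left\vert\{v\in A_{\le n}:w_{0}-v\in(k-1)\cdot A_{\le n}\}\right\vert\ge\left\vert A_{\le n}\right\vert\cdot\left\vert(k-1)\cdot A_{\le n}\right\vert/\left\vert k\cdot A_{\le n}\right\vert$. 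Feeding in large $(k-1)$-tupling and restricting to $n\in I$, this lower bound is at least $c\left\vert A_{\le n}\right\vert^{k}/\left\vert k\cdot A_{\le n}\right\vert\to\infty$. Since each such $v$ lies in $\psi(\Z,w_{0})$ and $w_{0}\le kn$ is standard, this yields for every $m$ a standard $w_{0}$ with $m\le\left\vert\psi(\Z,w_{0})\right\vert<\infty$, as desired.

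The main obstacle is choosing the split correctly. I expect the tempting move, namely to split at the largest $j$ for which $A$ has large $j$-tupling, to fail: it controls only $\left\vert j\cdot A_{\le n}\right\vert$ and leaves $\left\vert(k-j)\cdot A_{\le n}\right\vert$ uncontrolled, so the averaged quantity $\left\vert j\cdot A_{\le n}\right\vert\left\vert(k-j)\cdot A_{\le n}\right\vert/\left\vert k\cdot A_{\le n}\right\vert$ need not blow up. Splitting off a single summand keeps one factor exactly $\left\vert A_{\le n}\right\vert$ and pushes the entire difficulty onto the $(k-1)$-tupling of $A$, which is precisely what the induction resolves; the only remaining care is to confirm that the averaged quantity genuinely tends to infinity along $I$ and that $\psi(\Z,w_{0})$, although defined using all of $A$, stays finite, both of which follow from $A\subseteq\N$ together with $w_{0}\le kn$.
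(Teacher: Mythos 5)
Your proof is correct, and it lands on essentially the same witnessing formula as the paper ($\psi(x,y)$ asserting $x\in A$ and $y-x\in\left(k-1\right)\cdot A$), but it reaches the large finite instances by a genuinely different route. You run an induction on $k$ with a dichotomy on whether $A$ has large lower-asymptotic $\left(k-1\right)$-tupling, and in the positive branch you double-count pairs $\left(w,v\right)$ to find $w_{0}\in k\cdot A_{\le n}$ whose fibre $\left\{ v\in A_{\le n}\,:\,w_{0}-v\in\left(k-1\right)\cdot A_{\le n}\right\} $ has size at least $\left\vert A_{\le n}\right\vert \left\vert \left(k-1\right)\cdot A_{\le n}\right\vert /\left\vert k\cdot A_{\le n}\right\vert $; all the steps check out, including the base case $k=2$ and the finiteness of $\psi(\Z,w_{0})$ from $A\subseteq\N$. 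The paper avoids the induction entirely: from $\sum_{b}r_{A_{\le n}}^{k}(b)=\left\vert A_{\le n}\right\vert ^{k}$ (\obsref{counting_all_reprs_as_sum_of_k}) and the smallness of $\left\vert k\cdot A_{\le n}\right\vert $ it extracts, for every $K$, some $b\in k\cdot A$ with $r_{A}^{k}(b)\ge K$, and then applies the elementary bound $r_{A}^{k}(b)\le\left\vert \psi(\Z,b)\right\vert ^{k}$ (every coordinate of every ordered representation of $b$ lies in $\psi(\Z,b)$) to conclude $\left\vert \psi(\Z,b)\right\vert \ge r_{A}^{k}(b)^{1/k}$. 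That one-line bound is exactly what dissolves the obstacle you describe in your final paragraph: it needs no control of $\left\vert \left(k-1\right)\cdot A_{\le n}\right\vert $ at all, so no dichotomy and no induction are required. What your version buys is a slightly stronger quantitative conclusion in the positive branch (fibres of size about $\left\vert A_{\le n}\right\vert ^{k}/\left\vert k\cdot A_{\le n}\right\vert $ rather than its $k$-th root), at the cost of a longer argument.
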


\begin{proof}
By the definition of large lower-asymptotic $k$-tupling, for every
$c>0$ there are infinitely many numbers $n\in\N$ such that $\left\vert k\cdot A_{<n}\right\vert <c\left\vert A_{<n}\right\vert ^{k}$.
First we show that for all $K\in\N$, $D_{K}^{k}(A)\neq\emptyset$,
i.e., there exists $b\in k\cdot A$ such that $r_{A}^{k}(b)\ge K$.
Suppose otherwise, and let $c=\frac{1}{K}$. Let $n\in\N$ be such
that $\left\vert k\cdot A_{<n}\right\vert <c\left\vert A_{<n}\right\vert ^{k}$.
So for all $b\in k\cdot A_{<n}$, $r_{A_{<n}}^{k}(b)\le r_{A}^{k}(b)<K$.
Therefore, by \cref{obs:counting_all_reprs_as_sum_of_k}, 
\[
\left\vert A_{<n}\right\vert ^{k}=\sum_{b\in k\cdot A_{<n}}r_{A_{<n}}^{k}(b)<K\left\vert k\cdot A_{<n}\right\vert <Kc\left\vert A_{<n}\right\vert ^{k}=\left\vert A_{<n}\right\vert ^{k}
\]
a contradiction.

Now, consider the formula $\psi(x,y)$ given by 
\[
x\in A\wedge\exists z_{1},\dots,z_{k-1}\in A\left(y=x+z_{1}+\dots+z_{k-1}\right)
\]
Since $A\subseteq G^{+}$, for all $b\in G$ the set $\psi(G,b)$
is finite. Let $L\in\N$, and let $b\in k\cdot A$ such that $r_{A}^{k}(b)\ge L^{k}$.
Then $r_{A}^{k}(b)\le\left\vert \psi(G,b)\right\vert ^{k}$, and therefore
$\left\vert \psi(G,b)\right\vert \ge L$. This shows that $\mathcal{G}$
does not eliminate $\exists^{\infty}$ for $\psi(x,y)$.
\end{proof}

\subsection{The remaining case}

Here we consider what happens when $A$ does not have bounded two-sided
gaps and also does not have large doubling.
\begin{prop}
\label{prop:dp_min_gives_either_elim_exsts_infty_or_bdd_two_sdd_gaps}Let
$\mathcal{Z}$ be an expansion of $\left(\mathbb{Z},+,0,1\right)$
which does not eliminate $\exists^{\infty}$, and let $A\subseteq\Z$
be infinite and definable in $\mathcal{Z}$. Suppose that $A$ does
not have bounded two-sided gaps. Then $\mathcal{Z}$ is not dp-minimal.
\end{prop}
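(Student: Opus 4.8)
The plan is to produce, for each $n$, an ict-pattern of depth $2$ and length $n$, thereby contradicting dp-minimality. I have two combinatorial resources to play with: first, the failure of bounded two-sided gaps, which gives me elements of $A$ that are isolated on the left by arbitrarily long $A$-free intervals; second, the failure of elimination of $\exists^{\infty}$, which (as in \lemref{dp-minimality-prevents-large-two-sided-gaps} and \remref{remarks_on_adding_the_order}) supplies a formula $\phi(x,y)$ with arbitrarily large finite fibers, and hence in an elementary extension a set that is infinite but bounded. The rough idea is that the two failures can be combined: one coordinate of the pattern will use translates of $A$ (exploiting the long left-gaps so that distinct translates are ``separated'' and a point cannot lie in two of them simultaneously), and the other coordinate will use the large-but-finite fibers of $\phi$ to index the second direction, exactly as in the proof of \lemref{dp-minimality-prevents-large-two-sided-gaps}.

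Concretely, I would first fix $n$ and, using that $A$ does not have bounded two-sided gaps, select points $y_1,\dots,y_n\in A$ together with a scale $M$ so that each $y_i$ has an $A$-free interval of length $\ge 2M$ immediately to its left, i.e. $[y_i-2M,\,y_i-1]\cap A=\emptyset$. This is the analogue of the condition ``$L_A(y)$ large'' appearing in \propref{bounded_two_sided_gaps_gives_IP}, and it guarantees that if a single element $x$ satisfies $x\in A+r$ and $x\in A+r'$ for translations $r,r'\in\mathbb Z$ with $0\le r-r'\le M-1$, then we get a contradiction, because $x-r,x-r'\in A$ would force an element of $A$ inside a supposedly empty interval. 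Next, following \remref{remarks_on_adding_the_order}(\ref{enu:elim-exists-infty-gives-infinite-bndd-def-set-in-extension}), I would pass to $\phi$ and extract a long arithmetic-progression-like sequence of realizations $r_1<r_2<\dots<r_n$ inside a single fiber $\phi(\mathbb Z,b)$ with consecutive gaps $\ge M$, exactly as the $\{r_i\}$ are constructed in \lemref{dp-minimality-prevents-large-two-sided-gaps}; the large finite size of the fiber is what makes this selection possible.

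The pattern itself would then be
\[
\begin{array}{l}
\left\{\,x\in A+r_i\ :\ 1\le i\le n\,\right\},\\[2pt]
\left\{\,\phi(x-y_j,b)\ :\ 1\le j\le n\,\right\},
\end{array}
\]
and for each pair $(i,j)$ the witness $x=r_i+y_j$ satisfies exactly the $i$-th formula in the first row and exactly the $j$-th formula in the second row: the first-row exclusivity comes from the long left-gaps together with the spacing $r_i-r_k\ge M$, and the second-row exclusivity comes from arranging that the $\phi$-fibers attached to distinct $y_j$ are disjoint (either because the $y_j$ lie in different cosets of $\mathbb Z$ in an elementary extension, or by choosing the $y_j$ far apart relative to the diameter of the bounded fiber $\phi(\cM,b)$). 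The main obstacle I anticipate is the bookkeeping to make the two rows genuinely \emph{independent}: I must ensure simultaneously that the second-coordinate fibers are pairwise disjoint and that the first-coordinate translates do not accidentally overlap on the relevant witnesses, so that every one of the $n^2$ combinations is realized and no spurious extra incidences occur. Since $n$ is arbitrary this yields ict-patterns of unbounded length in depth $2$, contradicting dp-minimality and proving that $\mathcal Z$ is not dp-minimal.
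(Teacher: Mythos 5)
Your overall strategy --- an ict-pattern of depth $2$ whose rows are translates of $A$ and translates of a large finite fiber of $\phi$ --- is the right one, and it is essentially the content of \lemref{dp-minimality-prevents-large-two-sided-gaps}, which the paper invokes here as a black box. But as written your construction has a genuine gap in the first-row exclusivity, caused by the order in which you make your choices. You fix the scale $M$ and the points $y_1,\dots,y_n$ \emph{before} choosing $b$ and the realizations $r_1<\dots<r_n$. The witness $x=r_i+y_j$ lies in $A+r_k$ exactly when $y_j+(r_i-r_k)\in A$, and you must rule this out for \emph{every} $k\neq i$. A left gap $[y_j-2M,y_j-1]\cap A=\emptyset$ says nothing when $r_i>r_k$ (the point $y_j+(r_i-r_k)$ is then to the \emph{right} of $y_j$, where nothing is controlled); and even a two-sided gap of radius $2M$ does not help when $\left\vert r_i-r_k\right\vert>2M$, which happens as soon as $i$ and $k$ are non-adjacent, since you only bound consecutive gaps of the $r$'s from below and the total spread $r_n-r_1$ can be arbitrarily large compared with $M$ and $n$ (the fiber $\phi(\mathbb{Z},b)$ need not be contained in any interval whose length you control in advance). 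The same ordering problem affects your second row: to make the sets $y_j+\phi(\mathbb{Z},b)$ pairwise disjoint by spreading out the $y_j$, you need to know the diameter of $\phi(\mathbb{Z},b)$ first. You flag this bookkeeping as an anticipated obstacle, but it is exactly where the proof lives, and the justifications you give do not survive it.

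The argument can be repaired by reversing the quantifiers: first choose $b$ with $n\le\left\vert\phi(\mathbb{Z},b)\right\vert<\infty$ and any $n$ distinct elements $r_1<\dots<r_n$ of the fiber, set $S:=r_n-r_1$ and let $\Delta$ be the diameter of $\phi(\mathbb{Z},b)$, and only then use the failure of bounded two-sided gaps to pick $y_1,\dots,y_n\in A$, pairwise more than $\Delta$ apart, each with a \emph{two-sided} $A$-free gap of radius $S$ (for each radius the set of such points is infinite, because $A\subseteq\mathbb{N}$ is infinite and lies in a single coset of $\mathbb{Z}$, so no single point can carry gaps of every radius). The paper sidesteps all of this: it uses compactness to push the isolated points into an elementary extension, where they become pairwise distinct elements $b_n$ with $A^{*}\cap(b_n+\mathbb{Z})=\{b_n\}$, hence lying in pairwise distinct copies of $\mathbb{Z}$; then \lemref{dp-minimality-prevents-large-two-sided-gaps} applies verbatim, since there the whole coset intersection is a singleton and \emph{any} nonzero integer translate automatically misses $A^{*}$, with no quantitative bookkeeping. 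I would recommend taking that route rather than reproving the lemma inside $\mathbb{Z}$.
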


\begin{proof}
By the definition of bounded two-sided gaps, for every $n\in\N$ there
exists $a_{n}\in A$ such that for all $d\in\left[-n,-1\right]\cup\left[1,n\right]$,
$a_{n}+d\notin A$. Since $A$ is infinite, these elements can be
chosen to be pairwise distinct. By compactness, in some elementary
extension $\mathcal{Z}^{*}$ of $\mathcal{Z}$ there are $\left(b_{n}\right)_{n\in\omega}$
which are pairwise distinct, such that for all $n\in\N$, $A^{*}\cap\left(b_{n}+\mathbb{Z}\right)=\left\{ b_{n}\right\} $,
where $A^{*}$ denotes the interpretation inside $\mathcal{Z^{*}}$
of the formula defining $A$ in $\cZ$. By \cref{lem:dp-minimality-prevents-large-two-sided-gaps},
$\mathcal{Z}$ is not dp-minimal.
\end{proof}
\begin{cor}
\label{cor:dp_min_gives_either_large_doubling_or_bdd_two_sdd_gaps}Let
$\mathcal{Z}$ be an expansion of $\left(\mathbb{Z},+,0,1\right)$,
and let $A\subseteq\N$ be infinite and definable in $\mathcal{Z}$.
Suppose that $A$ does not have bounded two-sided gaps, and suppose
also that $A$ does not have large lower-asymptotic doubling. Then
$\mathcal{Z}$ is not dp-minimal.
\end{cor}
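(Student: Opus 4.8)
The plan is to combine the two preceding results directly, using the dichotomy provided by \lemref{no_large_k_tupling_gives_failure_to_eliminate_exists_infty} (for $k=2$) to bridge the two hypotheses. The statement to prove is \corref{dp_min_gives_either_large_doubling_or_bdd_two_sdd_gaps}: if $\mathcal{Z}$ expands $(\mathbb{Z},+,0,1)$, $A\subseteq\N$ is infinite and definable, $A$ has neither bounded two-sided gaps nor large lower-asymptotic doubling, then $\mathcal{Z}$ is not dp-minimal. Since the machinery is already in place, this should be a short deduction rather than a fresh argument.

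First I would apply \lemref{no_large_k_tupling_gives_failure_to_eliminate_exists_infty} with $k=2$. The hypothesis that $A$ does not have large lower-asymptotic doubling is exactly the statement that $A$ does not have large lower-asymptotic $2$-tupling, so the lemma immediately yields that $\mathcal{Z}$ does not eliminate $\exists^{\infty}$. This converts the purely combinatorial ``no large doubling'' hypothesis into the model-theoretic hypothesis needed by the previous proposition.

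Next I would invoke \propref{dp_min_gives_either_elim_exsts_infty_or_bdd_two_sdd_gaps}. We now have all three of its hypotheses: $\mathcal{Z}$ is an expansion of $(\mathbb{Z},+,0,1)$ that does not eliminate $\exists^{\infty}$ (just established), $A\subseteq\N$ is infinite and definable (given), and $A$ does not have bounded two-sided gaps (given). The proposition then concludes directly that $\mathcal{Z}$ is not dp-minimal, which is precisely what we want.

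There is no real obstacle here, since each ingredient has already been proved; the only point to be careful about is matching the terminology ``large lower-asymptotic doubling'' in the hypothesis with ``large lower-asymptotic $2$-tupling'' in the statement of \lemref{no_large_k_tupling_gives_failure_to_eliminate_exists_infty}, which the paper's own definitions identify. Thus the proof is simply the two-step chain: no large doubling gives failure of $\exists^{\infty}$-elimination via the lemma with $k=2$, and then failure of $\exists^{\infty}$-elimination together with unbounded two-sided gaps gives non-dp-minimality via the proposition.
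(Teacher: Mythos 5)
Your proof is correct and is exactly the paper's argument: the paper's own proof is the one-line citation of \lemref{no_large_k_tupling_gives_failure_to_eliminate_exists_infty} and \propref{dp_min_gives_either_elim_exsts_infty_or_bdd_two_sdd_gaps}, which you have simply spelled out with the $k=2$ instantiation made explicit. No issues.
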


\begin{proof}
This follows from \cref{lem:no_large_k_tupling_gives_failure_to_eliminate_exists_infty}
and \cref{prop:dp_min_gives_either_elim_exsts_infty_or_bdd_two_sdd_gaps}.
\end{proof}
Although in \cref{cor:dp_min_gives_either_large_doubling_or_bdd_two_sdd_gaps}
we could only conclude that $\mathcal{Z}$ is not dp-minimal, we believe
that in fact the following holds:
\begin{conjecture}
Let $\mathcal{Z}$ be an expansion of $\left(\mathbb{Z},+,0,1\right)$,
and let $A\subseteq\N$ be infinite and definable in $\mathcal{Z}$.
Suppose that $A$ does not have bounded two-sided gaps, and suppose
also that for some $k\in\N$, $A$ does not have large lower-asymptotic
$k$-tupling. Then $\text{dp-rank}(\mathcal{Z})\ge n$ for all $n\in\N$.
\end{conjecture}

If this is true, it will follow that \cref{thm:new-dp-minimal-infinite-set-of-natural-numbers}
remains true when, instead of assuming that $\mathcal{Z}$ is dp-minimal,
we only assume that $\text{dp-rank}(\mathcal{Z})=n$ for some $n\in\N$.
Also note that the proof of \cref{cor:thm1_implies_thm2} itself uses
the assumption of dp-minimality (via \cref{lem:dp-minimality-prevents-large-two-sided-gaps}),
and we do not know whether \cref{cor:thm1_implies_thm2} remains true
if in \cref{thm:new-dp-minimal-infinite-set-of-natural-numbers} and
\cref{thm:dp-minimal-which-does-not-eliminate-exists-infty} we only
assume that $\text{dp-rank}(\mathcal{Z})=n$ for some $n\in\N$, instead
of assuming that $\mathcal{Z}$ is dp-minimal. Nevertheless, we believe
that \cref{thm:dp-minimal-which-does-not-eliminate-exists-infty} also
remains true under this weaker assumption.

\subsection{Putting it all together}

We now have all the ingredients for the proof of the main theorem.
\begin{thm}
Let $\mathcal{Z}$ be a dp-minimal expansion of $\left(\mathbb{Z},+,0,1\right)$,
and suppose that there exists an infinite set $A\subseteq\mathbb{N}$
that is definable in $\mathcal{Z}$. Then $\mathcal{Z}$ is interdefinable
with $\left(\mathbb{Z},+,0,1,<\right)$.
\end{thm}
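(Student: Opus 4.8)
The plan is to combine the case analysis that has been set up throughout this section into a single proof, reducing everything to the situation where $A$ is an infinite definable subset of $\N$ with one of a small number of structural properties, and then invoking the appropriate proposition in each case. The first reduction I would make is to the Shelah expansion: since $\mathcal{Z}$ is dp-minimal, hence NIP, by \corref{shelah_expansion_preserves_dp_rank} the Shelah expansion $\mathcal{Z}^{Sh}$ is also dp-minimal, and it suffices to prove that $\mathcal{Z}^{Sh}$ is interdefinable with $\left(\Z,+,0,1,<\right)$, from which it follows (by \factref{no_intermediate_structures_between_the_group_and_the_order}, exactly as in the proof that \thmref{new-dp-minimal-infinite-set-of-natural-numbers} implies \thmref{dp-minimal-which-does-not-eliminate-exists-infty}) that $\mathcal{Z}$ itself is interdefinable with $\left(\Z,+,0,1,<\right)$. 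Working inside $\mathcal{Z}^{Sh}$ lets me assume quantifier elimination and, more importantly, lets me pass freely to definable subsets of $A$ that arise from external definitions.

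Next I would dispense with the syndetic case: if some infinite definable $A\subseteq\N$ is syndetic, then by \propref{syndetic_defines_the_order} the structure already defines $\N$, hence the order $<$, so $\mathcal{Z}$ is interdefinable with $\left(\Z,+,0,1,<\right)$ and we are done. So I may assume $A$ is \emph{not} syndetic; by replacing $A$ with $A\setminus F$ for a suitable finite $F$ if necessary I can arrange that $A$ is infinite and not syndetic, so that $L_A(A)=\infty$.

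With the non-syndetic case in hand, the heart of the argument is the dichotomy between bounded two-sided gaps and large doubling, each of which contradicts dp-minimality. Concretely: if every infinite definable subset of $A$ has bounded two-sided gaps, then by \propref{bounded_two_sided_gaps_gives_IP} the formula $y-x\in A$ has IP, contradicting NIP and hence dp-minimality. Otherwise there is an infinite definable $A'\subseteq A$ without bounded two-sided gaps; applying the remaining-case analysis to $A'$, if $A'$ has large lower-asymptotic doubling then \propref{large_k_tupling_gives_dp_rank_atleast_k} (with $k=2$) gives $\mbox{dp-rank}(\mathcal{Z})\ge2$, and if it does not, then \corref{dp_min_gives_either_large_doubling_or_bdd_two_sdd_gaps} shows $\mathcal{Z}$ is not dp-minimal. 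In every branch we reach a contradiction with dp-minimality except the syndetic one, which is precisely the branch that yields the desired conclusion.

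The main obstacle I anticipate is getting the logical bookkeeping of the case split exactly right, since the propositions are stated for various ambient hypotheses (some for $\mathcal{Z}=\left(\Z,+,0,1,A\right)$, some for arbitrary expansions, some requiring failure of elimination of $\exists^{\infty}$). In particular, \propref{bounded_two_sided_gaps_gives_IP} is phrased for the structure generated by $A$ alone, so I must check that IP of $y-x\in A$ in that reduct transfers to IP (and hence non-dp-minimality) in the richer structure $\mathcal{Z}^{Sh}$ — this is immediate since a reduct of a dp-minimal structure is dp-minimal, so IP in the reduct already contradicts dp-minimality of $\mathcal{Z}$. The other delicate point is ensuring that ``every infinite definable subset has bounded two-sided gaps'' versus ``some infinite definable subset does not'' genuinely exhausts the possibilities and that the subset $A'$ extracted in the second branch is itself an infinite definable subset of $\N$ to which the doubling analysis applies; here the passage to $\mathcal{Z}^{Sh}$ guarantees that the relevant subsets remain definable.
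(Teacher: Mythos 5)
Your case split is exactly the paper's: the syndetic case yields the order; otherwise either every infinite subset of $A$ definable in $\left(\Z,+,0,1,A\right)$ has bounded two-sided gaps, in which case \propref{bounded_two_sided_gaps_gives_IP} gives IP, or some infinite definable $A'\subseteq A$ does not, in which case \corref{dp_min_gives_either_large_doubling_or_bdd_two_sdd_gaps} (no large doubling) or \propref{large_k_tupling_gives_dp_rank_atleast_k} with $k=2$ (large doubling) contradicts dp-minimality. So the approach is the right one, but there is one genuine gap: in the syndetic branch you pass directly from ``$\mathcal{Z}$ defines $<$'' to ``$\mathcal{Z}$ is interdefinable with $\left(\Z,+,0,1,<\right)$.'' Defining the order only makes $\mathcal{Z}$ an \emph{expansion} of $\left(\Z,+,0,1,<\right)$; to conclude interdefinability you must invoke \factref{no_strongly_dependent_expansions_of_the_order} (Dolich--Goodrick: a strong, in particular dp-minimal, expansion of $\left(\Z,+,0,1,<\right)$ is interdefinable with it). This is the only branch in which the theorem's conclusion is actually produced rather than a contradiction, and the step you elide is precisely the deep external input there, so it must be cited explicitly.

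A secondary remark: the initial reduction to $\mathcal{Z}^{Sh}$ is unnecessary, since every subset of $A$ used in the argument is already definable in the reduct $\left(\Z,+,0,1,A\right)$ of $\mathcal{Z}$ --- no externally definable sets are ever needed. Worse, it creates an extra obligation you do not fully discharge: to pull interdefinability of $\mathcal{Z}^{Sh}$ with $\left(\Z,+,0,1,<\right)$ back to $\mathcal{Z}$ via \factref{no_intermediate_structures_between_the_group_and_the_order}, you must rule out that $\mathcal{Z}$ is interdefinable with $\left(\Z,+,0,1\right)$, and the argument you point to from the corollary uses failure of elimination of $\exists^{\infty}$, which is not among your hypotheses. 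The fix is easy (an infinite subset of $\N$ is not definable in $\left(\Z,+,0,1\right)$), but the cleanest repair is simply to drop the Shelah step and run the case analysis in $\mathcal{Z}$ itself, as the paper does.
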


\begin{proof}
Let $A\subseteq\mathbb{N}$ be an infinite set definable in $\mathcal{Z}$.
If $A$ is syndetic, then by \cref{prop:syndetic_defines_the_order},
$\mathcal{Z}$ defines the order $<$. Therefore, by \cref{fact:no_strongly_dependent_expansions_of_the_order},
$\mathcal{Z}$ is interdefinable with $\left(\mathbb{Z},+,0,1,<\right)$.
Suppose that $A$ is not syndetic. If every infinite subset of $A$
that is definable in $\left(\mathbb{Z},+,0,1,A\right)$ has bounded
two-sided gaps, then by \cref{prop:bounded_two_sided_gaps_gives_IP}
we get a contradiction. Otherwise, there exists an infinite subset
$A^{\prime}\subseteq A$ that is definable in $\left(\mathbb{Z},+,0,1,A\right)$
(and hence in $\mathcal{Z}$), and does not have bounded two-sided
gaps. If $A^{\prime}$ does not have large lower-asymptotic doubling,
then by \cref{cor:dp_min_gives_either_large_doubling_or_bdd_two_sdd_gaps}
we get a contradiction. Otherwise, by \cref{prop:large_k_tupling_gives_dp_rank_atleast_k}
we get a contradiction. This completes the proof.
\end{proof}

\subsection*{Acknowledgements:}

I would like to thank Hagai Lavner for many helpful conversations
about additive combinatorics, and my supervisor, Itay Kaplan, for
suggesting the proof of \cref{lem:ict_pattern_with_bounded_errors_is_enough},
and for providing helpful feedback throughout the development of this
paper. I would also like to thank the anonymous referee for their
numerous comments and suggestions. Especially I am grateful for \cref{cor:cor_of_both_thms_together},
and for \cref{conj:conj_on_fin_gen_abelian_groups} and the example
just before it.

\bibliographystyle{plainurl}
\bibliography{on_dp_minimal_expansions_of_the_integers}

\begin{thebibliography}{10}

\bibitem{Adl}
Hans Adler.
\newblock Strong theories, burden, and weight.
\newblock Unpublished draft.
\newblock URL: \url{https://www.semanticscholar.org/paper/Strong-theories%2C-burden%2C-and-weight-Adler/5bea2a89f9bd585008f2b47397fa06b719c71943}.

\bibitem{AD19}
Eran Alouf and Christian d'Elb\'{e}e.
\newblock A new dp-minimal expansion of the integers.
\newblock {\em J. Symb. Log.}, 84(2):632--663, 2019.
\newblock \href {https://doi.org/10.1017/jsl.2019.15} {\path{doi:10.1017/jsl.2019.15}}.

\bibitem{Aschenbrenner_et_al_II_2013}
Matthias Aschenbrenner, Alf Dolich, Deirdre Haskell, Dugald Macpherson, and Sergei Starchenko.
\newblock Vapnik-{C}hervonenkis density in some theories without the independence property, {II}.
\newblock {\em Notre Dame J. Form. Log.}, 54(3-4):311--363, 2013.
\newblock \href {https://doi.org/10.1215/00294527-2143862} {\path{doi:10.1215/00294527-2143862}}.

\bibitem{Aschenbrenner_et_al_I_2015}
Matthias Aschenbrenner, Alf Dolich, Deirdre Haskell, Dugald Macpherson, and Sergei Starchenko.
\newblock Vapnik-{C}hervonenkis density in some theories without the independence property, {I}.
\newblock {\em Trans. Amer. Math. Soc.}, 368(8):5889--5949, 2016.
\newblock \href {https://doi.org/10.1090/tran/6659} {\path{doi:10.1090/tran/6659}}.

\bibitem{BT17}
Neer Bhardwaj and Minh~Chieu Tran.
\newblock The additive groups of $\mathbb{Z}$ and $\mathbb{Q}$ with predicates for being square-free.
\newblock {\em arXiv:1707.00096 [math.LO]}, 2017.
\newblock \href {https://arxiv.org/abs/http://arxiv.org/abs/1707.00096v2} {\path{arXiv:http://arxiv.org/abs/1707.00096v2}}.

\bibitem{CS15}
Artem Chernikov and Pierre Simon.
\newblock Externally definable sets and dependent pairs {II}.
\newblock {\em Trans. Amer. Math. Soc.}, 367(7):5217--5235, 2015.
\newblock \href {https://doi.org/10.1090/S0002-9947-2015-06210-2} {\path{doi:10.1090/S0002-9947-2015-06210-2}}.

\bibitem{Cla20}
Tim Clausen.
\newblock Dp-minimal profinite groups and valuations on the integers.
\newblock {\em arXiv:2008.08797 [math.LO]}, 2020.
\newblock \href {https://arxiv.org/abs/http://arxiv.org/abs/2008.08797v1} {\path{arXiv:http://arxiv.org/abs/2008.08797v1}}.

\bibitem{CH11}
Raf Cluckers and Immanuel Halupczok.
\newblock Quantifier elimination in ordered abelian groups.
\newblock {\em Confluentes Math.}, 3(4):587--615, 2011.
\newblock \href {https://doi.org/10.1142/S1793744211000473} {\path{doi:10.1142/S1793744211000473}}.

\bibitem{Conant2018_no_intermediate}
Gabriel Conant.
\newblock There are no intermediate structures between the group of integers and {P}resburger arithmetic.
\newblock {\em J. Symb. Log.}, 83(1):187--207, 2018.
\newblock \href {https://doi.org/10.1017/jsl.2017.62} {\path{doi:10.1017/jsl.2017.62}}.

\bibitem{Conant2019_stability_sparsity}
Gabriel Conant.
\newblock Stability and sparsity in sets of natural numbers.
\newblock {\em Israel J. Math.}, 230(1):471--508, 2019.
\newblock \href {https://doi.org/10.1007/s11856-019-1835-0} {\path{doi:10.1007/s11856-019-1835-0}}.

\bibitem{ConantPillay2018}
Gabriel Conant and Anand Pillay.
\newblock Stable groups and expansions of {$(\Bbb Z,+,0)$}.
\newblock {\em Fund. Math.}, 242(3):267--279, 2018.
\newblock \href {https://doi.org/10.4064/fm403-8-2017} {\path{doi:10.4064/fm403-8-2017}}.

\bibitem{DolichGoodrick2017}
Alfred Dolich and John Goodrick.
\newblock Strong theories of ordered {A}belian groups.
\newblock {\em Fund. Math.}, 236(3):269--296, 2017.
\newblock \href {https://doi.org/10.4064/fm256-5-2016} {\path{doi:10.4064/fm256-5-2016}}.

\bibitem{EK19}
Shlomo Eshel and Itay Kaplan.
\newblock On uniform definability of types over finite sets for {NIP} formulas.
\newblock {\em arXiv:1904.10336 [math.LO]}, 2019.
\newblock \href {https://arxiv.org/abs/http://arxiv.org/abs/1904.10336v2} {\path{arXiv:http://arxiv.org/abs/1904.10336v2}}.

\bibitem{Fuer91}
Zolt\'{a}n F\"{u}redi.
\newblock Tur\'{a}n type problems.
\newblock In {\em Surveys in combinatorics, 1991 ({G}uildford, 1991)}, volume 166 of {\em London Math. Soc. Lecture Note Ser.}, pages 253--300. Cambridge Univ. Press, Cambridge, 1991.

\bibitem{Gui12}
Vincent Guingona.
\newblock On uniform definability of types over finite sets.
\newblock {\em J. Symbolic Logic}, 77(2):499--514, 2012.
\newblock \href {https://doi.org/10.2178/jsl/1333566634} {\path{doi:10.2178/jsl/1333566634}}.

\bibitem{JSW17}
Franziska Jahnke, Pierre Simon, and Erik Walsberg.
\newblock Dp-minimal valued fields.
\newblock {\em J. Symb. Log.}, 82(1):151--165, 2017.
\newblock \href {https://doi.org/10.1017/jsl.2016.15} {\path{doi:10.1017/jsl.2016.15}}.

\bibitem{Joh15}
Will Johnson.
\newblock On dp-minimal fields.
\newblock {\em arXiv:1507.02745 [math.LO]}, 2015.
\newblock \href {https://arxiv.org/abs/http://arxiv.org/abs/1507.02745v1} {\path{arXiv:http://arxiv.org/abs/1507.02745v1}}.

\bibitem{Joh20}
Will Johnson.
\newblock Dp-finite fields {VI}: the dp-finite {S}helah conjecture.
\newblock {\em arXiv:2005.13989 [math.LO]}, 2020.
\newblock \href {https://arxiv.org/abs/http://arxiv.org/abs/2005.13989v1} {\path{arXiv:http://arxiv.org/abs/2005.13989v1}}.

\bibitem{KS16}
Itay Kaplan and Saharon Shelah.
\newblock Decidability and classification of the theory of integers with primes.
\newblock {\em J. Symb. Log.}, 82(3):1041--1050, 2017.
\newblock \href {https://doi.org/10.1017/jsl.2017.16} {\path{doi:10.1017/jsl.2017.16}}.

\bibitem{LambottePoint2017}
Quentin Lambotte and Fran\c{c}oise Point.
\newblock On expansions of {$({\bf Z},+,0)$}.
\newblock {\em Ann. Pure Appl. Logic}, 171(8):102809, 36, 2020.
\newblock \href {https://doi.org/10.1016/j.apal.2020.102809} {\path{doi:10.1016/j.apal.2020.102809}}.

\bibitem{MS04}
Rahim Moosa and Thomas Scanlon.
\newblock {$F$}-structures and integral points on semiabelian varieties over finite fields.
\newblock {\em Amer. J. Math.}, 126(3):473--522, 2004.
\newblock URL: \url{http://muse.jhu.edu/journals/american_journal_of_mathematics/v126/126.3moosa.pdf}.

\bibitem{OU11}
Alf Onshuus and Alexander Usvyatsov.
\newblock On dp-minimality, strong dependence and weight.
\newblock {\em J. Symbolic Logic}, 76(3):737--758, 2011.
\newblock \href {https://doi.org/10.2178/jsl/1309952519} {\path{doi:10.2178/jsl/1309952519}}.

\bibitem{PS18}
Daniel Palac\'{\i}n and Rizos Sklinos.
\newblock On superstable expansions of free {A}belian groups.
\newblock {\em Notre Dame J. Form. Log.}, 59(2):157--169, 2018.
\newblock \href {https://doi.org/10.1215/00294527-2017-0023} {\path{doi:10.1215/00294527-2017-0023}}.

\bibitem{Poizat_2014}
Bruno Poizat.
\newblock Superg\'{e}n\'{e}rix.
\newblock {\em J. Algebra}, 404:240--270, 2014.
\newblock \`A la m\'{e}moire d'\'{E}ric Jaligot. [In memoriam \'{E}ric Jaligot].
\newblock \href {https://doi.org/10.1016/j.jalgebra.2014.01.026} {\path{doi:10.1016/j.jalgebra.2014.01.026}}.

\bibitem{Sim11}
Pierre Simon.
\newblock On dp-minimal ordered structures.
\newblock {\em J. Symbolic Logic}, 76(2):448--460, 2011.
\newblock \href {https://doi.org/10.2178/jsl/1305810758} {\path{doi:10.2178/jsl/1305810758}}.

\bibitem{Simon_2015}
Pierre Simon.
\newblock {\em A guide to {NIP} theories}, volume~44 of {\em Lecture Notes in Logic}.
\newblock Association for Symbolic Logic, Chicago, IL; Cambridge Scientific Publishers, Cambridge, 2015.
\newblock \href {https://doi.org/10.1017/CBO9781107415133} {\path{doi:10.1017/CBO9781107415133}}.

\bibitem{SW19}
Pierre Simon and Erik Walsberg.
\newblock Dp and other minimalities.
\newblock {\em arXiv:1909.05399 [math.LO]}, 2019.
\newblock \href {https://arxiv.org/abs/http://arxiv.org/abs/1909.05399v2} {\path{arXiv:http://arxiv.org/abs/1909.05399v2}}.

\bibitem{SW19a}
Pierre Simon and Erik Walsberg.
\newblock Tame topology over dp-minimal structures.
\newblock {\em Notre Dame J. Form. Log.}, 60(1):61--76, 2019.
\newblock \href {https://doi.org/10.1215/00294527-2018-0019} {\path{doi:10.1215/00294527-2018-0019}}.

\bibitem{TW17}
Minh~Chieu Tran and Erik Walsberg.
\newblock A family of dp-minimal expansions of $(\mathbb{Z};+)$.
\newblock {\em arXiv:1711.04390 [math.LO]}, 2017.
\newblock \href {https://arxiv.org/abs/http://arxiv.org/abs/1711.04390v1} {\path{arXiv:http://arxiv.org/abs/1711.04390v1}}.

\end{thebibliography}

\end{document}